\newtheorem{thm}{Theorem}[section]
\newtheorem{lem}[thm]{Lemma}
\newtheorem{prop}[thm]{Proposition}
\newtheorem{cor}[thm]{Corollary}
\theoremstyle{definition}
\newtheorem{dfn}[thm]{Definition}
\newtheorem{eg}[thm]{Example}
\newtheorem{rmk}[thm]{Remark}
\theoremstyle{remark}
\newtheorem*{ac}{Acknowlegments}
\newtheorem*{conv}{Convention}
\numberwithin{equation}{thm}
\def\add{\operatorname{add}}
\def\ann{\operatorname{ann}}
\def\Ass{\operatorname{Ass}}
\def\Assh{\operatorname{Assh}}
\def\C{\mathcal{C}}
\def\ca{\operatorname{ca}}
\def\cone{\operatorname{cone}}
\def\D{\mathcal{D}}
\def\Db{\operatorname{D^b}}
\def\depth{\operatorname{depth}}
\def\dim{\operatorname{dim}}
\def\Dsg{\operatorname{D_{sg}}}
\def\Ext{\operatorname{Ext}}
\def\End{\operatorname{End}}
\def\filt{\operatorname{filt}}
\def\grade{\operatorname{grade}}
\def\ge{\geqslant}
\def\height{\operatorname{ht}}
\def\h{\mathrm{H}}
\def\Hom{\operatorname{Hom}}
\def\inf{\operatorname{inf}}
\def\jac{\operatorname{jac}}
\def\k{\mathrm{K}}
\def\le{\leqslant}
\def\m{\mathfrak{m}}
\def\n{\mathfrak{n}}
\def\min{\operatorname{min}}
\def\ipd{\operatorname{IPD}}
\def\Min{\operatorname{Min}}
\def\mod{\operatorname{mod}}
\def\n{\mathfrak{n}}
\def\NF{\mathrm{NF}}
\def\nil{\operatorname{nil}}
\def\sw{\operatorname{SW}}
\def\p{\mathfrak{p}}
\def\pd{\operatorname{pd}}
\def\q{\mathfrak{q}}
\def\r{\operatorname{r}}
\def\rad{\operatorname{rad}}
\def\radius{\operatorname{radius}}
\def\red{\operatorname{red}}
\def\res{\operatorname{res}}
\def\Rfd{\operatorname{Rfd}}
\def\S{\mathcal{S}}
\def\Sing{\operatorname{Sing}}
\def\soc{\operatorname{soc}}
\def\spec{\operatorname{Spec}}
\def\sup{\operatorname{sup}}
\def\syz{\Omega}
\def\T{\mathcal{T}}
\def\thick{\operatorname{thick}}
\def\V{\mathrm{V}}
\def\X{\mathcal{X}}
\def\A{\mathcal{A}}
\def\xx{\bm{x}}
\def\Y{\mathcal{Y}}
\def\Z{\mathcal{Z}}
\begin{document}
\allowdisplaybreaks
\title[Upper bounds for dimensions of singularity categories and their annihilators]{Upper bounds for dimensions of singularity categories and their annihilators}

\author[Souvik Dey]{Souvik Dey}
\address{Department of Algebra, Faculty of Mathematics and Physics, Charles University in Prague, Sokolovska´ 83, 186 75 Praha, Czech Republic}
\email{souvik.dey@matfyz.cuni.cz}
\urladdr{\url{https://orcid.org/0000-0001-8265-3301}}

\author{Yuki Mifune}
\address{Graduate School of Mathematics, Nagoya University, Furocho, Chikusaku, Nagoya 464-8602, Japan}
\email{yuki.mifune.c9@math.nagoya-u.ac.jp}
\thanks{2020 {\em Mathematics Subject Classification.} 13C60, 13D09, 18G80}
\thanks{{\em Key words and phrases.} annihilator of linear category, derived category, dimension of triangulated category, Spanier--Whitehead category, Jacobian ideal, singularity category, Verdier quotient}
\begin{abstract}
Let $R$ be a commutative noetherian ring.
Denote by $\mod R$ the category of finitely generated $R$-modules and by $\Db(R)$ the bounded derived category of $\mod R$. 
In this paper, we first investigate localizations and annihilators of Verdier quotients of $\Db(R)$. 
After that, we explore upper bounds for the dimension of the singularity category of $R$ and its (strong) generators.
We extend a theorem of Liu to the case where $R$ is neither an isolated singularity nor even a local ring. Some of our results are more generally stated in terms of Spanier--Whitehead category of a resolving subcategory. 
\end{abstract}
\maketitle
%\tableofcontents
%%%%%%%%%%%%%%%%%%%%%%%%%%%%%%%%%%%%%%%%%%%%%%%%%%%%%%%%%%%%
%%%%%%%%%%%%%%%%%%%%%%%%%%%%%%%%%%%%%%%%%%%%%%%%%%%%%%%%%%%%
\section{Introduction}
Let $R$ be a commutative noetherian ring.
Denote by $\mod R$ the category of finitely generated $R$-modules and by $\Db(R)$ the bounded derived category of $\mod R$. 

The notion of a localization of an $R$-linear triangulated category at a prime ideal of $R$ is introduced in \cite{BensonIyengarKrause} and studied in \cite{Liu, Matsui} for instance.
The notion of the singularity category $\Dsg(R)$ of $R$ has been introduced by Buchweitz \cite{B}, defined as the Verdier quotient of $\Db(R)$ by the thick closure of $R$.
On the other hand, the concept of the dimension of a triangulated category is introduced by Rouquier \cite{Rouquier}. 
Roughly speaking, this measures the number of extensions necessary to build a given triangulated category from a single object.

Seeking for upper bounds for the dimensions of singularity categories has been studied extensively.
When $R$ is a regular local ring, the singularity category is trivial.
Ballard, Favero, and Katzarkov \cite{BallardFaveroKatzarkov} found an upper bound when $R$ is a local hypersurface with an isolated singularity.
Extending this result, Dao and Takahashi \cite{DaoTakahashi2015} got an upper bound when $R$ is a Cohen--Macaulay local ring with an isolated singularity.
Moreover, Liu \cite{Liu} has extended the results of Dao and Takahashi to the non-Cohen--Macaulay case by using the {\em annihilator ideal} $\ann_{R}\Dsg(R)$ of the singularity category, which is an ideal of $R$ defined to be the set of elements of $R$ annihilating the endomorphism ring of every object of $\Dsg(R)$.
The precise statement of  Liu's theorem is as follows.
\begin{thm}[Liu]\label{***}
Let $(R,\m,k)$ be an equicharacteristic excellent local ring with an isolated singularity. Let $I$ be an $\m$-primary ideal of $R$ contained in $\ann_{R}\Dsg(R)$. Then 
$$
\Dsg(R)={\langle k\rangle}_{(\mu(I)-\depth R+1)\ell\ell(R/I)}.
$$
\end{thm}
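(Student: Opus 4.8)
The nontrivial content is the inclusion $\Dsg(R)\subseteq\langle k\rangle_{d}$ with $d:=(\mu(I)-\depth R+1)\,\ell\ell(R/I)$ (the reverse inclusion is automatic), so I must show that every object of $\Dsg(R)$ is built from $k$ in at most $d$ layers. The plan has three stages: first reduce an arbitrary object of $\Dsg(R)$ to a sufficiently high syzygy module $M$; then use $I\subseteq\ann_{R}\Dsg(R)$ to realise $M$ as a direct summand, in $\Dsg(R)$, of its Koszul complex on a minimal generating set of $I$; and finally bound the cost of that Koszul complex --- a bounded complex whose homology has finite length and is annihilated by $I$ --- in terms of the number of its nonzero homology modules and their Loewy length, transporting the estimate along the Verdier quotient $\Db(R)\to\Dsg(R)$.

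For the reduction: given $X\in\Dsg(R)=\Db(R)/\thick(R)$, take a finitely generated free resolution $P\xrightarrow{\sim}X$ in $\Db(R)$. For $n\gg0$ the stupid truncation $\sigma_{\le n}P$ (the subcomplex in homological degrees $\le n$) is a bounded complex of finitely generated free modules, hence lies in $\thick(R)$, so the triangle $\sigma_{\le n}P\to P\to\sigma_{\ge n+1}P\to\sigma_{\le n}P[1]$ gives $X\cong\sigma_{\ge n+1}P$ in $\Dsg(R)$; once $n$ exceeds the top degree in which $X$ has nonzero homology, $\sigma_{\ge n+1}P$ is a single module placed in degree $n+1$. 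Thus $X\cong M[j]$ in $\Dsg(R)$ for some $M\in\mod R$ and $j\in\Z$. Moreover a module is isomorphic in $\Dsg(R)$ to a shift of each of its syzygies, and a sufficiently high syzygy module has depth $\ge\depth R$ by the depth lemma, so we may assume $\depth M\ge\depth R$. Since $\langle k\rangle_{d}$ is closed under shifts, it suffices to put every such $M$ into $\langle k\rangle_{d}$.

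For the remaining two stages: fix a \emph{minimal} generating set $x_{1},\dots,x_{n}$ of $I$, so $n=\mu(I)$, and let $K=K(x_{1},\dots,x_{n};R)=K(x_{1};R)\otimes_{R}\cdots\otimes_{R}K(x_{n};R)$ be the Koszul complex, a bounded complex of free modules in degrees $0,\dots,n$, so that $M\otimes^{\mathbf{L}}_{R}K=K(x_{1},\dots,x_{n};M)$. For any object $Y$ of $\Dsg(R)$ and any $i$ we have $x_{i}\cdot\mathrm{id}_{Y}=0$ since $x_{i}\in I\subseteq\ann_{R}\Dsg(R)$, so the triangle $Y\xrightarrow{x_{i}}Y\to Y\otimes^{\mathbf{L}}_{R}K(x_{i};R)\to Y[1]$ splits and exhibits $Y$ as a direct summand of $Y\otimes^{\mathbf{L}}_{R}K(x_{i};R)$; applying this successively to $Y=M,\ M\otimes^{\mathbf{L}}_{R}K(x_{1};R),\ \dots$ shows that $M$ is a direct summand in $\Dsg(R)$ of $C:=M\otimes^{\mathbf{L}}_{R}K=K(x_{1},\dots,x_{n};M)$. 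Now in $\Db(R)$ the homology $\h_{i}(C)$ is the Koszul homology $\h_{i}(x_{1},\dots,x_{n};M)$, which is finitely generated and annihilated by $I$ (each $x_{j}$ acts null-homotopically on $K$), hence annihilated by $\m^{\ell\ell(R/I)}$; so $\h_{i}(C)\in\langle k\rangle_{\ell\ell(R/I)}$ via its finite $\m$-adic filtration with semisimple subquotients. Furthermore, by the vanishing of Koszul homology in top degrees, $\h_{i}(C)=0$ for $i>n-\grade(I,M)$, and $\grade(I,M)=\depth M\ge\depth R$ since $I$ is $\m$-primary. Hence $\h_{i}(C)$ is nonzero only for $0\le i\le\mu(I)-\depth R$; climbing the tower of canonical truncations $\tau_{\le i}C$ --- each a cone on a map from a shift of $\h_{i}(C)$ into $\tau_{\le i-1}C$ --- assembles $C$ out of its at most $\mu(I)-\depth R+1$ nonzero homology modules, each lying in $\langle k\rangle_{\ell\ell(R/I)}$, whence $C\in\langle k\rangle_{(\mu(I)-\depth R+1)\,\ell\ell(R/I)}=\langle k\rangle_{d}$ in $\Db(R)$. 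As the Verdier quotient functor $\Db(R)\to\Dsg(R)$ is triangulated it carries $\langle k\rangle_{d}$ into $\langle k\rangle_{d}$, and since $\langle k\rangle_{d}$ is closed under direct summands and $M$ is a summand of the image of $C$, we get $M\in\langle k\rangle_{d}$, so $\Dsg(R)=\langle k\rangle_{d}$.

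The crux, and the only delicate point, is pinning the amplitude constant down to $\mu(I)-\depth R+1$: from the shape of $K$ alone one only gets $\mu(I)+1$, and the gain of $\depth R$ requires combining the Koszul-homology vanishing theorem in the top $\depth R$ degrees with the prior reduction to a syzygy module of depth $\ge\depth R$; one must also be careful to use a minimal generating set, so that $\mu(I)$ rather than the cardinality of some larger generating set appears. The annihilator-to-summand step and the transfer along the Verdier quotient are formal. I would also note that the hypotheses ``equicharacteristic excellent with isolated singularity'' are used only to guarantee that $\ann_{R}\Dsg(R)$ is $\m$-primary, i.e.\ that an $\m$-primary $I\subseteq\ann_{R}\Dsg(R)$ exists at all; given such an $I$, the displayed identity needs neither hypothesis, which is precisely what makes the non-local, non-isolated-singularity extension pursued in this paper possible.
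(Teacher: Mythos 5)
Your proposal is correct and follows essentially the same route as the paper: reduce to a high syzygy module of depth at least $\depth R$, exhibit it as a direct summand of its Koszul complex on a minimal generating set of $I$ using $I\subseteq\ann_{R}\Dsg(R)$, bound the number of nonzero Koszul homology modules by $\mu(I)-\grade(I,M)+1\le\mu(I)-\depth R+1$ via depth sensitivity, and place each homology module in $\langle k\rangle_{\ell\ell(R/I)}$ through its $\m$-adic filtration. This is exactly the mechanism of the paper's Theorem \ref{mthm} specialized via Corollaries \ref{cor of dimSing 0} and \ref{thm of Liu}, and your closing observation that the local/excellent/isolated-singularity hypotheses serve only to guarantee the existence of an $\m$-primary $I\subseteq\ann_R\Dsg(R)$ is precisely the paper's point of departure.
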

%%%%%%%%%%%%%%%%%%%%%%%%%%%%%%%%%%%%%%%%%%%%%%%%%%%%%%%%%%%%

In this paper, we extend the framework for obtaining upper bounds for dimensions of singularity categories using computable values to the case of general commutative noetherian rings.
The main result of this paper is the following theorem, which immediately recovers Theorem \ref{***}.
\begin{thm}[\Cref{main thm}] \label{main thm int}

Let $R$ be a commutative noetherian ring and 
$I$ an ideal of $R$ such that $\Sing R\subseteq \V(I)$. Then, $\operatorname{D}_{\operatorname{sg}}(R)=\langle \filt_{\mod R}\{R/\p \mid \p\in \V(I)\} \rangle_{\mu(I)-\grade I+1}$.
If, moreover, $I\subseteq \ann_R \Dsg(R)$, then we have 
\begin{equation*} \label{eq in main thm int}
\Dsg(R) = 
{\langle \mod R/I 
\rangle}_{\mu(I)-\grade I+1}.
\end{equation*}
In particular, the following inequalities hold.
\begin{align}
\dim \Dsg(R) 
& \le (\radius_{\Dsg(R)} (\mod R/I) + 1)(\mu(I)-\grade I+1)-1 \label{ineq1 in main thm int} \\
& \le (\dim\Db(R/I) + 1)(\mu(I)-\grade I+1)-1. \nonumber
\end{align}

\end{thm}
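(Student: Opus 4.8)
The plan is to reduce the statement to a single finitely generated module and then realize that module, inside $\Dsg(R)$, as a direct summand of a Koszul complex built on generators of $I$ (or of a power of $I$), whose homology is transparently an iterated extension of modules supported on $\V(I)$.

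\emph{Step 1: reduction to a module and a syzygy trick.} I will use the standard fact that every object of $\Dsg(R)$ is isomorphic to $M[i]$ for some $M\in\mod R$ and $i\in\mathbb{Z}$ (brutally truncate a projective resolution and discard the bounded, hence perfect, part); thus $\Dsg(R)=\langle\mod R\rangle_1$ and it suffices to place each $M\in\mod R$ in the asserted subcategory. Fix a minimal generating set $x_1,\dots,x_n$ of $I$ with $n=\mu(I)$, put $g=\grade I$, and replace $M$ by $N:=\syz^g M$; this is legitimate because $\syz^g M\cong M[-g]$ in $\Dsg(R)$. The role of this replacement is that $\depth R_\p\ge g$ for every $\p\in\V(I)$, whence $\depth_{R_\p}N_\p\ge\min(g,\depth R_\p)=g$ since $N_\p$ is a $g$-th syzygy over $R_\p$; consequently the $I$-depth of $N$ is at least $g$, so the Koszul homology of any $n$-element system generating an ideal with the same radical as $I$, applied to $N$, vanishes in homological degrees $>n-g$.

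\emph{Step 2: a power of $I$ kills the identity (the crux).} Here I invoke the behaviour of $\Hom$-modules in Verdier localizations of $\Db(R)$ developed earlier in the paper: for $X\in\Db(R)$ and $\p\in\spec R$ one has $\Hom_{\Dsg(R)}(X,X)_\p\cong\Hom_{\Dsg(R_\p)}(X_\p,X_\p)$. Since $\V(I)\supseteq\Sing R$, the ring $R_\p$ is regular for $\p\notin\V(I)$, so $\Dsg(R_\p)=0$ and $\operatorname{id}_X$ dies in every such localization; hence $\sqrt{\Ann_R(\operatorname{id}_X)}\supseteq I$, and as $I$ is finitely generated, $I^{k}\cdot\operatorname{id}_X=0$ in $\Dsg(R)$ for $k\gg0$. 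Applying this successively, one can choose exponents $k_1\le\cdots\le k_n$ so large that for $Z_0=N$ and $Z_{j+1}=\cone(Z_j\xrightarrow{x_{j+1}^{k_{j+1}}}Z_j)$ the morphism $x_{j+1}^{k_{j+1}}\cdot\operatorname{id}_{Z_j}$ vanishes in $\Dsg(R)$; then each such cone splits, so $Z_n\cong\bigoplus_{j=0}^{n}N[j]^{\binom{n}{j}}$ in $\Dsg(R)$. But $Z_n$ is precisely the Koszul complex $K(x_1^{k_1},\dots,x_n^{k_n};N)$, so $N$ is a direct summand of it in $\Dsg(R)$.

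\emph{Step 3: identifying the Koszul complex, and the annihilator case.} The ideal $J=(x_1^{k_1},\dots,x_n^{k_n})$ has $\sqrt J=\sqrt I$, so $\V(J)=\V(I)$ and the $J$-depth of $N$ equals its $I$-depth, hence is $\ge g$; thus $K(x_1^{k_1},\dots,x_n^{k_n};N)$ has homology concentrated in the $n-g+1=\mu(I)-\grade I+1$ consecutive degrees $0,\dots,n-g$, and each homology module, being annihilated by $J$, lies in $\filt_{\mod R}\{R/\p\mid\p\in\V(I)\}$. Filtering this bounded complex by its soft truncations uses $n-g$ cones, so it lies in $\langle\filt_{\mod R}\{R/\p\mid\p\in\V(I)\}\rangle_{\mu(I)-\grade I+1}$ in $\Db(R)$, hence in $\Dsg(R)$; since $N$ is a summand of it and $M\cong N[g]$, the first assertion follows. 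When moreover $I\subseteq\Ann_R\Dsg(R)$, every identity is already killed by $I$ itself, so one takes exponent $1$ throughout, i.e. $J=I$: then $K(x_1,\dots,x_n;N)\cong\bigoplus_j N[j]^{\binom{n}{j}}$ in $\Dsg(R)$ and its Koszul homology is annihilated by $I$, hence lies in $\mod R/I$, giving $\Dsg(R)=\langle\mod R/I\rangle_{\mu(I)-\grade I+1}$.

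\emph{Step 4 and the main obstacle.} The displayed inequalities are then formal: generating $\mod R/I$ inside $\Dsg(R)$ from a single object in $\radius_{\Dsg(R)}(\mod R/I)+1$ steps and composing with the last identity yields $\dim\Dsg(R)\le(\radius_{\Dsg(R)}(\mod R/I)+1)(\mu(I)-\grade I+1)-1$, while restriction of scalars along $R\to R/I$ carries a strong generator of $\Db(R/I)$ to an object of $\Dsg(R)$ in whose $(\dim\Db(R/I)+1)$-st ball $\mod R/I$ lies, giving $\radius_{\Dsg(R)}(\mod R/I)\le\dim\Db(R/I)$. I expect Step 2 to be the hard part: making precise the localization formula for $\Hom$ in $\Dsg(R)$ and extracting from $\Sing R\subseteq\V(I)$ that a power of $I$ annihilates every identity morphism — this is exactly what should replace the isolated-singularity hypothesis in Liu's theorem. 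A secondary, more bookkeeping, point is verifying that the soft-truncation filtration of the Koszul complex costs exactly $\mu(I)-\grade I$ cones once the syzygy $\syz^g$ has trivialized its top Koszul homology.
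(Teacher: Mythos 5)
Your argument is correct, and its skeleton coincides with the paper's proof of \Cref{mthm}: pass to a syzygy, form the Koszul complex on powers of a minimal generating set of $I$, split the module off the resulting Koszul object in $\Dsg(R)$ because those powers annihilate its endomorphism ring, and count the cones via grade sensitivity together with a prime filtration of the Koszul homology (this last point, your ``secondary bookkeeping'' worry, is exactly \cite[Lemma 2.1(2)]{DaoTakahashi2015} as used in the paper). The one place you genuinely diverge is the crux you identify in Step 2. The paper obtains $J=(x_1^e,\dots,x_n^e)\subseteq\ann_R\End_{\Dsg(R)}(\syz^m M)$ from an external cohomology-annihilator theorem, which forces the choice $m\ge\Rfd_R M$; you instead derive it internally from the localization results of Section 3: the isomorphism $\End_{\Dsg(R)}(N)_{\p}\cong\End_{\Dsg(R_{\p})}(N_{\p})$ together with $\Dsg(R_{\p})=0$ for $\p\notin\V(I)\supseteq\Sing R$ gives $\V(\ann_R\End_{\Dsg(R)}(N))=\ipd(N)\subseteq\V(I)$ (this is \Cref{closedness}(2) with $\X=\{R\}$), and since $\ann_R(1_N)$ is the annihilator of a cyclic module one gets $I\subseteq\sqrt{\ann_R\End_{\Dsg(R)}(N)}$, hence $I^k\cdot 1_N=0$ for $k\gg 0$. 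This buys two things: the proof becomes self-contained relative to the paper's own Section 3 rather than the cited reference, and you can afford to take only $g=\grade I$ syzygies instead of $\Rfd_R M$, because you only need $\depth_{R_{\p}}N_{\p}\ge g$ at primes $\p\in\V(I)$, where $\depth R_{\p}\ge\grade I$ holds automatically. What the paper's formulation buys in exchange is the extra generality of \Cref{mthm} for Spanier--Whitehead categories of arbitrary resolving subcategories, where one additionally needs the Koszul homology to land in $\sw(\X)$; for the singularity category itself your shortcut is perfectly adequate and the remaining steps (splitting of the iterated cones, identification with the Koszul complex, and the two formal inequalities) match the paper's \Cref{end annihilators} and \Cref{rmk of dim rad}(7).
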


%%%%%%%%%%%%%%%%%%%%%%%%%%%%%%%%%%%%%%%%%%%%%%%%%%%%%%%%%%%%
Here, we shall explain the notation used in the above theorem in order from the top.
For a full subcategory $\X$ of $\mod R$, we denote by $\filt_{\mod R}\X$ the smallest full subcategory of $\mod R$ that contains $\X$ and is closed under extensions.
In the right-hand side of the equality for $\Dsg(R)$, we identify $\filt_{\mod R}\{R/\p \mid \p\in \V(I)\}$ and $\mod R/I$ with the full subcategory of $\Dsg(R)$ given as the essential image of the composition of the natural functors $\mod R/I \to \mod R \to \Db(R) \to \Dsg(R)$.
We denote by $\mu(I)$ the minimal number of generators of the ideal $I$.
Finally, in the inequality (\ref{ineq1 in main thm int}), 
$\radius_{\Dsg(R)} (\mod R/I)$ is the least number of extensions to build the full subcategory $\mod R/I$ of $\Dsg(R)$ from a single object in $\Dsg(R)$ (for the precise definition, see Definition \ref{def of dim}).
This notion is a generalization of the (Rouquier) dimension of a triangulated category and introduced in \cite{Oppermann}.

\Cref{main thm int} is a consequence of a more general result about Spanier--Whitehead categories which also generalizes \cite[Corollary 5.8(2)]{bah2}, see \Cref{mthm}.
It is not only a generalization of \Cref{***}, but also has several other applications. Here, let us explain some of them.

If $R$ is a J-$2$ ring, then $\Dsg(R)$ admits a classical generator by a theorem of Iyengar and Takahashi \cite{viet}. Our \Cref{main thm int} refines this statement by providing a sufficient condition for $\Dsg(R)$ to possess a classical generator.
More strongly, if $R$ is a quasi-excellent ring of finite Krull dimension, then the dimension of $\Db(R)$ is finite by a recent theorem of Aoki \cite{Aoki}, and in particular, so is the dimension of $\Dsg(R)$.
As a consequence of \Cref{main thm int}, we can refine the sufficient condition on the finiteness of the dimension of $\Dsg(R)$ as follows.
\begin{cor}[Corollaries \ref{fin of dim} and \ref{mincomp}]\label{fin of dim int}
Let $R$ be a commutative noetherian ring.
\begin{enumerate}[\rm(1)]
\item
Let $I$ be an ideal of $R$ such that $\Sing R\subseteq \V(I)$. If $\Dsg(R/\p)$ has a classical generator for all $\p \in \Min(R/I)$, then $\Dsg(R)$ has a classical generator. In particular, if $R/I$ is a J-$2$ ring, then $\Dsg(R)$ has a classical generator. 
\item
If there exists an ideal $I$ of $R$ contained in $\sqrt{\ann_{R}\Dsg(R)}$ such that $R/I$ is a quasi-excellent ring of finite Krull dimension.
Then the dimension of $\Dsg(R)$ is finite.
\end{enumerate}
\end{cor}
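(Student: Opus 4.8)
The plan is to deduce both parts from \Cref{main thm int}.

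For (1), I would start from the first assertion of \Cref{main thm int}: since $\Sing R\subseteq\V(I)$, we have $\Dsg(R)=\langle\filt_{\mod R}\{R/\p\mid\p\in\V(I)\}\rangle_{\mu(I)-\grade I+1}$, and taking thick closures this reads $\Dsg(R)=\thick_{\Dsg(R)}\{R/\p\mid\p\in\V(I)\}$. So it suffices to produce a classical generator of this thick subcategory. Write $\Min(R/I)=\{\q_1,\dots,\q_s\}$ (finite, as $R$ is noetherian) and set $\mathcal{N}:=\thick_{\Dsg(R)}\{R/\q_1,\dots,R/\q_s\}$, which has classical generator $\bigoplus_i R/\q_i$. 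For each $i$, restriction of scalars along $R\to R/\q_i$ followed by the canonical functors gives an exact functor $\Db(R/\q_i)\to\Db(R)\to\Dsg(R)\to\Dsg(R)/\mathcal{N}=:\mathcal{U}$; it carries $R/\q_i$ to $0$ (its image in $\Dsg(R)$ lies in $\mathcal{N}$), hence kills $\thick_{\Db(R/\q_i)}(R/\q_i)=\operatorname{Perf}(R/\q_i)$, so it factors through a functor $\overline{F}_i\colon\Dsg(R/\q_i)\to\mathcal{U}$.

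Then comes the d\'evissage. Fix $\p\in\V(I)$ and choose $i$ with $\q_i\subseteq\p$; then $R/\p$ is an $R/\q_i$-module, so its class in $\Dsg(R/\q_i)$ lies in $\thick(G_i)$, where $\Dsg(R/\q_i)=\thick(G_i)$ by hypothesis, and applying $\overline{F}_i$ shows that the image of $R/\p$ in $\mathcal{U}$ lies in $\thick_{\mathcal{U}}(\overline{F}_i(G_i))$. Since the classes of all $R/\p$ with $\p\in\V(I)$ generate $\mathcal{U}$ as a thick subcategory, $\mathcal{U}=\thick_{\mathcal{U}}\{\overline{F}_1(G_1),\dots,\overline{F}_s(G_s)\}$ has a classical generator. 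As both $\mathcal{N}$ and $\mathcal{U}=\Dsg(R)/\mathcal{N}$ have classical generators, so does $\Dsg(R)$ — a classical generator of $\mathcal{N}$ together with a lift of one for $\mathcal{U}$ works, by a routine computation with roofs in the Verdier quotient. For the last sentence of (1): if $R/I$ is J-$2$, each $R/\q_i$ is a finite-type $R/I$-algebra, hence J-$2$, hence $\Dsg(R/\q_i)$ has a classical generator by Iyengar--Takahashi \cite{viet}, and the above applies.

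For (2), I would first note $\Sing R\subseteq\V(\ann_R\Dsg(R))$: if $\ann_R\Dsg(R)\not\subseteq\p$, an element $s\in\ann_R\Dsg(R)\setminus\p$ becomes a unit of $R_\p$ lying in $\ann_{R_\p}\Dsg(R_\p)$ (annihilators being compatible with localization), forcing $\Dsg(R_\p)=0$ and $R_\p$ regular; hence $\Sing R\subseteq\V(\ann_R\Dsg(R))\subseteq\V(I)$ since $I\subseteq\sqrt{\ann_R\Dsg(R)}$. As $R$ is noetherian, $\sqrt{\ann_R\Dsg(R)}$ is finitely generated, so $I^N\subseteq\ann_R\Dsg(R)$ for some $N$, while still $\V(I^N)=\V(I)\supseteq\Sing R$. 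Applying the ``moreover'' part of \Cref{main thm int} to $I^N$ gives
\[
\dim\Dsg(R)\le\bigl(\dim\Db(R/I^N)+1\bigr)\bigl(\mu(I^N)-\grade I^N+1\bigr)-1,
\]
whose second factor is finite, so it remains to bound $\dim\Db(R/I^N)$. By Aoki's theorem \cite{Aoki}, $\dim\Db(R/I)<\infty$, because $R/I$ is quasi-excellent of finite Krull dimension. The surjection $R/I^N\to R/I$ has kernel $\mathfrak{a}:=I/I^N$ with $\mathfrak{a}^N=0$, and for $X\in\Db(R/I^N)$ the degreewise filtration $0=\mathfrak{a}^N X\subseteq\cdots\subseteq\mathfrak{a}X\subseteq X$ builds $X$ in $N$ steps out of complexes of $R/I$-modules; as each such complex, viewed over $R/I$, lies in $\langle G\rangle_{d+1}$ for a strong generator $G$ of $\Db(R/I)$ with $d:=\dim\Db(R/I)$, restriction along $R/I^N\to R/I$ shows $X\in\langle G\rangle_{N(d+1)}$ in $\Db(R/I^N)$, whence $\dim\Db(R/I^N)\le N(d+1)-1<\infty$.

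The main obstacle I anticipate is in (1): restriction of scalars $\Db(R/\q_i)\to\Db(R)$ does \emph{not} descend to singularity categories (as $R/\q_i$ is not a perfect $R$-complex), so one cannot compare $\Dsg(R/\q_i)$ with $\Dsg(R)$ directly; the device of first quotienting by $\mathcal{N}$ is what circumvents this, and the points requiring care are that the induced functor $\overline{F}_i$ is well defined and that the gluing of classical generators across the Verdier quotient $\Dsg(R)\to\Dsg(R)/\mathcal{N}$ is carried out correctly.
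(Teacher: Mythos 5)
Your proof is correct, but part (1) takes a genuinely different route from the paper's. For (1), the paper first upgrades the hypothesis to a classical generator of $\Db(R/\q)$ for each $\q\in\Min(R/I)$, then cites a d\'evissage theorem of Krause--Stevenson together with a lemma of Iyengar--Takahashi to produce a classical generator $G$ of $\mod R/I$, and finally feeds $G$ into the main theorem via $\Dsg(R)=\langle\filt_{\mod R}\{R/\p\mid\p\in\V(I)\}\rangle_{\mu(I)-\grade I+1}$. You instead perform the d\'evissage inside $\Dsg(R)$ itself: quotient by $\mathcal{N}=\thick\{R/\q_i\}$, note that restriction of scalars induces functors $\overline{F}_i\colon\Dsg(R/\q_i)\to\Dsg(R)/\mathcal{N}$, push the generators $G_i$ forward to generate the quotient, and glue. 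This is self-contained (it essentially reproves the Krause--Stevenson-type reduction in the special case needed) and correctly circumvents the failure of restriction of scalars to descend to singularity categories; the two delicate points you flag --- well-definedness of $\overline{F}_i$ and the gluing of generators across $\Dsg(R)\to\Dsg(R)/\mathcal{N}$ --- are both standard (the universal property of Verdier quotients, and the correspondence between thick subcategories of $\mathcal{T}/\mathcal{N}$ and thick subcategories of $\mathcal{T}$ containing $\mathcal{N}$), so there is no gap. What the paper's route buys is a more quantitative conclusion (an explicit ball of controlled radius around an explicit generator); what yours buys is independence from the external generation result for $\mod R/I$. Part (2) follows the paper's own argument almost exactly --- pass to $I^N\subseteq\ann_R\Dsg(R)$, apply the ``moreover'' clause of the main theorem, and invoke Aoki --- the only cosmetic difference being that you absorb the power $N$ into the bound $\dim\Db(R/I^N)\le N(\dim\Db(R/I)+1)-1$ via the $(I/I^N)$-adic filtration, whereas the paper absorbs it inside $\Dsg(R)$ via $\mod R/I^N\subseteq\langle\mod R/I\rangle_N$; both are instances of the same filtration argument.
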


Regarding the hypotheses of both \Cref{main thm int} and \Cref{fin of dim int}, it is to be mentioned that the condition $\Sing R\subseteq \V(I)$ is weaker than $I\subseteq \sqrt{\ann_R \Dsg(R)}$ (see \Cref{ipd}(4)). 

%%%%%%%%%%%%%%%%%%%%%%%%%%%%%%%%%%%%%%%%%%%%%%%%%%%%%%%%%%%%

On the other hand, it may not be easy to calculate the ideal $\ann_{R}\Dsg(R)$. 
Thus, we seek to obtain computable but non-trivial ideals contained in $\ann_{R}\Dsg(R)$.
As another consequence of \Cref{main thm int}, we can show that, under mild assumptions, the {\em Jacobian ideal} of $R$, denoted by $\jac(R)$, is such an ideal.
\begin{cor}[\Cref{for calculation}]\label{for calculation int}
Let $R$ be a finitely generated algebra over a field or an equicharacteristic complete noetherian local ring of dimension $d$. 
If $R$ is equidimensional and half Cohen--Macaulay, then one has
\begin{center}
$\Dsg(R) = 
{\langle \mod (R/\jac(R))
\rangle}_{\mu(\jac(R))-\grade (\jac(R))+1}$.
\end{center}
In particular, we have the following inequalities.
\begin{align*}
\dim \Dsg(R)
& \leq (\radius_{\Dsg(R)} (\mod (R/\jac(R))) + 1)(\mu(\jac(R))-\grade (\jac(R))+1)-1 \\
& \leq (\dim\Db(R/\jac(R)) + 1)(\mu(\jac(R))-\grade (\jac(R))+1)-1.
\end{align*}
\end{cor}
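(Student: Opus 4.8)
All the assertions follow from \Cref{main thm int} applied to the ideal $I=\jac(R)$, so the whole argument reduces to checking the two hypotheses of \Cref{main thm int} for this choice: (i) $\Sing R\subseteq\V(\jac(R))$ and (ii) $\jac(R)\subseteq\ann_R\Dsg(R)$. I would first observe that (ii) implies (i): localization is compatible with the formation of singularity categories and with annihilators, so $(\ann_R\Dsg(R))_\p\subseteq\ann_{R_\p}\Dsg(R_\p)\subsetneq R_\p$ whenever $R_\p$ is not regular, whence $\Sing R\subseteq\V(\ann_R\Dsg(R))\subseteq\V(\jac(R))$. Granting (ii), the displayed equality is the ``moreover'' clause of \Cref{main thm int} for $I=\jac(R)$, and the two ensuing inequalities are the inequalities of \Cref{main thm int} specialized to $I=\jac(R)$; nothing else is needed. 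Hence everything comes down to the single containment $\jac(R)\subseteq\ann_R\Dsg(R)$.

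To prove (ii) I would route it through the \emph{cohomology annihilator ideal} $\ca(R)=\{\,r\in R\mid r\cdot\Ext^i_R(M,N)=0\text{ for all }M,N\in\mod R\text{ and }i\gg0\,\}$, and use two inclusions. First, the standard inclusion $\ca(R)\subseteq\ann_R\Dsg(R)$: every object of $\Dsg(R)$ is, up to shift, a (high syzygy) module, and $\Hom_{\Dsg(R)}(M,N[i])\cong\Ext^i_R(M,N)$ for $i\gg0$, so an element killing all sufficiently high $\Ext$ groups kills every morphism of $\Dsg(R)$. Second, the inclusion $\jac(R)\subseteq\ca(R)$. Together these give (ii), reducing the problem to $\jac(R)\subseteq\ca(R)$.

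This last inclusion is the crux, and I expect it to be the main obstacle; it is exactly where ``equidimensional'' and ``half Cohen--Macaulay'' are used. I would argue along the lines of Iyengar--Takahashi's work on the annihilation of cohomology: fix a presentation $R=S/\mathfrak a$ with $S$ a polynomial ring over the base field (or a power series ring over a coefficient field in the complete local case), so that the Jacobian criterion is available; equidimensionality keeps the heights of the relevant determinantal ideals under control, so that $R_\p$ is regular for every prime $\p$ with $\jac(R)\not\subseteq\p$, i.e.\ $\V(\jac(R))$ equals the singular locus. A desingularization/Andr\'e--Quillen argument then yields a uniform exponent $t$ with $\jac(R)^t\subseteq\ca(R)$, where $t$ is governed by the Cohen--Macaulay defect $\sup_\p\bigl(\dim R_\p-\depth R_\p\bigr)$ of $R$. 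The half-Cohen--Macaulay hypothesis is what is designed to bring this exponent down to $t=1$, upgrading the a priori inclusion $\jac(R)\subseteq\sqrt{\ca(R)}$ to the honest containment $\jac(R)\subseteq\ca(R)$ required to invoke the ``moreover'' part of \Cref{main thm int}. The delicate point is tracking the exponent through the desingularization and confirming that ``half Cohen--Macaulay'' genuinely forces $t=1$; the rest is routine.
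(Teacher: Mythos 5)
Your proposal is correct and follows the paper's route exactly: deduce everything from \Cref{main thm} with $I=\jac(R)$, the only substantive point being the chain $\jac(R)\subseteq\ca(R)\subseteq\ann_R\Dsg(R)$. The inclusion $\jac(R)\subseteq\ca(R)$ that you flag as the delicate crux is not reproved in the paper either; it is precisely the theorem of Iyengar and Takahashi cited in \Cref{ideals in annihilator}(3) (giving $\jac(R)\subseteq\ca^{d+1}(R)$ for equidimensional half Cohen--Macaulay rings of the stated form), so your argument is complete once that result is invoked as a black box rather than re-derived.
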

%%%%%%%%%%%%%%%%%%%%%%%%%%%%%%%%%%%%%%%%%%%%%%%%%%%%%%%%%%%%
Here, a commutative noetherian ring $R$ is said to be {\em half Cohen--Macaulay} if the inequality 
$\dim R_{\p} \le 2\depth R_{\p}$
holds for all prime ideals $\p$ of $R$.
The above result allows us to compute upper bounds for dimensions of singularity categories for a large family of rings, involving the case where $R$ is neither an isolated singularity nor even a local ring.

We also provide some applications and examples by using \Cref{for calculation int}; see Corollaries \ref{cor of dimSing 0}, \ref{cor dimSing 1}, \ref{contable CM} and Examples \ref{dim 41}, \ref{eg dimSing 1}, \ref{uncountable CM}.
In particular, we give an explicit generator and an upper bound for the dimension of the singularity category when the dimension of the singular locus is at most one.

The organization of this paper is as follows.
In section $2$, we state the definitions and basic properties of syzygies of modules, radii of full subcategories of triangulated categories, and $R$-linear triangulated categories for later use.
In section $3$, 
we explore localizations  of Verdier quotients of $R$-linear triangulated categories, and investigate annihilators of Verdier quotients of $\Db(R)$.
In section $4$,
we establish the main result of this paper and deduce many corollaries and their application examples.
%%%%%%%%%%%%%%%%%%%%%%%%%%%%%%%%%%%%%%%%%%%%%%%%%%%%%%%%%%%%
\begin{conv}
Throughout this paper, all rings are commutative noetherian rings and all subcategories are full and strict, that is, closed under isomorphism. 
Let $R$ be a commutative noetherian ring.
Denote by $\mod R$ the category of finitely generated $R$-modules, by $\Db(R)$ the bounded derived category of $\mod R$, and by $\Dsg(R)$ the singularity category of $R$.
If $R$ is local, then denote by $\m$ the maximal ideal of $R$ and by $k$ the residue field of $R$.
We write the variables of a polynomial ring or a formal power series ring in capital letters, and denote their images in quotient rings by the corresponding lowercase letters.
\end{conv}
%%%%%%%%%%%%%%%%%%%%%%%%%%%%%%%%%%%%%%%%%%%%%%%%%%%%%%%%%%%%

%%%%%%%%%%%%%%%%%%%%%%%%%%%%%%%%%%%%%%%%%%%%%%%%%%%%%%%%%%%%
\section{Preliminaries}
In this section, we recall various definitions and basic properties needed later. We begin with the notion of syzygies of modules.
%%%%%%%%%%%%%%%%%%%%%%%%%%%%%%%%%%%%%%%%%%%%%%%%%%%%%%%%%%%%
\begin{dfn} \label{def of syz}
Let $M$ be a finitely generated $R$-module.
Take a projective resolution $\cdots \xrightarrow{\delta_{n+1}} P_n \xrightarrow{\delta_n} P_{n-1} \xrightarrow{\delta_{n-1}}\cdots \xrightarrow{\delta_1}P_0 \xrightarrow{\delta_0} M \to 0$ of $M$ with $P_{i} \in \mod R$ for all $i\ge 0$. Then for each $n\ge 0$, the image of $\delta_n$ is called the {\em $n$-th syzygy of $M$} and denoted by $\Omega_R ^n M$. 
Note that $\Omega_R ^n M$ is uniquely defined up to projective summands.
However if $R$ is a local ring, we use {\em minimal free resolution} to define $\Omega_R ^n M$. Hence this is uniquely determined up to isomorphism.
\end{dfn}
%%%%%%%%%%%%%%%%%%%%%%%%%%%%%%%%%%%%%%%%%%%%%%%%%%%%%%%%%%%%
\begin{rmk} \label{syz and localization}
Let $R$ be a local ring and
$M$ a finitely generated $R$-module and
$n$ a nonnegative integer. 
Then for all prime ideals $\p$ of $R$, one has
$(\Omega_R ^n M)_{\p}
\cong
\Omega_{R_{\p}} ^n M_{\p} \oplus R^{{\oplus}m}_{\p}$ 
for some $m\ge 0$; see \cite[Proposition 1.1.2]{Avramov}.
\end{rmk}
%%%%%%%%%%%%%%%%%%%%%%%%%%%%%%%%%%%%%%%%%%%%%%%%%%%%%%%%%%%%
Next, we recall the notion of the radius of a subcategory of a triangulated category.
This is a generalization of the concept of the dimensions of triangulated categories and introduced in \cite{Oppermann}.
The notation for radius is triangulated analog of the radius of a subcategory of an abelian category introduced by Dao and Takahashi \cite{DaoTakahashiradius}.
Let us begin with recalling the notion of additive closure in an additive category and thick subcategory in a triangulated category.
\begin{dfn} \label{def of dim}
Let $\T$ be an essentially small triangulated category with shift functor $\Sigma$ 
and $\X , \Y$ be subcategories of $\T$.
\begin{enumerate}[\rm(1)]
\item
Let $\mathcal{C}$ be an additive category 
and $\X$ a subcategory of $\mathcal{C}$.
Denote by $\add_{\mathcal{C}} \X$ the {\em additive closure of $\X$}, namely, the subcategory of $\mathcal{C}$ consisting of direct summands of finite direct sums of objects in $\X$. When $\X$ consists of a single object $X$, we simply denote $\add_{\mathcal{C}} \{ X\}$ by $\add_{\mathcal{C}} X$.
\item
A subcategory $\X$ of $\T$ is {\em thick} if 
$\X$ is closed under shifts, cones, and direct summands.
For a subcategory $\X$ of $\T$, we denote by $\thick_{\T} \X$ the {\em thick closure} of $\X$ in $\T$, which is defined as the smallest thick subcategory of $\T$ containing $\X$. We say $\T$ has a {\em classical generator} if $\T=\thick_{\T}(G)$ for some $G\in \T$. 
\item 
We denote by 
$\langle \X \rangle$
the additive closure of the subcategory of $\T$ consisting of objects of the form 
$\Sigma ^{i} X$, where $i \in \mathbb{Z}$ and $X \in \X$.
If $\X$ consists of a single object $X$, we simply denote $\langle \{X\} \rangle$ by $\langle X \rangle$. 
\item
We denote by $\X \ast \Y$ the subcategory of $\T$ consisting of objects $Z$ such that there exists an exact triangle 
$X \to Z \to Y \to \Sigma X$ with $X\in \X$ and $Y\in \Y$. 
We set 
$\X \diamond \Y = \langle \langle \X \rangle \ast \langle \Y \rangle \rangle$. 
\item
For a nonnegative integer $r$, we inductively define the {\em ball of radius $r$ centered at $\X$} as follows.
\begin{equation}
{\langle \X \rangle}_{r} = \nonumber
\begin{cases}
0 & \text{if $r=0$,} \\
\langle \X \rangle & \text{if $r=1$,} \\
{\langle \X \rangle}_{r-1} \diamond \langle \X \rangle & \text{if $r>1$.}
\end{cases}
\end{equation}
If $\X$ consists of a single object $X$, we simply denote ${\langle \{X\} \rangle}_{r}$ by ${\langle X \rangle}_{r}$, and call it {\em the ball of radius $r$ centered at $X$}.
We write ${\langle \X \rangle}_{r} ^{\T}$ when we should specify that $\T$ is the underlying category where the ball is defined.
\item
We define the {\em radius} of $\X$ in $\T$, denoted by $\radius_{\T} \X$, as the infimum of the nonnegative integers $n$ such that there exists a ball of radius $n+1$ centered at an object containing $\X$.
In other words, 
\begin{equation}\label{radius}
\radius_{\T}\X 
= \inf\{n \geq 0\mid \X \subseteq {\langle G \rangle}_{n+1} ^{\T} \text{ for some }G \in \T \}.
\end{equation}
\end{enumerate}
\end{dfn}
%%%%%%%%%%%%%%%%%%%%%%%%%%%%%%%%%%%%%%%%%%%%%%%%%%%%%%%%%%%%
\begin{rmk} \label{rmk of dim rad}
Let $\T$ be an essentially small triangulated category and 
$\X, \Y, \Z$ be subcategories of $\T$.
\begin{enumerate}[\rm(1)]
\item
The thick closure
$\thick_{\Db(R)}\{R\}$
is the subcategory of $\Db(R)$ consisting of complexes with finite projective dimension.
\item
Let $M$ be an object in $\T$. 
Then the following conditions are equivalent.
\begin{itemize}
\item 
$M$ belongs to $\X \diamond \Y$.
\item
There exists an exact triangle 
$X \to Z \to Y \to \Sigma X$ 
of objects in $\T$ with
$X \in \langle \X \rangle$, $Y \in \langle \Y \rangle$ 
, and $M$ is a direct summand of $Z$.
\end{itemize}
\item
One has 
$(\X \diamond \Y) \diamond \Z = \X \diamond ( \Y \diamond \Z)$.
\item
There is an ascending chain
\begin{center}
$0 \subseteq \langle \X \rangle \subseteq
{\langle \X \rangle}_{2} \subseteq
\ldots \subseteq
{\langle \X \rangle}_{n}
\subseteq \ldots
\subseteq \thick_{\T} \X$
\end{center}
of subcategories of $\T$, and
the equality $\displaystyle\bigcup_{i\ge 0}{\langle \X \rangle}_{i} = \thick_{\T} \X$ 
holds.
\item
Let $\T'$ be another essentially small triangulated category 
and $F : \T \to \T'$ a triangle functor.
Then for all nonnegative integers $n$, we have
$F({\langle \X \rangle}_{n} ^{\T}) \subseteq
{\langle F(\X) \rangle}_{n} ^{\T'}$. 
Hence the triangle functor
$\thick_{\T} \X \to \thick_{\T'} F(\X)$
is induced by $F$.
\item
The notion of the radius of $\X$ in $\T$ stated in (\Cref{radius}) coincides with what is defined as $\dim_{\T}\X$ in \cite[Definition 2.7]{Oppermann}.
Note that if $\X = \T$, then $\radius_{\T}\T$ is precisely the dimension of $\T$ as a triangulated category introduced by Rouquier \cite{Rouquier}, denoted by $\dim \T$. 
In addition, the inequality
$\dim \T \ge \radius_{\T} \X$ always holds.
\item
Let $I$ be an ideal of $R$.
Then we can consider a triangle functor
$-\otimes_{R/I} ^{\mathbf{L}}R/I : 
\Db(R/I) \to \Db(R)$,
which regards the objects in $\Db(R/I)$ as those in $\Db(R)$.
We often identify $\Db(R/I)$ (or its subcategory) with its essential image under the functor $-\otimes_{R/I} ^{\mathbf{L}}R/I$.

Let $X$ be an object in $\Db(R)$ and
$Y$ an object in $\Db(R/I)$.
Let $Q : \Db(R) \to \Dsg(R)$ be the quotient functor.
Then for all nonnegative integers $n$, we have the following inclusion relations.
\begin{center}
${\langle Y \rangle}_{n} ^{\Db(R/I)} \otimes_{R/I} ^{\mathbf{L}}R/I
\subseteq {\langle Y \rangle}_{n} ^{\Db(R)}$
,\quad
$Q{\langle X \rangle}_{n} ^{\Db(R)}
\subseteq {\langle X \rangle}_{n} ^{\Dsg(R)}$.
\end{center}
Hence, under the identifications stated above, the following inequalities hold.
\begin{equation*}
\xymatrix@C=2pt@R=2pt
{
\dim \Db(R/I) & \ge &  \radius_{\Db(R)}\Db(R/I) & \ge & \radius_{\Dsg(R)}\Db(R/I) \\
\rotatebox{90}{$\le$} & & \rotatebox{90}{$\le$} & &\rotatebox{90}{$\le$}  \\
\radius_{\Db(R/I)}(\mod R/I) & \ge & \radius_{\Db(R)}(\mod R/I) & \ge & \radius_{\Dsg(R)}(\mod R/I).  \\
}
\end{equation*}
In particular, there is an inequality
$\dim \Db(R/I) \ge \radius_{\Dsg(R)}(\mod R/I)$.
\end{enumerate}
\end{rmk}

\begin{dfn} \label{def of ball}
Let $\A$ be an abelian category with enough projectives 
and $\X , \Y$ be subcategories of $\A$.
\begin{enumerate}[\rm(1)]
\item
Let $\mathcal{C}$ be an additive category 
and $\X$ a subcategory of $\mathcal{C}$.
Denote by $\add_{\mathcal{C}} \X$ the {\em additive closure of $\X$}, namely, the subcategory of $\mathcal{C}$ consisting of direct summands of finite direct sums of objects in $\X$. When $\X$ consists of a single object $X$, we simply denote $\add_{\mathcal{C}} \{ X\}$ by $\add_{\mathcal{C}} X$.

\item 
We denote by 
$[\X]$
the additive closure of the subcategory of $\A$ consisting of projective objects of $\A$ and objects of the form 
$\syz^{i}_{\A} X$, where $i \in \mathbb{N}$ and $X \in \X$.
If $\X$ consists of a single object $X$, we simply denote $[\{\X\}]$ by $[\X]$. 
\item
We denote by $\X \bullet \Y$ the subcategory of $\T$ consisting of objects $Z$ such that there exists an exact sequence 
$0\to X \to Z \to Y \to 0$ with $X\in \X$ and $Y\in \Y$. 

\item
For a nonnegative integer $r$, we inductively define the {\em ball of radius $r$ centered at $\X$} as follows.
\begin{equation}
{[ \X ]}_{r} = \nonumber
\begin{cases}
0 & \text{if $r=0$,} \\
[ \X ] & \text{if $r=1$,} \\
{[[\X]_{r-1} \bullet [ \X]]} & \text{if $r>1$.}
\end{cases}
\end{equation}
If $\X$ consists of a single object $X$, we simply denote ${[ \{X\}]}_{r}$ by $[ \X ]_{r}$, and call it {\em the ball of radius $r$ centered at $X$}.
We write $[ \X ]_{r} ^{\A}$ when we should specify that $\A$ is the underlying category where the ball is defined.

% \item
% We define the {\em radius} of $\X$ in $\T$, denoted by $\radius_{\T} \X$, as the infimum of the nonnegative integers $n$ such that there exists a ball of radius $n+1$ centered at an object containing $\X$.
% In other words, 
% \begin{equation}\label{radius}
% \radius_{\T}\X 
% = \inf\{n \geq 0\mid \X \subseteq {\langle G \rangle}_{n+1} ^{\T} \text{ for some }G \in \T \}.
% \end{equation}

\end{enumerate}
\end{dfn}

Here we focus on the inequality
$\dim\Db(R/I) \ge \radius_{\Db(R/I)}(\mod R/I)$.
Combining this with \cite[Proposition 2.6]{AAITY}, we obtain the following inequalities.
\begin{center}
$\radius_{\Db(R/I)}(\mod R/I) \le
\dim\Db(R/I)  \le
2\radius_{\Db(R/I)}(\mod R/I)+1$.
\end{center}
%%%%%%%%%%%%%%%%%%%%%%%%%%%%%%%%%%%%%%%%%%%%%%%%%%%%%%%%%%%%
Now we give an example where 
$\radius_{\Db(R/I)}(\mod R/I)$
is smaller than 
$\dim\Db(R/I)$.
\begin{eg}
Let $k$ be a field with characteristic $0$ and we consider 
\begin{center}
$R=k[X, Y]/(X^{2}-Y^{3})$ 
(or $k[\![X, Y]\!]/(X^{2}-Y^{3})$).
\end{center}
Then the {\em Jacobian ideal} of $R$ is 
$I=(x,y^{2})$ (see \Cref{def for lemma} (4) for the definition of a Jacobian ideal).
Hence $R/I \cong k[X]/(X^{2})$ (or $k[\![X]\!]/(X^{2})$)
is the algebra of dual numbers.
Therefore one has $\dim\Db(R/I) = 1$ by \cite[Remark 7.18]{Rouquier}.
On the other hand, the structure theorem for finitely generated modules over a principal ideal domain implies that $\mod R/I$ is the additive closure of $R$ and $k$.
Thus we obtain $\radius_{\Db(R/I)}(\mod R/I) = 0$.
For some more examples, see  \Cref{dim vs rad}.
\end{eg}

%%%%%%%%%%%%%%%%%%%%%%%%%%%%%%%%%%%%%%%%%%%%%%%%%%%%%%%%%%%%
We recall the definitions of an $R$-linear triangulated category and its annihilator ideal.
\begin{dfn} \label{def of lin tri cat}
\begin{enumerate}[\rm(1)]
\item
An additive category $\mathcal{C}$ is {\em $R$-linear} if 
for all objects $X\in \C$, there exists a ring homomorphism
$\phi_{X} : R\to \End_{\C}(X)$,
and $\phi$ commutes with the morphisms in $\C$, that is, 
for all morphisms $X \xrightarrow{f} Y$ in $\C$ and 
elements $r \in R$, 
one has $\phi_{Y}(r)\circ f=f\circ \phi_{X}(r)$.
Here we can view $\Hom_{\C}(X, Y)$ as an $R$-module through ring homomorphisms $\phi_{X}$ and $\phi_{Y}$ for all objects $X, Y \in \C$.
\item
A triangulated category $\T$ is {\em $R$-linear triangulated} if $\T$ is $R$-linear as an additive category and for all objects $X, Y$ in $\T$, the isomorphism
$\Hom_{\T}(X,Y) \to \Hom_{\T}(\Sigma X,\Sigma Y)$ 
of abelian groups induced by the shift functor $\Sigma$ is $R$-linear.
\item
Let $\mathcal{C}$ be an  essentially small $R$-linear category. 
For an object $X \in \mathcal{C}$, we define the {\em annihilator} of $X$, 
denoted by $\ann_R X$, as the annihilator ideal of $\End_{\T}(X) = \Hom_{\T}(X,X)$ as an $R$-module.
For a subcategory $\X$ of $\mathcal{C}$, we define
$\ann_R \X$ the intersection of all ideals $\ann_R X$ where $X$ runs thorough objects in $\X$.
\end{enumerate}
\end{dfn}
%%%%%%%%%%%%%%%%%%%%%%%%%%%%%%%%%%%%%%%%%%%%%%%%%%%%%%%%%%%%
\begin{rmk}
The bounded derived category of finitely generated $R$-modules $\Db(R)$ and its triangulated subcategories are essentially small $R$-linear triangulated categories. 
In addition, a Verdier quotient of an $R$-linear triangulated category $\T$ by its thick subcategory $\S$ is also an $R$-linear triangulated category via the quotient functor $\T \to \T/\S$.
\end{rmk}
%%%%%%%%%%
We recall the notion of a resolving subcategory of $\mod R$ and its Spanier--Whitehead category.
\begin{dfn} \begin{enumerate}[\rm(1)]
\item
A subcategory $\X$ of $\mod R$ is {\em resolving} if $\X$ contains $R$ and is closed under direct summands, extensions, and kernels of epimorphisms.

    \item Given a subcategory $\X$ of $\mod R$, $\filt_{\mod R}(\X)$ will denote the smallest extension-closed subcategory of $\mod R$ containing $\X$. Similarly, $\res_{\mod R}(\X)$ will denote the smallest resolving subcategory of $\mod R$ containing $\X$. See \cite[Definition 1.7]{stcm} for details. 

   % \item If $\X$ is a subcategory of $\mod R$ closed under extensions (resp. resolving subcategory) and $R/\p\in \X$ for every $\p\in V(I)$, then $\mod(R/I)\subseteq \X$ (resp. $\syz^i_R(\mod(R/I))\subseteq \X$ for all $i\ge 0$). 

    \item Given a resolving subcategory $\X$ of $\mod R$, $\sw(\X)$ will denote the Spanier--Whitehead category as defined in \cite[Definition 2.6]{bah2}.  $\sw(\X)$ is naturally identified with a full triangulated subcategory of $\Dsg(R)$. Moreover, since $\X$ is closed under direct summands in $\mod R$, and every object of $\sw(\X)$ is of the form $X[m]$ for some $X\in \X, m\in \mathbb Z$, hence it follows from \cite[Lemma 5.2(2)]{bah2} that $\sw(\X)$ is closed under direct summands in $\Dsg(R)$, and thus $\sw(\X)$ is a full thick subcategory of $\Dsg(R)$. In particular, one has $\ann_{R}\End_{\Dsg(R)}(M)=\ann_{R}\End_{\sw(\X)}(M)$ and $\langle M\rangle_n^{\Dsg(R)}=\langle M\rangle_n^{\sw(\X)}$ for every $M\in \sw(\X)$ and $n\ge0$. 
\end{enumerate} 
    
\end{dfn}

\begin{rmk}\label{remsw} \begin{enumerate}[\rm(1)]

\item For a subset $\Phi$ of $\spec R$, the singularity category $\operatorname{D}^\Phi_{\operatorname{sg}}(R)$ supported at $\Phi$ was introduced and studied in \cite[Definition 2.8, Theorem 4.1, and Corollary 4.2]{kos}. It follows from \cite[Theorem 3.2, Definition 3.3(1), and Proposition 4.2]{bah2} and \cite[Corollary 4.2]{kos} that for a specialization-closed subset $\Phi$ of $\spec R$, $\sw(\operatorname{res}_{\mod R}\{R/\p \mid \p\in \Phi\})=\operatorname{D}^\Phi_{\operatorname{sg}}(R)$. In particular, one has $\sw(\mod R)=\Dsg(R)$.

    \item Let $\X$ be a resolving subcategory of $\mod R$. Let $X\in \X,G\in \sw(\X)$ and $n\ge 0$ an integer. We know $G\cong H[m]$ for some $H\in \X$ and $m\in \mathbb Z$. It follows from the argument of \cite[Proposition 5.3]{bah2} that $$X\in \langle G\rangle_n^{\sw(\X)} \iff  \syz^r X\in [H]^{\mod R}_n \text{ for some } r\ge 0 \iff X\in \langle G\rangle_n^{\Dsg(R)}.$$ 

\end{enumerate} 
    
\end{rmk}
%%%%%%%%%%
We recall some notions concerning certain subsets of the prime spectrum of $R$, denoted by $\spec R$.
\begin{dfn}
\begin{enumerate}[\rm(1)]
\item
A subset $\Phi$ of $\spec R$ is {\em specialization-closed} if $\p\subseteq\q$ are prime ideals of $R$ and $\Phi$ contains $\p$, then $\q$ belongs to $\Phi$.
\item
We denote by $\Sing R$ the {\em singular locus} of $R$, that is, the set of prime ideals $\p$ of $R$ such that the localization $R_{\p}$ is not a regular local ring. 
\item
For a finitely generated $R$-module $M$, we denote by $\NF(M)$ the {\em nonfree locus} of $M$, that is, the set of prime ideals $\p$ of $R$ such that $M_{\p}$ is not free $R_{\p}$-module.
For a subcategory $\X$ of $\mod R$, we set $\NF(\X):=\bigcup_{M\in \X} \NF(M)$.
\item
For a finitely generated $R$-module $M$, we denote by $\ipd(M)$ the {\em infinite projective dimension locus} of $M$, that is, the set of prime ideals $\p$ of $R$ such that $\pd_{R_{\p}}(M_{\p})=\infty$.
For a subcategory $\X$ of $\mod R$, we set $\ipd(\X):=\bigcup_{M\in \X} \ipd(M)$.
\end{enumerate}
\end{dfn}

\begin{rmk}\label{ipd}
Let $\X$ be a subcategory of $\mod R$.
\begin{enumerate}[\rm(1)]
\item
The subsets $\Sing R$, $\NF(\X)$, and $\ipd(\X)$ are specialization-closed subsets of $\spec R$.
\item
It is easy to see that $\ipd(\res_{\mod R}\X)=\ipd(\X)\subseteq \Sing R$. 
\item 
Let $\Phi$ be a subset of $\spec R$. 
Then one has $\Phi\cap\Sing R \subseteq \ipd(\{R/\p \mid \p\in \Phi\})$. if, moreover, $\Phi$ is specialization-closed, then the equality holds.
Indeed, if $\p\in \Phi\cap \Sing R$, then $\p\in \ipd(R/\p)\subseteq \ipd(\{R/\p \mid \p\in \Phi\})$. Now let $\Phi$ be specialization-closed. Then, $\ipd(R/\p)\subseteq \V(\p)\cap \Sing R\subseteq \Phi\cap \Sing R$ for each $\p\in\Phi$. Thus, $\ipd(\{R/\p \mid \p\in \Phi\})\subseteq \Phi\cap \Sing R$. 
\item 
In view of \cite[3.1 and Proposition 4.2]{dey2025annihilationcohomologystronggeneration} we see that $\ca(\X)=\bigcap_{M\in \X}\ann_{R}\End_{\Dsg(R)}(M)=\ann_R \sw(\X)$, see \cite[Definition 5.7(1)]{bah2} for the notion of $\ca(\X)$. 
In addition, one has $\ipd(\X)=\bigcup_{M\in \X} \V(\ann_{R}\End_{\Dsg(R)}(M))\subseteq \V\left(\bigcap_{M\in \X} \sqrt{\ann_{R}\End_{\Dsg(R)}(M)}\right)\subseteq \V(\ca(\X))=\V(\ann_R \sw(\X))$ (see \cite[Propositions 3.10 and 4.2]{dey2025annihilationcohomologystronggeneration}). 
\end{enumerate}
    
\end{rmk}
%%%%%%%%%%%%%%%%%%%%%%%%%%%%%%%%%%%%%%%%%%%%%%%%%%%%%%%%%%%%
We end this section by recalling the notion of Koszul objects in an $R$-linear triangulated category.
\begin{dfn} \label{def of koszul obj}
Let $\T$ be an $R$-linear triangulated category.
For an object $X$ in $\T$ 
and a sequence $\xx = x_1, \ldots, x_n$ of elements in $R$, 
we inductively define the {\em Koszul object} of $\xx$ on $X$, denoted by $X/\!\!/\xx$, as follows.
\begin{equation}
X/\!\!/\xx = \nonumber
\begin{cases}
X & \text{if $n=0$,} \\
\cone(x_n 1_{X/\!\!/(x_1, \ldots, x_{n-1})}) & 
\text{if $n>0$.}
\end{cases}
\end{equation}
Note that if $X$ is an object in $\Db(R)$, 
then the Koszul object
$X/\!\!/\xx$ 
is precisely the Koszul complex 
$\k(\xx, X)$
of $\xx$ on $X$.
\end{dfn}
%%%%%%%%%%%%%%%%%%%%%%%%%%%%%%%%%%%%%%%%%%%%%%%%%%%%%%%%%%%%
\begin{rmk} \label{end annihilators}
Let $\T$ be an $R$-linear triangulated category.
For an object $X$ in $\T$ 
and a sequence $\xx =x_{1}, \ldots, x_n$ of elements in $\ann_R X$,
we have an isomorphism
$X/\!\!/\xx \cong
\displaystyle\bigoplus_{i=0} ^{n}
(\Sigma^{i}X)^{\oplus \binom{n}{i}}$
in $\T$. This is proved by induction on $n\ge0$. 
In particular, $X$ is a direct summand of $X/\!\!/\xx$ in $\T$.
\end{rmk}
%%%%%%%%%%%%%%%%%%%%%%%%%%%%%%%%%%%%%%%%%%%%%%%%%%%%%%%%%%%%

%%%%%%%%%%%%%%%%%%%%%%%%%%%%%%%%%%%%%%%%%%%%%%%%%%%%%%%%%%%%
\section{Localizations of $R$-linear triangulated categories}
The main purpose of this section is to prove \Cref{localiz of ver quot} based on that of \cite[Lemma 4.3]{Liu}, and deduce some results.
First of all, we state some general statements about a localization of an $R$-linear triangulated category.

%%%%%%%%%%%%%%%%%%%%%%%%%%%%%%%%%%%%%%%%%%%%%%%%%%%%%%%%%%%%
Let $R$ be a commutative noetherian ring and $\p$ a prime ideal of $R$. 
Let $\T$ be an $R$-linear triangulated category
and $\X$ a subcategory of $\T$.
Let $\T'$ be an $R_{\p}$-linear triangulated category
and $F : \T \to \T'$ a triangle $R$-functor.
We define $\S_{\T}(\p)$ to be the subcategory of $\T$ consisting of objects $X$ which satisfy $\End_{\T}(X)_{\p} =0$.
This is equivalent to say that there exists an element $s \in R \setminus \p$ such that $s \cdot 1_{X} = 0$ in $\T$. 
By \cite[Lemma 3.1(a)]{Matsui}, $\S_{\T}(\p)$ is a thick subcategory of $\T$.
We define the {\em localization} of $\T$ at $\p$ as the Verdier quotient $\T_{\p} = \T/\S_{\T}(\p)$.

We can see that
$\S_{\T}(\p) \subseteq 
\{X\in \T \mid FX=0 \text{ in } \T' \}$ holds.
Indeed, let $X$ be an object in $\T$ with 
$\End_{\T}(X)_{\p} =0$. 
Then there exists an element $s \in R \setminus \p$ such that
$s \cdot 1_{X} = 0$.
Hence $X$ is a direct summand of $X/\!\!/s$ in $\T$, so it is enough to show that $F(X/\!\!/s)=0$.
Since $\End_{\T'}(FX)$ possesses an $R_{\p}$-module structure, 
the element $s$ acts unitary on $\End_{\T'}(FX)$. 
Thus we have
$F(X/\!\!/s) 
= F(\cone(s \cdot 1_{X}))
\cong \cone(F(s \cdot 1_{X})) 
= \cone(s \cdot 1_{FX}))
= 0$.
Therefore the functor $F$ induces the triangle functor
$ F_{\p} ^{1} : \T_{\p} \to \T'$.
Consequently, for all objects $X, Y$ in $\T$, we obtain the following commutative diagram.
\begin{equation*} 
\xymatrix{
  & \Hom_{\T}(X,Y)_{\p} \ar[ldd]_{F_{\p} ^{0}(X,Y)} \ar[rdd]^{F(X,Y)_{\p}}   &    \\
  & \Hom_{\T}(X,Y) \ar[u] \ar[ld] \ar[rd] &    \\
\Hom_{\T_{\p}}(X,Y) \ar[rr]^{F_{\p} ^{1}(X,Y)}  &   & \Hom_{\T'}(FX, FY)
}
\end{equation*}
Here the homomorphism 
$F_{\p} ^{0}(X,Y) : \Hom_{\T}(X,Y)_{\p} \to \Hom_{\T_{\p}}(X,Y)$ 
in the above diagram induced by quotient functor 
$\T \to \T_{\p}$ 
is an isomorphism by \cite[Lemma 3.1(c)]{Matsui}. 
As the outer diagram commutes, the functor
$F_{\p} ^{1}$ is a triangle equivalence if and only if
$F$ is dense and $F(X,Y)_{\p}$ is an isomorphism for all $X, Y \in \T$.

An additive functor $F : \C \to \D$ of additive categories is {\em essentially dense} if for each object $Y$ in $\D$, there exists an object $X$ in $\C$ such that $Y$ is a direct summand of $FX$.
%%%%%%%%%%%%%%%%%%%%%%%%%%%%%%%%%%%%%%%%%%%%%%%%%%%%%%%%%%%%

In the following proposition, we investigate the relationship between Verdier quotient and localization of an $R$-linear triangulated category.
Consequently, the following (2) and (3) each provide a generalization of \cite[Lemmas 4.1 and 4.3]{Liu}, respectively.
\begin{prop} \label{localiz of ver quot}
Let $\p$ be a prime ideal of $R$, 
$\T$ an $R$-linear triangulated category, and
$\T'$ an $R_{\p}$-linear triangulated category.
Let $F : \T \to \T'$ be a triangle $R$-functor.
Then for all subcategories $\X$ of $\T$, the following hold.
\begin{enumerate}[\rm(1)]
\item 
Assume that $F_{\p} ^{1}$ is full.
Then the induced functor
$F : \thick_{\T} \X \to \thick_{\T'} F\X$
is essentially dense.
\item
Assume that $F_{\p} ^{1}$ is fully faithful.
Let $X$ be an object in $\T$.
If the object $FX$ belongs to $\thick_{\T'} F\X$, then
there exists an element $s \in R \setminus \p$ such that 
$s \cdot 1_{X} = 0$ in $\T/\thick_{\T} \X$.
\item
Assume that $F_{\p} ^{1}$ is a triangle equivalence.
Then the induced functor
$\overline{F} : (\T/\thick_{\T}\X)_{\p} \to \T'/\thick_{\T'} F\X$
of Verdier quotients is also a triangle equivalence.
\end{enumerate}
\end{prop}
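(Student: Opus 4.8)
The plan is to treat the three parts in order, with part (3) being essentially a packaging of parts (1) and (2) together with the general diagram already set up before the statement. For part (1), I would argue that $F(\thick_{\T}\X)$ is contained in $\thick_{\T'}F\X$ by \Cref{rmk of dim rad}(5), so the induced functor is well defined; for essential density I would take an arbitrary object $W\in\thick_{\T'}F\X$ and show it is a direct summand of some $F(Y)$ with $Y\in\thick_{\T}\X$. The natural strategy is to induct on the number of cone-steps (equivalently, on the balls $\langle F\X\rangle_n^{\T'}$) needed to build $W$: the base case uses that $\langle F\X\rangle$ consists of summands of finite sums of shifts of objects $FX$, each of which lifts literally to $\T$; the inductive step takes a triangle $A\to W'\to B\to\Sigma A$ realizing a new cone with $A,B$ already hit (up to summands) by objects of $\thick_{\T}\X$, lifts the connecting morphism $B\to\Sigma A$ through $F_{\p}^{1}$ using \emph{fullness} (after passing to the $\T_{\p}$-level, where $\Hom_{\T_{\p}}\cong\Hom_{\T}(-,-)_{\p}$ via $F_{\p}^{0}$, so a preimage in $\T$ exists after inverting some $s\notin\p$), completes it to a triangle in $\T$, and observes $F$ of the middle term is isomorphic to $W'$ up to the summand bookkeeping. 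One must be careful that lifting only succeeds after multiplying by a unit of $R_{\p}$, but since we are allowed to enlarge $Y$ by summands and by Koszul objects on such $s$ (using \Cref{end annihilators}), this is harmless; alternatively, one works throughout in $\T_{\p}$ and only at the end notes that objects of $\T_{\p}$ are objects of $\T$.

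For part (2), suppose $FX\in\thick_{\T'}F\X$. Since $F_{\p}^{1}\colon\T_{\p}/\thick_{\T_{\p}}(\text{image of }\X)\to\T'/\thick_{\T'}F\X$ is induced by a fully faithful functor on the relevant subquotients, and $FX$ maps to zero in $\T'/\thick_{\T'}F\X$, faithfulness forces the image of $X$ to be zero in $(\T/\thick_{\T}\X)_{\p}$. Concretely: the identity $1_{X}$, viewed in $\Hom_{(\T/\thick_{\T}\X)_{\p}}(X,X)$, maps under the faithful $\overline{F}$ (or under $F_{\p}^{1}$ composed with the quotient) to $1_{FX}=0$ in $\Hom_{\T'/\thick_{\T'}F\X}(FX,FX)$, hence $1_{X}=0$ already in $(\T/\thick_{\T}\X)_{\p}$. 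By \cite[Lemma 3.1(c)]{Matsui}, $\Hom_{(\T/\thick_{\T}\X)_{\p}}(X,X)\cong\End_{\T/\thick_{\T}\X}(X)_{\p}$, so $1_{X}/1=0$ in the localization, which means precisely that $s\cdot 1_{X}=0$ in $\T/\thick_{\T}\X$ for some $s\in R\setminus\p$. Here the one point needing care is the identification of the localization of the Verdier quotient with the Verdier quotient of the localization, i.e. $(\T/\thick_{\T}\X)_{\p}\simeq\T_{\p}/\thick_{\T_{\p}}(\overline{\X})$ — but this will be exactly what part (3) establishes (or can be extracted from its proof), so one should either sequence the argument to prove the needed equivalence first or phrase (2) directly in terms of $\overline{F}$ being faithful, deferring to (3) for why the source of $\overline{F}$ is as claimed.

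For part (3), I would assemble the pieces: by the discussion preceding the proposition, $F$ induces $F_{\p}^{1}\colon\T_{\p}\to\T'$, and passing to thick-closure quotients gives $\overline{F}\colon(\T/\thick_{\T}\X)_{\p}\to\T'/\thick_{\T'}F\X$ after checking that $\S_{\T}(\p)\subseteq\thick_{\T}\X$ composed appropriately — more precisely, that localizing $\T/\thick_{\T}\X$ at $\p$ kills exactly the objects whose endomorphism ring dies at $\p$, and that $F_{\p}^{1}$ carries $\thick_{\T_{\p}}(\overline{\X})$ onto $\thick_{\T'}F\X$. Essential density of $\overline{F}$ follows from part (1) (every object of $\thick_{\T'}F\X$ is a summand of $F(Y)$, and in the quotient $\T'/\thick_{\T'}F\X$ every object is a summand of some $F_{\p}^{1}(Z)$ since $F_{\p}^{1}$ itself is essentially surjective, being a triangle equivalence). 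Faithfulness of $\overline{F}$ follows from part (2): a morphism in $(\T/\thick_{\T}\X)_{\p}$ killed by $\overline{F}$ can be realized, via the calculus of fractions and $F_{\p}^{0}$, by a genuine morphism $f$ in $\T$ whose image in $\T'$ factors through $\thick_{\T'}F\X$; applying (2) to the cone of $f$ (or to a suitable shift) shows $f$ becomes zero in $(\T/\thick_{\T}\X)_{\p}$. Fullness of $\overline{F}$ is the mirror image, using fullness of $F_{\p}^{1}$ and \cite[Lemma 3.1(c)]{Matsui} to lift an arbitrary morphism $F X\to FY$ in $\T'/\thick_{\T'}F\X$ to a fraction in $(\T/\thick_{\T}\X)_{\p}$. \textbf{The main obstacle} I anticipate is the bookkeeping in part (1)'s induction — keeping track of direct summands and of the auxiliary denominators $s\in R\setminus\p$ introduced each time a connecting morphism is lifted — together with cleanly justifying the identification $(\T/\thick_{\T}\X)_{\p}\simeq\T_{\p}/\thick_{\T_{\p}}(\overline{\X})$ that makes the statement of (3) even type-check; everything else is a routine application of the calculus of fractions and of \cite[Lemma 3.1]{Matsui}.
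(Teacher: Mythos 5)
Your part (1) follows essentially the paper's argument: induct on the balls $\langle F\X\rangle_n^{\T'}$, lift the connecting morphism $FW\to\Sigma(FZ)$ through the surjection $\Hom_{\T}(W,\Sigma Z)_{\p}\to\Hom_{\T'}(FW,F(\Sigma Z))$ at the cost of a denominator $s\in R\setminus\p$, and compare cones. One correction, though: your proposed remedy for the denominator --- enlarging by Koszul objects on $s$ via \Cref{end annihilators} --- is the wrong tool; that remark applies when $s$ \emph{annihilates} an object, whereas here the point is the opposite: $s$ acts as a \emph{unit} on every object of the $R_{\p}$-linear category $\T'$, so $s\delta$ and $\delta$ have isomorphic cones and the lifted triangle $Z\to V\to W\xrightarrow{\eta}\Sigma Z$ satisfies $FV\cong B\oplus A'\oplus C'$ directly. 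No summand bookkeeping beyond what you already carry is needed.

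The genuine gap is in part (2), where your plan is circular. You deduce (2) from the faithfulness of $\overline{F}\colon(\T/\thick_{\T}\X)_{\p}\to\T'/\thick_{\T'}F\X$, deferring the construction and faithfulness of $\overline{F}$ to part (3); but in part (3) you then say that faithfulness of $\overline{F}$ ``follows from part (2).'' You flag this tension yourself but do not resolve it, and resolving it is exactly the content one must supply: the standard proof that $\overline{F}$ is faithful (as in Liu's Lemma 4.3, which the paper invokes for (3)) runs precisely through the kind of direct factorization argument that proves (2). The paper's proof of (2) avoids any appeal to (3): from (1), $FX$ is a direct summand of $FX'$ for some $X'\in\thick_{\T}\X$, with splitting maps $\xi,\eta$ satisfying $\eta\xi=1_{FX}$; fullness of $F_{\p}^{1}$ (plus the isomorphism $F_{\p}^{0}$) lets one write $s\xi=F(f/1)$ and $t\eta=F(g/1)$ for $s,t\in R\setminus\p$ and genuine morphisms $X\xrightarrow{f}X'\xrightarrow{g}X$ in $\T$; then $F((gf-ts)/1)=0$, and faithfulness gives $u\in R\setminus\p$ with $ugf=uts$ in $\End_{\T}(X)$, so $(uts)\cdot 1_{X}$ factors through $X'\in\thick_{\T}\X$ and hence vanishes in $\T/\thick_{\T}\X$. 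You should replace your (2) by this direct argument (or prove the faithfulness of $\overline{F}$ from scratch first, which amounts to the same computation applied to an arbitrary morphism rather than to $1_{X}$). Your outline of (3) is otherwise consistent with the paper, which establishes density from that of $F$ and refers to Liu for the hom-bijectivity.
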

\begin{proof}
(1) It is enough to show that for all $n \ge 1$ and objects
$Y$ in ${\langle F\X \rangle}_{n} ^{\T'}$, 
there exists an object $X$ in 
${\langle \X \rangle}_{n} ^{\T}$ such that
$Y$ is a direct summand of $FX$.
We shall prove this by induction on $n\ge 1$.
When $n=1$, the object $Y$ is a direct summand of
$\displaystyle\bigoplus_{i=1} ^{k}(\Sigma^{a_{i}}FX_{i}) \cong F(\displaystyle\bigoplus_{i=1} ^{k}\Sigma^{a_{i}}X_{i})$ for some $k\ge0, a_{i} \in \mathbb{Z}$, and $X_{i} \in \X$. 
As the object $\displaystyle\bigoplus_{i=1} ^{k}(\Sigma^{a_{i}}X_{i})$
belongs to $\langle \X \rangle ^{\T}$, it is verified.

Suppose $n>1$ and $Y$ is in ${\langle F\X \rangle}_{n} ^{\T'}$.
Then there exists an exact triangle 
\begin{center}
$A \to B \to C \to \Sigma A$ 
\end{center}
in $\T'$ with $A \in {\langle F\X \rangle}_{n-1} ^{\T'}$
, $C \in \langle F\X \rangle ^{\T'}$, and $Y$ is a direct summand of $B$.
By the induction hypothesis and the case where $n=1$, 
there exist objects
$Z \in {\langle \X \rangle}_{n-1} ^{\T}$, 
$W \in \langle \X \rangle^{\T}$, and 
$A', C' \in \T'$ such that
$A \oplus A' \cong FZ$, and 
$C \oplus C' \cong FW$ in $\T'$.
By taking the direct sums of $A'$ and $C'$ in the above exact triangle, we have the following exact triangle.
\begin{center}
$FZ \to B\oplus A' \oplus C' \to FW 
\xrightarrow{\delta_{0}} \Sigma (FZ)$.
\end{center}
Let $\delta$ be the composition of morphisms 
$FW \xrightarrow{\delta_{0}} \Sigma (FZ) 
\xrightarrow[\cong]{f} F(\Sigma Z)$.
Since the homomorphism  
$\Hom_{\T}(W, \Sigma Z)_{\p} \to \Hom_{\T'}(FW, F(\Sigma Z))$ 
induced by $F$ is surjective, 
there exist an element $s \in R\setminus \p$ and 
$\eta \in \Hom_{\T}(W, \Sigma Z)$ such  that
$F(\eta/1)=s\delta$.
Hence we get an exact triangle 
\begin{center}
$Z \to V \to W \xrightarrow{\eta} \Sigma Z$ 
\end{center}
in $\T$, where $V$ is a mapping cocone of $\eta$. 
Since $Z$ is in ${\langle \X \rangle}_{n-1} ^{\T}$ and
$W$ is in $\langle \X \rangle^{\T}$, 
the object $V$ belongs to ${\langle \X \rangle}_{n} ^{\T}$.
Applying the functor $F$ to the above exact triangle, we obtain the following diagram where each row is an exact triangle in $\T'$.
\begin{equation*}
\xymatrix{
FZ \ar[r] & FV \ar[r] & FW \ar[r]^{F(\eta)} & F(\Sigma Z) \\
FZ \ar[r] \ar@{=}[u] & B\oplus A'\oplus C' \ar[r] & FW \ar@{=}[u] \ar[r]^{\delta_{0}} & \Sigma(FZ) \ar[u]^{\cong}_{s\cdot f} 
}
\end{equation*}
Thus one has an isomorphism $FV \cong B\oplus A'\oplus C'$.
Since $Y$ is a direct summand of $B$, it is also a direct summand of $FV$. This completes the proof.

(2) Since $FX$ belongs to $\thick_{\T'} F\X$, 
there exists an object $X'$ in $\thick_{\T} \X$ such that 
$FX$ is a direct summand of $FX'$ in $\T'$ by the consequence of (1).
Hence there exist homomorphisms
$FX \xrightarrow{\xi} FX' \xrightarrow{\eta} FX$ 
such that $\eta \xi = 1_{FX}$.
By the surjectivity of 
$\Hom_{\T}(X, X')_{\p} \to \Hom_{\T'}(FX, FX')$
and $\Hom_{\T}(X', X)_{\p} \to \Hom_{\T'}(FX', FX)$, 
we can find elements $s, t \in R\setminus \p$ 
and homomorphisms
$X \xrightarrow{f}X'\xrightarrow{g}X$ in $\T$
satisfying that
$s\cdot \xi = F(f/1)$ and $t\cdot \eta = F(g/1)$.
The condition $\eta \xi = 1_{FX}$ implies that
$F((gf-ts)/1)=0$ in $\End_{\T'}(FX)$.
By the injectivity of $\End_{\T}(X)_{\p} \to \End_{\T'}(FX)$, 
there exists an element $u \in R\setminus \p$ such that
$ugf=uts$ in $\End_{\T}(X)$.
Thus the homomorphism $(uts)\cdot 1_{X}$
factors through the object $X'$ in $\thick_{\T}\X$, and this means that 
$(uts) \cdot 1_{X} = 0$ in $\T/\thick_{\T} \X$.

(3) Since the functor $F$ is dense, the induced functor $\overline{F}$ is also dense. 
Hence we only need to prove that for all objects $X, Y \in \T$, the induced homomorphism
$\Hom_{\T/\thick_{\T}\X}(X, Y)_{\p}
\to
\Hom_{\T'/\thick_{\T'}F\X}(FX, FY)$
is bijective.
This proof is basically the same as the proof in \cite[Lemma 4.3]{Liu}, obtained by replacing $\Db(R), \Db(R_{\p})$, $(-)_{\p}$, and $\{ R\}$ with $\T, \T', F$, and $\X$ respectively.
Therefore we omit the details of the arguments.
\end{proof}
By taking $\Db(R)$, $\Db(R_{\p})$, and $(-)_{\p}$ as $\T$, $\T'$, and $F$ respectively, we obtain the following triangle equivalence, which means that the Verdier quotient and the localization of bounded derived category are commutative.
In particular, by taking $\{R\}$ as $\X$ in the following corollary, we can recover \cite[Corollary 4.4]{Liu}.
\begin{cor} \label{app to derv cat}
Let $\X$ be a subcategory of $\Db(R)$.
Then for all prime ideals $\p$ of $R$, we have the following triangle equivalence.
\begin{center}
$(\Db(R)/\thick \X)_{\p} \cong
\Db(R_{\p})/\thick(\X_{\p})$.
\end{center}
\end{cor}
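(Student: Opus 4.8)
The plan is to deduce this from \Cref{localiz of ver quot}(3) applied to $\T=\Db(R)$, $\T'=\Db(R_{\p})$, and the triangle $R$-functor $F=-\otimes_{R}^{\mathbf{L}}R_{\p}\colon \Db(R)\to\Db(R_{\p})$, noting that $\Db(R_{\p})$ is indeed $R_{\p}$-linear. By the discussion preceding \Cref{localiz of ver quot}, the only hypothesis that needs checking is that the induced functor $F_{\p}^{1}\colon\Db(R)_{\p}\to\Db(R_{\p})$ is a triangle equivalence, and, again by that discussion, this is in turn equivalent to $F$ being dense together with the canonical map $F(X,Y)_{\p}\colon\Hom_{\Db(R)}(X,Y)_{\p}\to\Hom_{\Db(R_{\p})}(X_{\p},Y_{\p})$ being bijective for all $X,Y\in\Db(R)$.

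For the statement on morphisms I would argue as follows. Since $R$ is noetherian and $X,Y\in\Db(R)$ have bounded finitely generated cohomology, one may choose a bounded-above resolution $P\to X$ by finitely generated projective $R$-modules, so that $\mathbf{R}\Hom_{R}(X,Y)$ is computed by the total Hom complex $\Hom_{R}(P,Y)$; because $Y$ is bounded, each term of this complex is a \emph{finite} direct sum of modules of the form $\Hom_{R}(P^{i},Y^{j})$, and localization commutes with each such finite Hom. Hence $\mathbf{R}\Hom_{R}(X,Y)\otimes_{R}R_{\p}\cong\mathbf{R}\Hom_{R_{\p}}(X_{\p},Y_{\p})$, and taking $\h^{0}$ (using exactness of localization) gives the required bijection, which one checks is the canonical map $F(X,Y)_{\p}$. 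Density of $F$ is the assertion that every bounded complex $Y$ of finitely generated $R_{\p}$-modules is isomorphic to $Z_{\p}$ for some $Z\in\Db(R)$: write $Y^{i}\cong M^{i}\otimes_{R}R_{\p}$ with $M^{i}\in\mod R$, use $\Hom_{R_{\p}}(M^{i}_{\p},M^{i+1}_{\p})=\Hom_{R}(M^{i},M^{i+1})_{\p}$ to represent the differentials of $Y$ by $R$-linear maps after clearing denominators, and then, proceeding inductively through the finitely many spots of the bounded complex, rescale the chosen lifts by suitable elements of $R\setminus\p$ so that consecutive composites vanish exactly; this produces a complex $Z\in\Db(R)$ with $Z_{\p}\cong Y$. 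This last step is routine and well known.

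Granting these two points, $F_{\p}^{1}$ is a triangle equivalence, so \Cref{localiz of ver quot}(3) yields a triangle equivalence $(\Db(R)/\thick_{\Db(R)}\X)_{\p}\simeq\Db(R_{\p})/\thick_{\Db(R_{\p})}(F\X)$; since $F\X=\{X_{\p}\mid X\in\X\}$ we get $\thick_{\Db(R_{\p})}(F\X)=\thick(\X_{\p})$, which is exactly the claimed equivalence. The main obstacle is the verification that $F_{\p}^{1}$ is an equivalence: the morphism part reduces to the standard compatibility of $\mathbf{R}\Hom$ with flat localization for complexes with finitely generated cohomology, while the density part, though entirely routine, requires the small inductive lifting of a bounded complex sketched above; once these are in hand, everything else is an immediate application of the results already established in this section, and the case $\X=\{R\}$ recovers \cite[Corollary 4.4]{Liu}.
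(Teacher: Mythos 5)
Your proposal is correct and follows essentially the same route as the paper: both reduce the statement to \Cref{localiz of ver quot}(3) by checking that $(-)_{\p}\colon\Db(R)\to\Db(R_{\p})$ is dense and induces isomorphisms $\Hom_{\Db(R)}(X,Y)_{\p}\xrightarrow{\sim}\Hom_{\Db(R_{\p})}(X_{\p},Y_{\p})$. The only difference is that the paper simply cites these two facts (to Aihara--Takahashi and Avramov--Foxby), whereas you sketch their proofs directly; your sketches are sound.
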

\begin{proof}
By \cite[Lemma 4.2]{AiharaTakahashi2015} and \cite[Lemma 5.2(b)]{AvramovFoxby1991}, 
the functor $(-)_{\p} : \Db(R) \to \Db(R_{\p})$
is dense and the induced homomorphism
$\Hom_{\Db(R)}(X,Y)_{\p} \to \Hom_{\Db(R_{\p})}(X_{\p},Y_{\p})$
is an isomorphism for all $X,Y \in \Db(R)$.
Hence the induced functor
$\Db(R)_{\p} \to \Db(R_{\p})$ is a triangle equivalence.
Thus by \Cref{localiz of ver quot}(3), 
we obtain the triangle equivalence as desired.
\end{proof}
%%%%%%%%%%%%%%%%%%%%%%%%%%%%%%%%%%%%%%%%%%%%%%%%%%%%%%%%%%%%
Consequently, we obtain a characterization of the closed subset defined by the annihilator of a Verdier quotient of bounded derived category of $\mod R$.
By taking $\{R\}$ as $\X$ in the following corollary, we can recover \cite[Theorem 4.6 and Remark 4.5(2)]{Liu}.
\begin{cor} \label{closedness}
Let $\X$ be a subcategory of $\Db(R)$.
\begin{enumerate}[\rm(1)]
\item 
Suppose that the dimension of $\Db(R)/\thick \X$ is finite. 
Then the following equality holds.
\begin{center}
$\{\p \in \spec R \mid \Db(R_{\p}) \neq \thick (\X_{\p}) \}
= \V(\ann_{R}(\Db(R)/\thick \X))$.
\end{center}
In particular, the set of prime ideals $\p$ of $R$ satisfying the equality 
$\Db(R_{\p}) = \thick (\X_{\p})$ is an open subset of $\spec R$.
\item
Let $X$ be an object in $\Db(R)$.
Then we have the following equality.
\begin{center}
$\{\p \in \spec R \mid X_{\p} \notin \thick(\X_{\p}) \}
= \V(\ann_{R}\End_{\Db(R)/\thick\X}(X))$.
\end{center}
\end{enumerate}
\end{cor}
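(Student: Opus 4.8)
The idea is to reduce both statements to the prime-spectrum level and then apply \Cref{app to derv cat} together with a standard support-theoretic characterization of the variety of an annihilator ideal.

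For part (2), let $\a = \ann_R \End_{\Db(R)/\thick\X}(X)$. The module $\End_{\Db(R)/\thick\X}(X)$ is finitely generated over $R$ --- this requires a small argument, but it follows because $\Hom$ in the Verdier quotient is a localization of $\Hom_{\Db(R)}(X,Y)$ along morphisms, and one can bound it by a finitely generated module; alternatively one works with $\a_n = \ann_R\End_{(\Db(R)/\thick\X)}(X)$ computed after passing to a suitable thick subcategory, but the cleanest route is: for a finitely generated $R$-module $M$ one has $\V(\ann_R M) = \supp M = \{\p \mid M_\p \ne 0\}$. Thus $\p \in \V(\a)$ iff $\End_{\Db(R)/\thick\X}(X)_\p \ne 0$. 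Now by \Cref{app to derv cat} (applied to $\X$) and the isomorphism $\Hom_{(\T/\thick_\T\X)_\p}(X,Y) \cong \Hom_{\T_\p/\thick_{\T'}F\X}(FX,FY)$ established in the displayed commutative diagram preceding \Cref{localiz of ver quot}, we get
$$\End_{\Db(R)/\thick\X}(X)_\p \cong \End_{\Db(R_\p)/\thick(\X_\p)}(X_\p).$$
This $\End$-ring vanishes precisely when $X_\p$ becomes zero in $\Db(R_\p)/\thick(\X_\p)$, i.e. when $X_\p \in \thick(\X_\p)$. Hence $\p \in \V(\a)$ iff $X_\p \notin \thick(\X_\p)$, which is (2). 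If one is worried about finite generation of $\End_{\Db(R)/\thick\X}(X)$, note that by \cite[Lemma 3.1(c)]{Matsui} (used in the excerpt) we have $\Hom_{\T_\p}(X,Y) \cong \Hom_\T(X,Y)_\p$, so the localization identity above is literally $\Hom_\T(X,Y)_\p \cong \Hom_{\T'}(FX,FY)$ after quotienting, and one computes $\V(\ann)$ directly on the $R$-module $\End_{\Db(R)/\thick\X}(X)$ noting it is a subquotient of the finitely generated module governing the localization, so its support equals $\V$ of its annihilator regardless.

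For part (1), the inclusion $\V(\ann_R(\Db(R)/\thick\X)) \subseteq \{\p \mid \Db(R_\p) \ne \thick(\X_\p)\}$ is formal: if $\Db(R_\p) = \thick(\X_\p)$ then by \Cref{app to derv cat} the localization $(\Db(R)/\thick\X)_\p$ is trivial, so every $\End$-ring of every object of $\Db(R)/\thick\X$ localizes to zero at $\p$, forcing $\ann_R(\Db(R)/\thick\X) \not\subseteq \p$. The reverse inclusion is where finiteness of $\dim(\Db(R)/\thick\X)$ enters: write $\T/\thick\X = \langle G\rangle_{m}$ for a single object $G$ and some finite $m$ (possible since the dimension is finite, using \Cref{rmk of dim rad}(4)). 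Then $\ann_R(\Db(R)/\thick\X) \supseteq \ann_R(\End_{\Db(R)/\thick\X}(G))^{m}$ or more precisely $\sqrt{\ann_R(\Db(R)/\thick\X)} = \sqrt{\ann_R \End_{\Db(R)/\thick\X}(G)}$, because an object built from $G$ by $m$ cones-and-summands has its identity annihilated by a power of any element killing $1_G$ (this is the same mechanism as in \Cref{end annihilators}, combined with the octahedral/cone estimates in \cite[Lemma 4.11]{Rouquier} or \cite[Proposition 4.2]{dey2025annihilationcohomologystronggeneration}). Therefore $\V(\ann_R(\Db(R)/\thick\X)) = \V(\ann_R\End_{\Db(R)/\thick\X}(G))$, and by part (2) this equals $\{\p \mid G_\p \notin \thick(\X_\p)\}$. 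Finally $G_\p \notin \thick(\X_\p)$ iff $\Db(R_\p) \ne \thick(\X_\p)$: indeed if $\Db(R_\p) = \thick(\X_\p)$ then trivially $G_\p \in \thick(\X_\p)$, while conversely if $\Db(R_\p) \ne \thick(\X_\p)$ then, since $\Db(R)/\thick\X = \langle G\rangle_m$ implies (localizing via \Cref{app to derv cat} and \Cref{rmk of dim rad}(5)) $\Db(R_\p)/\thick(\X_\p) = \langle G_\p\rangle_m$, nontriviality of the quotient forces $G_\p \notin \thick(\X_\p)$. The openness statement is then immediate since $\V(-)$ of an ideal is closed.

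\textbf{Main obstacle.} The delicate point is the reverse inclusion in (1): passing from "every object's endomorphism ring localizes to zero" to "a single object's endomorphism ring cuts out the same closed set" genuinely needs the finite-dimensionality hypothesis and the quantitative annihilator estimates for objects in a ball $\langle G\rangle_m$. One must be careful that the radical of $\ann_R(\Db(R)/\thick\X)$ --- an a priori infinite intersection $\bigcap_Y \ann_R\End(Y)$ --- collapses to $\sqrt{\ann_R\End(G)}$; this uses that $\ann_R\End(Y) \supseteq (\ann_R\End(G))^{m}$ for all $Y \in \langle G\rangle_m$, which is exactly where a result like \cite[Proposition 4.2]{dey2025annihilationcohomologystronggeneration} (cited in \Cref{ipd}(4)) does the work. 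Everything else is bookkeeping with \Cref{app to derv cat} and localization of $\Hom$.
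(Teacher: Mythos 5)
Your argument is correct in substance but follows a genuinely different route from the paper. After establishing the equivalence $(\Db(R)/\thick\X)_{\p}\cong\Db(R_{\p})/\thick(\X_{\p})$ of \Cref{app to derv cat}, the paper simply quotes two results of Liu (\cite[Proposition 3.11 and Corollary 3.9]{Liu}), which say precisely that, for an $R$-linear triangulated category $\T$ of finite dimension, $\T_{\p}=0$ iff $\p\notin\V(\ann_{R}\T)$, and that an object $X$ vanishes in $\T_{\p}$ iff $\p\notin\V(\ann_{R}\End_{\T}(X))$. You instead reprove these two facts from scratch (single generator $G$ with $\T/\thick\X=\langle G\rangle_m$, the ghost-type estimate $(\ann_R\End(G))^m\subseteq\ann_R\End(Y)$ for $Y\in\langle G\rangle_m$, hence $\sqrt{\ann_R(\T/\thick\X)}=\sqrt{\ann_R\End(G)}$, then reduction to part (2)); this is exactly the mechanism behind Liu's results, so your proof is a self-contained expansion of the paper's citation. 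Two soft spots are worth fixing. First, in (2) the finite-generation worry is a red herring and your fallback justifications do not resolve it: $\Hom$-modules in a Verdier quotient need not be finitely generated, and "subquotient of the module governing the localization" is not a real argument. The correct, hypothesis-free reason is that $\End_{\Db(R)/\thick\X}(X)$ is a unital $R$-algebra, so $\ann_{R}\End(X)=\ann_{R}(1_{X})$ and $\End(X)_{\p}=0$ iff $1_{X}/1=0$ iff some $s\notin\p$ kills $1_{X}$; hence $\V(\ann_{R}\End(X))=\{\p\mid\End(X)_{\p}\neq 0\}$ with no finiteness needed. Second, in (1) you have the two inclusions labelled backwards: the genuinely formal direction is $\{\p\mid\Db(R_{\p})\neq\thick(\X_{\p})\}\subseteq\V(\ann_{R}(\Db(R)/\thick\X))$ (an element $s\in\ann\setminus\p$ kills every identity, so $\T_{\p}=0$), whereas the inclusion you call formal requires passing from "each object has its own $s_{Y}\notin\p$" to "a single $s\notin\p$ works for all objects," and your one-line justification ("forcing $\ann_{R}(\Db(R)/\thick\X)\not\subseteq\p$") is a non sequitur for an infinite intersection of ideals. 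This does not sink the proof, because your subsequent generator argument establishes the equality of the two sets outright, but the mislabelling should be corrected.
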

\begin{proof}
Let $\p$ be a prime ideal of $R$.

(1) By \Cref{app to derv cat} and \cite[Proposition 3.11]{Liu}, we have 
$\Db(R_{\p}) \neq \thick (\X_{\p})$
if and only if
$(\Db(R)/\thick \X)_{\p} \neq 0$
if and only if
$\p \in \V(\ann_{R}(\Db(R)/\thick \X))$.

(2) By \Cref{app to derv cat} and \cite[Corollary 3.9]{Liu}, we have 
$X_{\p} \notin \thick (\X_{\p})$
if and only if
$X\neq 0$ in $(\Db(R)/\thick \X)_{\p}$
if and only if
$\p \in \V(\ann_{R}\End_{\Db(R)/\thick\X}(X))$.
\end{proof}
%%%%%%%%%%%%%%%%%%%%%%%%%%%%%%%%%%%%%%%%%%%%%%%%%%%%%%%%%%%%
A local ring $(R, \m, k)$ is an {\em isolated singularity} if the inclusion relation 
$\Sing R \subseteq \{\m\}$ holds.

In the following example, we provide some applications of \Cref{closedness} when $R$ is a local ring.
\begin{eg} \label{app of clsdness}
Let $(R, \m, k)$ be a commutative noetherian local ring.
\begin{enumerate}[\rm(1)]
\item 
Suppose that the dimension of $\Db(R)/\thick \{k\}$ is finite. If $R$ is not artinian, then one has
\begin{center}
$\V(\ann_{R}(\Db(R)/\thick \{k\}))=\spec R$.
\end{center}
\item
Suppose that the dimension of $\Db(R)/\thick \{k\oplus R\}$ is finite. If $R$ is not an isolated singularity, then one has
\begin{center}
$\V(\ann_{R}(\Db(R)/\thick \{k\oplus R\}))=\Sing R$.
\end{center}
\item
Let $D$ be a dualizing complex for $R$.
Then the subcategory of $\Db(R)$ consisting of complexes with finite injective dimension coincides with $\thick\{D\}$ as the functor 
$\mathbf{R}\!\Hom_{R}(-,D)$
gives a duality 
$\Db(R) \xrightarrow{\cong} \Db(R)$; see \cite[(2.7)]{AvramovFoxby1997} for instance.
Hence for an object $X$ in $\Db(R)$, the ideal 
$\ann_{R} \End_{\Db(R)/\thick \{D\}}(X)$
defines the infinite injective dimension locus of $X$, that is, the set of prime ideals $\p$ of $R$ such that the $R_{\p}$-complex $X_{\p}$ has infinite injective dimension.

In addition, if the dimension of $\Db(R)/\thick \{D\}$ is finite, then one has
\begin{center}
$\V(\ann_{R}(\Db(R)/\thick \{D\}))=\Sing R$
\end{center}
by \Cref{closedness}(1).
However, the duality 
$\mathbf{R}\!\Hom_{R}(-,D) : \Db(R) \xrightarrow{\cong} \Db(R)$
induces a category equivalence
\begin{center}
$\Dsg(R) \cong \Db(R)/\thick \{D\}$
\end{center}
of Verdier quotients of $\Db(R)$.
Hence we have
\begin{center}
$\End_{\Db(R)/\thick \{D\}}(X) \cong 
\End_{\Dsg(R)}(\mathbf{R}\!\Hom_{R}(X,D)
)$, and \\
$\ann_{R}\Dsg(R) = \ann_{R}(\Db(R)/\thick \{D\})$.
\end{center}
Note that the infinite injective dimension locus of $X$ coincides with the infinite projective dimension locus of $\mathbf{R}\!\Hom_{R}(X,D)$.
Thus, the results stated above essentially follow from \cite[Theorem 4.6 and Remark 4.5(2)]{Liu}.
\item
Let $D$ be a dualizing complex for $R$ and 
$X$ an object in $\Db(R)$.
Then the set of prime ideals $\p$ of $R$ such that
$X_{\p} \notin \thick\{R_{\p}\oplus D_{\p}\}$
is a closed subset of $\spec R$.
In addition, if the dimension of $\Db(R)/\thick\{R\oplus D\}$ is finite, 
then the set of prime ideals $\p$ of $R$ such that
$\Db(R_{\p}) \neq \thick\{R_{\p}\oplus D_{\p}\}$
is a closed subset of $\spec R$.
\end{enumerate}
\end{eg}
%%%%%%%%%%%%%%%%%%%%%%%%%%%%%%%%%%%%%%%%%%%%%%%%%%%%%%%%%%%%

%%%%%%%%%%%%%%%%%%%%%%%%%%%%%%%%%%%%%%%%%%%%%%%%%%%%%%%%%%%%
\section{On upper bounds for the dimensions of singularity categories}
The main purpose of this section is to state and prove \Cref{mthm}. 
After that we deduce Corollaries \ref{main thm}, \ref{fin of dim}, \ref{new1}, \ref{mincomp}, \ref{new3}, \ref{new2}, \ref{for calculation}, \ref{cor of dimSing 0}, \ref{thm of Liu}, \ref{cor dimSing 1}, and \ref{contable CM}.
Using the above results, we compute upper bounds for the dimensions of the singularity categories over specific rings.

%%%%%%%%%%%%%%%%%%%%%%%%%%%%%%%%%%%%%%%%%%%%%%%%%%%%%%%%%%%%
First of all, we recall the notion of the restricted flat dimension of a module.
The finiteness of the restricted flat dimension of a module plays an important role in removing the assumption that $R$ is local in \Cref{main thm}.
\begin{dfn} \label{def of Rfd}
Let $M$ be a finitely generated $R$-module.
\begin{enumerate}[\rm(1)]
\item 
We define the {\em (large) restricted flat dimension} of $M$, 
denoted by $\Rfd_R (M)$, as follows.
\begin{center}
$\Rfd_R (M) 
= \sup \{\depth R_{\p}-\depth M_{\p} \mid
\p \in \spec R \}$. 
\end{center}
By using \cite[Theorem 1.1]{AvramovIyengarLipman} and \cite[Proposition 2.2, Theorem 2.4]{ChristensenFoxbyFrankild}, we have 
$\Rfd_R (M) \in \mathbb{Z}_{\ge 0} \cup \{-\infty \}$. 
Note that $\Rfd_R (M) = -\infty$ if and only if $M = 0$.
\item
We denote by $\mu (M)$ the infimum of nonnegative integers $n$ such that $M$ is generated by $n$ elements as an $R$-module.
If $(R,\m,k)$ is a local ring, then $\mu (M)$ coincides with the dimension of 
$M \otimes_R k$ as a $k$-vector space. 
\end{enumerate}
\end{dfn}
%%%%%%%%%%%%%%%%%%%%%%%%%%%%%%%%%%%%%%%%%%%%%%%%%%%%%%%%%%%%
We are ready to prove the main result of this section.

\begin{thm}\label{mthm} Let $\X$ be a resolving subcategory of $\mod R$.  Assume $I$ is an ideal of $R$ such that $\ipd(\X)\subseteq \V(I)$ and $R/\p \in \sw(\X)$ for every $\p \in \V(I)$. Then, $\sw(\X)=\langle \filt_{\mod R}\{R/\p \mid \p\in \V(I)\} \rangle^{\sw(\X)}_{\mu(I)-\grade I+1}$. If, moreover, $I\subseteq \ann_R \sw(\X)$, then $\sw(\X)=\langle \mod R/I \rangle^{\sw(\X)}_{\mu(I)-\grade I+1}$.  
    
\end{thm}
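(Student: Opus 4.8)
The plan is to reduce everything to a statement about building syzygies in $\mod R$, following the philosophy of \Cref{remsw}(2): membership $X\in\langle G\rangle_n^{\sw(\X)}$ is governed by whether a high syzygy of $X$ lies in the abelian-category ball $[H]_n^{\mod R}$. So the first step is to fix an arbitrary object $G\in\sw(\X)$, write $G\cong H[m]$ with $H\in\X$, and try to show $H$ — equivalently $G$ — lies in $\langle\filt_{\mod R}\{R/\p\mid\p\in\V(I)\}\rangle_{\mu(I)-\grade I+1}^{\sw(\X)}$. By \Cref{remsw}(2) and the last sentence of the definition of $\sw(\X)$, it suffices to produce, for some $r\ge 0$, a containment $\syz^r H\in[\Y]_{\mu(I)-\grade I+1}^{\mod R}$ where $\Y=\filt_{\mod R}\{R/\p\mid\p\in\V(I)\}$. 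The point is that $\syz^r H$ becomes an $(\grade I)$-th syzygy-friendly object once $r$ is large: because $\ipd(\X)\subseteq\V(I)$, outside $\V(I)$ the module $H$ has finite projective dimension, so $H_\p$ is free for $\p\not\supseteq I$ and $r\gg0$, i.e. $\supp(\syz^r H)\subseteq\V(I)$ eventually (up to free summands). Then $\syz^r H$ is a finitely generated module supported on $\V(I)$, hence filtered — after enough syzygies — by the $R/\p$ with $\p\in\V(I)$; this is the standard prime-filtration argument, and it already gives the qualitative statement $\sw(\X)=\thick_{\sw(\X)}\{R/\p\mid\p\in\V(I)\}$. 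The quantitative refinement to radius $\mu(I)-\grade I+1$ is the real content.

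For the radius bound I would use Koszul objects on $I$. Choose a generating set $x_1,\dots,x_{\mu(I)}$ of $I$; by \Cref{end annihilators}, if $I\subseteq\ann_R M$ for an object $M$ of an $R$-linear triangulated category then $M$ is a direct summand of the Koszul object $M/\!\!/\xx$, and $M/\!\!/\xx$ is an iterated cone built from $\mu(I)+1$ shifted copies of $M$ — but we need the \emph{grade} to cut this down. The mechanism (this is exactly the engine behind \cite[Corollary 5.8(2)]{bah2} and Liu's argument) is: pick a maximal regular sequence $\yy=y_1,\dots,y_g$ inside $I$ with $g=\grade I$; modding out by $\yy$ is "free" in $\Dsg$ in the sense that the Koszul complex $\k(\yy,R)$ is perfect, so $M/\!\!/\yy\cong M$ in $\Dsg(R)$ up to the already-accounted shifts, and then the \emph{remaining} $\mu(I)-g$ Koszul steps together with one more step for the base object produce a ball of radius $\mu(I)-g+1$. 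Concretely: for $X\in\sw(\X)$ supported on $\V(I)$, a high syzygy $\syz^r X$ is annihilated (on its $\Dsg$-endomorphism ring) by a power of $I$, or better, I'd arrange that some syzygy is directly annihilated by $I$ after localizing considerations — using $\ipd(\X)\subseteq\V(I)$ and \Cref{ipd}(4) which identifies $\V(\ann_R\sw(\X))$ with $\ipd(\X)$-type data — so \Cref{end annihilators} applies and yields the decomposition, and the grade-$g$ regular sequence lets $g$ of the cones be absorbed. Assembling: $X\in\langle\, \syz^r X \,\text{-filtration pieces}\,\rangle$ placed inside $\mu(I)-g+1$ extension-layers, each layer an object of $\langle\Y\rangle^{\sw(\X)}$.

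For the "moreover" clause, assume $I\subseteq\ann_R\sw(\X)$. Then every $M\in\sw(\X)$ already has $I\subseteq\ann_R M$, so \Cref{end annihilators} applies to $M$ itself (no passage to high syzygies needed): $M$ is a direct summand of $M/\!\!/\xx$, which after deleting the perfect sub-Koszul-complex on a maximal regular sequence of length $g=\grade I$ sits in $\langle M\rangle_{\mu(I)-g+1}^{\sw(\X)}$... but we want the center to be $\mod R/I$, not $M$. So instead run the previous paragraph's filtration argument but now note $\syz^r M$, being supported on $\V(I)$ and having $I$ in its annihilator, is (a summand of) a module filtered by $R/\p$'s with $\p\in\V(I)$ that are moreover $R/I$-modules — since $I$ kills it, $\syz^r M$ \emph{is} an $R/I$-module. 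Hence each filtration piece lies in $\mod R/I$ (viewed in $\sw(\X)\subseteq\Dsg(R)$ via the identification in the statement), and the same Koszul/grade bookkeeping gives $M\in\langle\mod R/I\rangle_{\mu(I)-\grade I+1}^{\sw(\X)}$. The reverse inclusion $\langle\mod R/I\rangle_{\mu(I)-\grade I+1}^{\sw(\X)}\subseteq\sw(\X)$ is automatic once one checks $\mod R/I\subseteq\sw(\X)$, which follows from the hypothesis $R/\p\in\sw(\X)$ for all $\p\in\V(I)$ together with $\sw(\X)$ being thick (prime-filtration again).

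The main obstacle I anticipate is making the Koszul-object count interact cleanly with the grade: one must ensure that quotienting by a length-$g$ regular sequence genuinely "costs nothing" in the $\sw(\X)$ (equivalently $\Dsg(R)$) radius count — i.e. that $M/\!\!/\yy$ and $M$ generate each other within radius $1$ in $\sw(\X)$ — and that this is compatible with $M$ not itself being annihilated by $I$ unless one has passed to syzygies. Handling the non-local case is where $\Rfd_R$ (\Cref{def of Rfd}) enters: globally, "$\syz^r$ becomes supported on $\V(I)$ and suitably annihilated" needs a uniform $r$, and finiteness of $\Rfd_R$ of the relevant syzygy modules (a consequence of the Auslander--Buchsbaum-type bound in \cite{AvramovIyengarLipman}) is what supplies it. I expect the local case to be essentially \cite[Corollary 5.8(2)]{bah2} plus Liu's grade trick, and the genuinely new bookkeeping to be the globalization.
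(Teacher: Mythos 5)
Your overall scaffolding (high syzygies controlled by $\Rfd$, Koszul objects on a generating set of $I$, \Cref{end annihilators} to recover $X$ as a summand of $X/\!\!/\xx$, prime filtrations to reach the $R/\p$'s) matches the paper, but the two steps that actually produce the theorem are wrong as you state them. First, you claim that for $r\gg0$ the module $\syz^r H$ is \emph{supported} on $\V(I)$ and hence admits a prime filtration by $R/\p$ with $\p\in\V(I)$. The hypothesis $\ipd(\X)\subseteq\V(I)$ only forces the \emph{non-free locus} of a high syzygy into $\V(I)$; the support of $\syz^r H$ is in general all of $\spec R$ (off $\V(I)$ the localizations are free of positive rank, not zero), so its prime filtration involves primes far outside $\V(I)$. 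Similarly, in the ``moreover'' clause, $I\subseteq\ann_R\sw(\X)$ annihilates $\End_{\Dsg(R)}(\syz^r M)$, i.e.\ $\Ext_R^{\ge 1}(\syz^r M,\mod R)$ --- it does not annihilate the module $\syz^r M$, which is therefore \emph{not} an $R/I$-module. The objects that genuinely are $R/I$-modules (resp.\ $R/J$-modules for $J=(x_1^e,\dots,x_n^e)$) are the Koszul \emph{cohomologies} $\h^i(\xx,X)$: the cohomology of a Koszul complex is always killed by the ideal generated by the sequence. The paper filters those, not the syzygy itself.

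Second, your mechanism for passing from $\mu(I)+1$ to $\mu(I)-\grade I+1$ --- ``modding out by a maximal regular sequence $\yy$ of length $g=\grade I$ costs nothing because $\k(\yy,R)$ is perfect, so $M/\!\!/\yy\cong M$ in $\Dsg(R)$'' --- is false. Perfection of $\k(\yy,R)$ does not make $-\otimes^{\mathbf{L}}_R\k(\yy,R)$ the identity on $\Dsg(R)$: already $M/\!\!/y=\cone(M\xrightarrow{y}M)$ is $M/yM$ (a genuinely different object) when $y$ is $M$-regular, and is $0$ when $y$ acts invertibly on $\End_{\Dsg(R)}(M)$. The correct source of the $-\grade I$ saving is grade sensitivity of the Koszul complex: $\h^i(\xx^e,X)$ vanishes outside $\grade(J,X)-n\le i\le 0$, so $\k(\xx^e,X)$ has at most $n-\grade(J,X)+1$ nonzero cohomologies and hence lies in the ball of that radius centered at $\bigoplus_i\h^i(\xx^e,X)$ by \cite[Lemma 2.1(2)]{DaoTakahashi2015}; one then needs the separate depth estimate $\grade(I,X)\ge\grade I$ for $X=\syz^m M$ with $m\ge\Rfd_R M$ (this is where $\Rfd$ really enters, not merely to make $r$ uniform). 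Without replacing your two steps by these, the argument does not close.
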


\begin{proof}  We will use \Cref{remsw}(2) without further reference. 

Let 
$I = (\xx)$ 
where $\xx = x_{1}, \ldots, x_{n}$ with $n=\mu(I)$.
Let $M\in \X$.

By hypothesis, $\ipd(M)\subseteq \V(I)$. By \cite[3.1 and Proposition 3.10]{dey2025annihilationcohomologystronggeneration} we have $\sqrt I\subseteq \ann_R \Ext_R^{\ge 1}(\syz^m M, \mod R)$ for some $m\ge \Rfd_R M$. Then, there exists $e\ge 1$ such that $(x_1^e,\ldots,x_n^e)\subseteq \ann_R \Ext_R^{\ge 1}(\syz^m M, \mod R)$. Put $J=(x_1^e,\ldots,x_n^e)$. Then, one has $\sqrt J=\sqrt I$.

Put
$X = \Omega_{R} ^{m}M$. Note that $\grade(I,X)=\grade(J,X)$. 
Let $\k(\xx^e, X)$ be the Koszul complex of $x_1^e,...,x_n^e$ on $X$, and $\h^{i}(\xx^e, X)$ its $i$-th cohomology for each 
$i\in \mathbb{Z}$.
Then the grade sensitivity(\cite[Theorem 1.6.17]{BH}) implies that
\begin{center}
$n-\grade(J,X) = -\inf\{i \mid \h^{i}(\xx^e,X) \neq 0\}$.
\end{center}
Hence for each integer $i$ which is bigger than $0$ or smaller than $\grade(J,X)-n$, 
the cohomology $\h^{i}(\xx^e, X)$ vanishes.
Thus by using \cite[Lemma 2.1(2)]{DaoTakahashi2015}, 
we have 
\begin{align*}
\k(\xx^e, X) \in 
 & {\left\langle \displaystyle\bigoplus_{i\in \mathbb{Z}} \h^{i}(\xx^e, X) \right\rangle}_{n-\grade(I,X)+1} ^{\Db(R)} .
\end{align*}

Since $\bigoplus_{i\in \mathbb{Z}} \h^{i}(\xx^e, X)\in \mod R$ is annihilated by $J$, hence it has a finite filtration by cyclic modules $R/\p$ where $\p\in \V(J)=\V(I)$. Thus, $\bigoplus_{i\in \mathbb{Z}} \h^{i}(\xx^e, X)\in \sw(\X)$. Hence, \begin{align*}
\k(\xx^e, X) \in 
 & {\left\langle \displaystyle\bigoplus_{i\in \mathbb{Z}} \h^{i}(\xx^e, X) \right\rangle}_{n-\grade(I,X)+1} ^{\sw(\X)}.
\end{align*}

Now for all prime ideals $\p$ of $R$, 
one has inequalities
\begin{align*}
\depth_{R_{\p}} X_{\p} = 
& \depth_{R_{\p}} (\Omega_R ^{m} M)_{\p}   \\
 \ge & \inf \{ \depth_{R_{\p}} \Omega_{R_{\p}} ^{m} M_{\p}, \, \depth R_{\p} \} \\
 \ge & \inf \{ \depth_{R_{\p}} M_{\p} +m, \, \depth R_{\p} \} \\
 \ge & \inf \{ \depth_{R_{\p}} M_{\p} + \Rfd_{R} M, \, \depth R_{\p} \} \\
 \ge & \depth R_{\p}.
\end{align*}
By taking the infimum of both sides of the above inequality as $\p$ runs through $\V(I)$, we have 
\begin{align*}
\grade(I,X) = 
& \inf \{\depth_{R_{\p}} X_{\p} \mid \p \in \V(I) \} \\
 \ge & \inf \{\depth R_{\p} \mid \p \in \V(I) \} \\
 = & \grade I .
\end{align*}

Thus 
$\k(\xx^e, X)$ belongs to 
$\langle {\bigoplus_{i\in \mathbb{Z}} \h^{i}(\xx^e, X)}\rangle_{\mu(I)-\grade I +1} ^{\sw(\X)}$. 
On the other hand, as $J=(x_1^e,...,x_n^e)\subseteq \ann_{R}\End_{\Dsg(R)}(X)$ by \cite[3.1 and Proposition 4.2]{dey2025annihilationcohomologystronggeneration}, hence 
$X$ is a direct summand of $X/\!\!/\xx = \k(\xx, X)$ in $\Dsg(R)$.
Therefore $X\in \langle {\bigoplus_{i\in \mathbb{Z}} \h^{i}(\xx^e, X)}\rangle_{\mu(I)-\grade I +1} ^{\sw(\X)}$.  Thus, recalling that $\sw(\X)=\langle \X\rangle$, and again the fact that every $R/J$-module is in $\filt_{\mod R}\{R/\p\mid \p\in \V(I)\}$, we get $\sw(\X)=\langle \filt_{\mod R}\{R/\p \mid \p\in \V(I)\} \rangle^{\sw(\X)}_{\mu(I)-\grade I+1}$. 

The proof of the last claim follows the same argument by replacing $\xx^e$ by $\xx$ and $J$ by $I$ everywhere. 
\end{proof}

We record the following remark in regards to the hypothesis of \Cref{mthm}.

\begin{rmk} 
\begin{enumerate}[\rm(1)]
    \item Let $\Phi$ be a closed subset of $\spec R$. Assume $\Phi\subseteq \Sing R$. Consider $\X:=\res\{R/\p\mid \p\in \Phi\}$. Then, $\ipd(\X)=\Phi$ (see \Cref{ipd}), and obviously $R/\p\in \X$ for all $\p\in \Phi$. Thus, with this $\X$ and a defining ideal $I$ of $\Phi$, the assumption of \Cref{mthm} is satisfied.

    \item Let $\X$ be a resolving subcategory of $\mod R$ and $\Phi$ a specialization-closed subset of $\spec R$ such that $\ipd(\X)\subseteq \Phi$ and $\X$ is dominant on $\Phi$ in the sense of \cite[Definition 4.1]{dom}. Then, $R/\p\in \sw(\X)$ for all $\p\in \Phi$. Indeed, let $r:=\Rfd(R/\p)$ and set $M:=\syz^r_R(R/\p)$. Then, $\operatorname{NF}(M)=\ipd(R/\p)\subseteq \V(\p) \subseteq \Phi$. %The first equality follows from Takahashi dlr 10.2(4).
    By hypothesis, $\X$ is dominant on  $\operatorname{NF}(M)$ and $\depth M_{\q}\ge \depth R_{\q}$ for all $\q\in \spec R$. Since $R\in \X$, we get $\syz^r_R(R/\p)=M\in \X$ by \cite[Theorem 4.3]{dom}. Thus, $R/\p\in \sw(\X)$.

    \item The condition $\ipd(\X)\subseteq \V(I)$ is weaker than $I\subseteq \ann_R \sw(\X)$, see \Cref{ipd}(4). 
\end{enumerate} 
\end{rmk}

By restricting \Cref{mthm} to the case where $\X$ is the resolving closure of the modules $R/\p$ with $\p\in\Sing R$, we obtain the following result.
\begin{cor}\label{main thm} Let $R$ be a commutative noetherian ring and 
$I$ an ideal of $R$ such that $\Sing R\subseteq \V(I)$. Then, $\operatorname{D}_{\operatorname{sg}}(R)=\langle \filt_{\mod R}\{R/\p \mid \p\in \V(I)\} \rangle_{\mu(I)-\grade I+1}$.  If, moreover, $I\subseteq \ann_R \Dsg(R)$. 
Then we have 
\begin{center}
$\Dsg(R) = 
{\langle \mod R/I 
\rangle}_{\mu(I)-\grade I+1}$.
\end{center}
In particular, we have the following inequalities when $I\subseteq \ann_R \Dsg(R)$.
\begin{align*}
\dim \Dsg(R) 
& \le (\radius_{\Dsg(R)} (\mod R/I) + 1)(\mu(I)-\grade I+1)-1 \\
& \le (\dim\Db(R/I) + 1)(\mu(I)-\grade I+1)-1.
\end{align*} 
\end{cor}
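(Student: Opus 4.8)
The plan is to obtain this as a direct specialization of \Cref{mthm}. First I would set $\X := \res_{\mod R}\{R/\p \mid \p \in \Sing R\}$, which is a resolving subcategory of $\mod R$ by construction. By \Cref{ipd}(2) and \Cref{ipd}(3) (using that $\Sing R$ is specialization-closed by \Cref{ipd}(1)), we have $\ipd(\X) = \ipd(\{R/\p \mid \p \in \Sing R\}) = \Sing R$, so the hypothesis $\Sing R \subseteq \V(I)$ gives exactly $\ipd(\X) \subseteq \V(I)$. Next I would verify the remaining hypothesis of \Cref{mthm}, namely that $R/\p \in \sw(\X)$ for every $\p \in \V(I)$; this follows from \Cref{remsw}(1), which identifies $\sw(\res_{\mod R}\{R/\p \mid \p \in \Phi\})$ with $\operatorname{D}^{\Phi}_{\operatorname{sg}}(R)$ for specialization-closed $\Phi$, together with $\sw(\mod R) = \Dsg(R)$ — one should observe that $\V(I)$ contains $\Sing R$ and is specialization-closed, so the free module and the various $R/\p$ land in the right place. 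Actually the cleanest route is: since $\Sing R \subseteq \V(I)$, every $R/\p$ with $\p \in \V(I)$ already lies in $\mod R$ and hence in $\Dsg(R) = \sw(\mod R)$; but to land it inside $\sw(\X)$ one must argue that $R/\p$ is built from the generators of $\X$, which is precisely what \Cref{remsw}(1) supplies once we note $\V(I) \supseteq \Sing R$ forces the nonfree loci to be controlled. With both hypotheses of \Cref{mthm} checked, and using $\sw(\X) = \Dsg(R)$, the first equality
$$\Dsg(R) = \langle \filt_{\mod R}\{R/\p \mid \p \in \V(I)\}\rangle_{\mu(I) - \grade I + 1}$$
follows immediately; and when additionally $I \subseteq \ann_R \Dsg(R) = \ann_R \sw(\X)$ (the identification again by \Cref{ipd}(4)), the second part of \Cref{mthm} yields $\Dsg(R) = \langle \mod R/I\rangle_{\mu(I) - \grade I + 1}$, noting that every $R/I$-module lies in $\filt_{\mod R}\{R/\p \mid \p \in \V(I)\}$ so the two right-hand sides agree.

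It remains to deduce the two displayed inequalities from the equality $\Dsg(R) = \langle \mod R/I\rangle_{\mu(I) - \grade I + 1}$. For the first inequality, set $r := \radius_{\Dsg(R)}(\mod R/I)$, so by definition there exists $G \in \Dsg(R)$ with $\mod R/I \subseteq \langle G\rangle_{r+1}^{\Dsg(R)}$. Feeding this into the ball of radius $\mu(I) - \grade I + 1$ and using the standard subadditivity of radii under the $\diamond$-operation — concretely $\langle \langle G\rangle_{a}\rangle_{b} \subseteq \langle G\rangle_{ab}$, which is a routine induction from \Cref{rmk of dim rad}(2),(3) — one obtains
$$\Dsg(R) = \langle \mod R/I\rangle_{\mu(I)-\grade I+1} \subseteq \langle G\rangle_{(r+1)(\mu(I)-\grade I+1)}^{\Dsg(R)},$$
whence $\dim \Dsg(R) \le (r+1)(\mu(I) - \grade I + 1) - 1$, which is the claim. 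The second inequality is then immediate from $\radius_{\Dsg(R)}(\mod R/I) \le \dim \Db(R/I)$, which is recorded explicitly at the end of \Cref{rmk of dim rad}(7).

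The step I expect to require the most care is the verification that $R/\p \in \sw(\X)$ for all $\p \in \V(I)$ — i.e. that these cyclic modules genuinely lie in the Spanier--Whitehead category of $\X = \res_{\mod R}\{R/\p \mid \p \in \Sing R\}$ and not merely in $\Dsg(R)$. The point is that a prime in $\V(I) \setminus \Sing R$ gives $R/\p$ of finite projective dimension on a punctured neighborhood, so $R/\p$ becomes a shift of an object of $\X$ in $\Dsg(R)$ only after applying the syzygy-and-localization bookkeeping of \Cref{syz and localization} and \Cref{remsw}(2); the invocation of \Cref{remsw}(1) packages exactly this, provided one has correctly observed that $\V(I)$ is specialization-closed and contains $\Sing R$. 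Everything else is bookkeeping: matching the hypotheses of \Cref{mthm}, the identification $\sw(\X) = \Dsg(R)$, and the multiplicativity of ball radii.
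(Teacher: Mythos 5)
Your proposal is correct and takes essentially the same route as the paper: specialize \Cref{mthm} to $\X=\res_{\mod R}\{R/\p \mid \p\in \Sing R\}$, for which $\ipd(\X)=\Sing R$ and $\sw(\X)=\Dsg(R)$. The only difference is that the paper settles the hypothesis ``$R/\p\in\sw(\X)$ for all $\p\in\V(I)$'' in one stroke by citing \cite[Corollary 4.3]{kos} for the equality $\sw(\X)=\Dsg(R)$ (after which the condition is automatic), whereas you reconstruct this identification more laboriously from \Cref{remsw}; your concluding multiplicativity-of-balls argument for the two inequalities is the standard one and is fine.
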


\begin{proof} In \Cref{mthm}, let $\X:=\res\{R/\p\mid \p\in \Sing R\}$. Then, we are done in view of $\sw(\X)=\Dsg(R)$ (see \cite[Corollary 4.3]{kos}) and  $\ipd(\X)= \Sing R$. 
\end{proof}

As a consequence of \Cref{main thm}, the finiteness of the dimension of the singularity category of $R$ can be summarized in the following form.
\begin{cor} \label{fin of dim}
The following conditions are equivalent.
\begin{enumerate}[\rm(a)]
\item 
The dimension of $\Dsg(R)$ is finite.
\item
For all ideals $I$ of $R$ contained in $\ann_R \Dsg(R)$, 
the radius of $\mod R/I$ in $\Dsg(R)$ is finite.
\item
The radius of $\mod (R/\ann_R \Dsg(R))$ in $\Dsg(R)$ is finite.
\item
There exists an ideal $I$ of $R$ contained in $\sqrt{\ann_R \Dsg(R)}$ such  that
the radius of $\mod R/I$ in $\Dsg(R)$ is finite.
\end{enumerate}
In particular, if there exists an ideal $I$ of $R$ contained in $\sqrt{\ann_R \Dsg(R)}$ such that the dimension of $\Db(R/I)$ is finite, then the dimension of $\Dsg(R)$ is finite.
\end{cor}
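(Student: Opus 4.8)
The plan is to prove the cyclic chain of implications $(a)\Rightarrow(b)\Rightarrow(c)\Rightarrow(d)\Rightarrow(a)$, and then to read off the final ``in particular'' assertion directly from the implication $(d)\Rightarrow(a)$.

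I would dispose of the three formal implications first. For $(a)\Rightarrow(b)$ I would only use the inequality $\radius_{\Dsg(R)}\X\le\dim\Dsg(R)$, valid for every subcategory $\X$ of $\Dsg(R)$ by \Cref{rmk of dim rad}(6); so if $\dim\Dsg(R)$ is finite, then $\radius_{\Dsg(R)}(\mod R/I)$ is finite for \emph{every} ideal $I$, a fortiori for those contained in $\ann_R\Dsg(R)$. For $(b)\Rightarrow(c)$ I would apply $(b)$ to the ideal $I=\ann_R\Dsg(R)$. And $(c)\Rightarrow(d)$ is immediate, since $\ann_R\Dsg(R)\subseteq\sqrt{\ann_R\Dsg(R)}$, so $(c)$ is literally the instance of $(d)$ with $I=\ann_R\Dsg(R)$.

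The real content is $(d)\Rightarrow(a)$. Starting from an ideal $I=(x_1,\dots,x_n)\subseteq\sqrt{\ann_R\Dsg(R)}$ with $t:=\radius_{\Dsg(R)}(\mod R/I)<\infty$, fix $G\in\Dsg(R)$ with $\mod R/I\subseteq\langle G\rangle_{t+1}^{\Dsg(R)}$, choose $e\ge1$ with each $x_i^e\in\ann_R\Dsg(R)$, and set $J=(x_1^e,\dots,x_n^e)$. Then $J\subseteq\ann_R\Dsg(R)$, $J\subseteq I$, and a pigeonhole count on monomials gives $I^N\subseteq J$ with $N=n(e-1)+1$ (so in particular $\sqrt J=\sqrt I$). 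Since $J\subseteq\ann_R\Dsg(R)$ we have $\Sing R\subseteq\V(\ann_R\Dsg(R))\subseteq\V(J)$ by \Cref{ipd}, so \Cref{main thm} applies to $J$ and yields $\dim\Dsg(R)\le(\radius_{\Dsg(R)}(\mod R/J)+1)(\mu(J)-\grade J+1)-1$; it therefore suffices to bound $\radius_{\Dsg(R)}(\mod R/J)$. For this I would take any $M\in\mod R/J$ and run down its $I$-adic filtration $M\supseteq IM\supseteq\dots\supseteq I^{N}M=0$ (the last term vanishes because $I^N\subseteq J$ kills $M$): each subquotient $I^jM/I^{j+1}M$ is killed by $I$, hence lies in $\mod R/I\subseteq\langle G\rangle_{t+1}^{\Dsg(R)}$, and each short exact sequence $0\to I^{j+1}M\to I^jM\to I^jM/I^{j+1}M\to0$ of finitely generated $R$-modules becomes an exact triangle in $\Dsg(R)$. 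Combining these $N$ triangles via the ball estimate $\langle G\rangle_r\diamond\langle G\rangle_s\subseteq\langle G\rangle_{r+s}$ (an induction on $s$ using associativity of $\diamond$, \Cref{rmk of dim rad}(3)), we obtain $M\in\langle G\rangle_{N(t+1)}^{\Dsg(R)}$, whence $\radius_{\Dsg(R)}(\mod R/J)\le N(t+1)-1<\infty$, and $(a)$ follows.

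For the ``in particular'' clause I would observe that an ideal $I\subseteq\sqrt{\ann_R\Dsg(R)}$ with $\dim\Db(R/I)<\infty$ satisfies $\radius_{\Dsg(R)}(\mod R/I)\le\radius_{\Db(R/I)}(\mod R/I)\le\dim\Db(R/I)<\infty$ by \Cref{rmk of dim rad}(6) and (7), so condition $(d)$ holds and $\dim\Dsg(R)<\infty$ by $(d)\Rightarrow(a)$. The main obstacle is precisely this step $(d)\Rightarrow(a)$: the hypothesis only places $I$ inside $\sqrt{\ann_R\Dsg(R)}$, so it cannot be fed directly into \Cref{main thm}; replacing $I$ by the power ideal $J\subseteq\ann_R\Dsg(R)$ is harmless for invoking \Cref{main thm}, but one must then control the unavoidable growth of the radius in passing from $\mod R/I$ to $\mod R/J$, and the bounded finite-filtration estimate above is what keeps this growth to a fixed multiplicative constant $N$.
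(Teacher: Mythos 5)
Your proposal is correct and follows essentially the same route as the paper: the formal implications are handled identically, and for $(d)\Rightarrow(a)$ the paper likewise passes to a power-type ideal inside $\ann_R\Dsg(R)$ (it uses $I^e$ where you use $J=(x_1^e,\dots,x_n^e)$, an immaterial difference), invokes \Cref{main thm} for that ideal, and transfers the finite radius from $\mod R/I$ via the finite $I$-adic filtration, i.e.\ $\mod R/I^{e}\subseteq\langle\mod R/I\rangle_{e}^{\Dsg(R)}$. The ``in particular'' clause is deduced exactly as you do, from $\radius_{\Dsg(R)}(\mod R/I)\le\dim\Db(R/I)$.
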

\begin{proof}
Since the inequality
$\dim \Dsg(R) \ge \radius_{\Dsg(R)}(\mod R/I)$
holds for all ideals $I$ of $R$, we get the implication (a)$\Rightarrow$(b). 
The implications (b)$\Rightarrow$(c)$\Rightarrow$(d) are trivial.
Assume that condition (d) holds. Then there exists an integer $e\ge0$ such that $I^{e}\subseteq\ann_R \Dsg(R)$. Combining \Cref{main thm} with the fact that $\mod R/I^{e}\subseteq \langle\mod R/I\rangle^{\Dsg(R)}_{e}$, we obtain (a).
\end{proof}

Now we give some more consequences of \Cref{mthm} that provide sufficient conditions for $\sw(\X)$ to possess a classical generator.
We denote by $\Min R$ the set of minimal prime ideals of $R$.

\begin{cor}\label{new1} Let $\X$ be a resolving subcategory of $\mod R$. Let $I$ be an ideal of $R$ such that $\ipd(\X)\subseteq \V(I)$.  Assume $R/\p\in \sw(\X)$ for every $\p\in V(I)$. If $\Dsg(R/\p)$ has classical generator for every $\p\in \Min(R/I)$, then $\sw(\X)$ has classical generator.  
\end{cor}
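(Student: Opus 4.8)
The plan is to reduce to a finite collection of rings $R/\mathfrak{p}$ with $\mathfrak{p}\in\Min(R/I)$ and combine the classical generators there with the control on $\sw(\X)$ provided by \Cref{mthm}. First I would invoke \Cref{mthm}: since $\ipd(\X)\subseteq\V(I)$ and $R/\mathfrak{p}\in\sw(\X)$ for every $\mathfrak{p}\in\V(I)$, we obtain
\[
\sw(\X)=\bigl\langle \filt_{\mod R}\{R/\mathfrak{p}\mid \mathfrak{p\in\V(I)}\}\bigr\rangle^{\sw(\X)}_{\mu(I)-\grade I+1}.
\]
Because the ball of a fixed finite radius centered at an extension-closed subcategory is built from finitely many extension steps, it suffices to produce a single object $G\in\sw(\X)$ with $\filt_{\mod R}\{R/\mathfrak{p}\mid \mathfrak{p}\in\V(I)\}\subseteq\thick_{\sw(\X)}(G)$; then $\sw(\X)=\thick_{\sw(\X)}(G)$ as well, since a thick subcategory is closed under the operations $\langle-\rangle_r$.

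Next I would analyze the extension-closed subcategory $\filt_{\mod R}\{R/\mathfrak{p}\mid\mathfrak{p}\in\V(I)\}$. Every module $M$ in it has a finite filtration with subquotients of the form $R/\mathfrak{p}$, $\mathfrak{p}\in\V(I)$, so in $\Dsg(R)$ (hence in $\sw(\X)$, by the identification of $\langle-\rangle_n$ balls there) such $M$ lies in $\langle\{R/\mathfrak{p}\mid\mathfrak{p}\in\V(I)\}\rangle_{\ell}^{\sw(\X)}$ for some finite $\ell$ depending on the filtration length. More usefully, each $R/\mathfrak{p}$ with $\mathfrak{p}\in\V(I)$ admits a prime filtration whose subquotients are $R/\mathfrak{q}$ with $\mathfrak{q}\supseteq\mathfrak{p}$; among these $\mathfrak{q}$ one can, by noetherian induction on $\V(I)$, reduce to the minimal primes: every $R/\mathfrak{q}$ with $\mathfrak{q}\in\V(I)$ lies in $\thick_{\sw(\X)}$ of $\bigoplus_{\mathfrak{p}\in\Min(R/I)}R/\mathfrak{p}$, because over $R/\mathfrak{p}$ one can filter $R/\mathfrak{q}$ (a module over a domain quotient containing $\mathfrak p$) using the module $R/\mathfrak q$ itself together with torsion/torsion-free arguments, peeling off copies of $R/\mathfrak{p}$. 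I would make this precise via the standard prime-filtration / noetherian-induction argument, noting $\Min(R/I)$ is finite.

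It then remains to show each $R/\mathfrak{p}$, $\mathfrak{p}\in\Min(R/I)$, lies in a finite-radius ball centered at one object of $\sw(\X)$. By hypothesis $\Dsg(R/\mathfrak{p})=\thick_{\Dsg(R/\mathfrak{p})}(G_{\mathfrak{p}})$ for some $G_{\mathfrak{p}}$; pushing forward along the exact functor $\Dsg(R/\mathfrak p)\to\Dsg(R)$ (induced by restriction of scalars, using that $R/\mathfrak p$ has finite projective dimension as $R/\mathfrak p$-module exactly on the regular locus — more carefully, via $-\otimes^{\mathbf L}_{R/\mathfrak p}R/\mathfrak p$ and \Cref{rmk of dim rad}(5),(7)) and using \Cref{rmk of dim rad}(5) that triangle functors send $\langle-\rangle_n$ into $\langle-\rangle_n$, the image of $G_\mathfrak p$ generates (classically) the essential image of $\Dsg(R/\mathfrak p)$ in $\Dsg(R)$; in particular $R/\mathfrak p$ and all $R/\mathfrak q$ with $\mathfrak q\in\V(\mathfrak p)$ lie there. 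Taking $G:=\bigoplus_{\mathfrak p\in\Min(R/I)}(\text{image of }G_\mathfrak p)$, a single object of $\sw(\X)$ since $\Min(R/I)$ is finite and each image lies in $\sw(\X)$ (as $R/\mathfrak p\in\sw(\X)$ and $\sw(\X)$ is thick), and combining with the two reductions above, we get $\sw(\X)=\thick_{\sw(\X)}(G)$.

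The main obstacle I anticipate is the second step: carefully checking that the module-level prime filtrations translate into the triangulated category $\sw(\X)$ with the right finiteness, i.e. that $R/\mathfrak q$ for $\mathfrak q\in\V(I)$ genuinely lands in $\thick_{\sw(\X)}$ of the finite sum over $\Min(R/I)$, rather than merely in $\Dsg(R)$. This requires using \Cref{remsw}(2) to identify $\thick$-membership and $\langle-\rangle_n$-membership in $\sw(\X)$ with the corresponding statements in $\Dsg(R)$ and in $\mod R$ via syzygies, together with the closure of $\sw(\X)$ under direct summands; the bookkeeping of which primes contribute and in what order to run the noetherian induction is where the care is needed, but no new idea beyond \Cref{mthm} and \Cref{rmk of dim rad}(5) should be required.
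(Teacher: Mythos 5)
Your overall skeleton (apply \Cref{mthm} to reduce to the subcategory $\filt_{\mod R}\{R/\p\mid\p\in\V(I)\}$, then generate that from the classical generators of the quotients at the minimal primes) is the same as the paper's, but two steps of your argument as written are not correct. First, your second step claims that every $R/\q$ with $\q\in\V(I)$ lies in $\thick_{\sw(\X)}\bigl(\bigoplus_{\p\in\Min(R/I)}R/\p\bigr)$ by ``peeling off copies of $R/\p$.'' This is false: a prime filtration of $R/\q$ has subquotients $R/\q'$ with $\q'\supseteq\q$, so the filtration runs \emph{away} from the minimal primes, and for $\q\supsetneq\p$ the torsion $R/\p$-module $R/\q$ admits no filtration by copies of $R/\p$. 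Indeed, if this claim were true the corollary would hold with no hypothesis on $\Dsg(R/\p)$ at all, since $\bigoplus_{\p\in\Min(R/I)}R/\p$ would already be a classical generator. Second, the functor $\Dsg(R/\p)\to\Dsg(R)$ you invoke does not exist: restriction of scalars does not send perfect $R/\p$-complexes to perfect $R$-complexes (the object $R/\p$ itself is zero in $\Dsg(R/\p)$ but generally nonzero in $\Dsg(R)$), so nothing is induced on singularity categories.

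Both defects are repairable along the lines the paper actually follows, and your third step almost does so: from $\Dsg(R/\p)=\thick(G_\p)$ one first deduces that $\Db(R/\p)$ has a classical generator $H_\p$ (a lift of $G_\p$ together with $R/\p$); the honest functor is then $\Db(R/\p)\to\Db(R)\to\Dsg(R)$, under which the image of $\mod R/\p$ --- hence every $R/\q$ with $\q\supseteq\p$ --- lands in $\thick_{\Dsg(R)}(H_\p)$. This makes your step two unnecessary rather than merely unproved. The paper instead passes through the statement that $\mod R/I$ itself has a classical generator $G$ (via \cite[Theorem 1]{KS} and \cite[Lemma 2.4]{viet}, whose noetherian induction is exactly the dévissage your step two gestures at but gets backwards), and then concludes with \Cref{mthm}, \Cref{remsw} and \cite[Proposition 5.3]{DaoTakahashiradius} that $G\oplus I$ generates $\sw(\X)$; you would also need to check, more carefully than by citing thickness of $\sw(\X)$ and $R/\p\in\sw(\X)$ alone, that your candidate generator actually lies in $\sw(\X)$ (this uses that every finitely generated $R/\p$-module belongs to $\filt_{\mod R}\{R/\q\mid\q\in\V(I)\}\subseteq\sw(\X)$).
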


\begin{proof} It follows from hypothesis that $\Db( R/\p)$ has classical generator for every $\p\in \Min(R/I)$. It then follows from \cite[Theorem 1]{KS} and \cite[Lemma 2.4]{viet} that $\mod R/I$ has a classical generator, say $G$. Since $R/\p\in \mod R/I$ for every $\p \in \V(I)$, hence by \Cref{mthm}, \Cref{remsw} and \cite[Proposition 5.3]{DaoTakahashiradius}, $ G \oplus   I$ is a classical generator for $\sw(\X)$.  
\end{proof} 

In view of \cite[Theorem 1]{KS}, our next result \Cref{mincomp} refines \cite[Lemma 2.4]{viet}. %For J-$2$ rings, see \cite{viet}.

\begin{cor}\label{mincomp} Let $I$ be an ideal of $R$ such that $\Sing R\subseteq \V(I)$. If $\Dsg(R/\p)$ has a classical generator for all $\p \in \Min(R/I)$, then $\Dsg(R)$ has a classical generator. In particular, if $R/I$ is a J-$2$ ring, then $\Dsg(R)$ has a classical generator. 
\end{cor}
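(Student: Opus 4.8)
The plan is to obtain \Cref{mincomp} as the special case of \Cref{new1} in which $\X$ is the resolving closure of the residue fields at the singular points. Concretely, I would set $\X := \res_{\mod R}\{R/\p \mid \p \in \Sing R\}$. For this choice one has $\sw(\X) = \Dsg(R)$ by \cite[Corollary 4.3]{kos} (exactly as in the proof of \Cref{main thm}) and $\ipd(\X) = \Sing R$ by \Cref{ipd}. Hence the standing hypothesis $\Sing R \subseteq \V(I)$ is precisely the hypothesis $\ipd(\X) \subseteq \V(I)$ of \Cref{new1}, while the remaining hypothesis of \Cref{new1}, that $R/\p \in \sw(\X)$ for each $\p \in \V(I)$, holds automatically because $\sw(\X) = \Dsg(R)$ already contains the image of every cyclic module $R/\p$ under $\mod R \to \Db(R)\to \Dsg(R)$. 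Feeding this into \Cref{new1} yields at once: if $\Dsg(R/\p)$ has a classical generator for every $\p \in \Min(R/I)$, then $\Dsg(R) = \sw(\X)$ has a classical generator. This proves the first assertion.

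For the ``in particular'' clause I would reduce to the theorem of Iyengar and Takahashi \cite{viet} recalled in the introduction, namely that the singularity category of a J-$2$ ring admits a classical generator. The only point to check is that each $R/\p$ with $\p \in \Min(R/I)$ inherits the J-$2$ property from $R/I$; but $R/\p \cong (R/I)/(\p/I)$ is a finitely generated $R/I$-algebra, and the J-$2$ condition passes to finitely generated algebras, so $R/\p$ is J-$2$ and hence $\Dsg(R/\p)$ has a classical generator. The first assertion then applies and produces a classical generator for $\Dsg(R)$.

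I do not expect any substantial obstacle here: the entire weight of the statement rests on \Cref{mthm} and its corollary \Cref{new1}. The only points requiring a moment's care are the two ``automatic'' verifications above --- that the hypothesis $R/\p \in \sw(\X)$ is vacuous for this particular choice of $\X$, and that the J-$2$ property is stable under passage to a quotient by a (minimal) prime --- both of which are routine.
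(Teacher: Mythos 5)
Your proposal is correct and follows the paper's own proof essentially verbatim: the same choice $\X=\res_{\mod R}\{R/\p\mid \p\in\Sing R\}$, the same identifications $\sw(\X)=\Dsg(R)$ and $\ipd(\X)=\Sing R$, and the same reduction of the J-$2$ case to the fact that each $R/\p$ with $\p\in\Min(R/I)$ is a J-$2$ ring (the paper cites \cite[Proposition 2.8]{viet} for the resulting classical generator of $\Dsg(R/\p)$). No gaps.
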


\begin{proof} In \Cref{new1}, let $\X:=\res\{R/\p\mid \p\in \Sing R\}$. Then, we are done in view of $\sw(\X)=\Dsg(R)$ (see \cite[Corollary 4.3]{kos}) and  $\ipd(\X)= \Sing R$.
For the last part of the claim now, notice that the hypothesis implies $R/\p$ is J-$2$ ring for each $\p\in\Min(R/I)$, and thus each $\Dsg(R/\p)$ has a classical generator (see \cite[Proposition 2.8]{viet}) which concludes the proof. 
\end{proof}

As a consequence of \Cref{mincomp} we can refine \cite[Corollary 2.7]{viet}. We recall that by definition, $\operatorname{Reg}R$ is the complement of $\Sing R$ in $\spec(R)$.  

\begin{cor}\label{new3} Let $I$ be an ideal of $R$ such that $\Sing R\subseteq \V(I)$. If $\operatorname{Reg}(R/\p)$ contains a nonempty open subset  for every $\p\in \V(I)$, then $\Sing R$ is a closed subset. 
\end{cor}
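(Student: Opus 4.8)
The plan is to deduce the statement from \Cref{mincomp}, via the following intermediate claim: for every prime ideal $\p\in\V(I)$, the singularity category $\Dsg(R/\p)$ has a classical generator. Granting this, it holds in particular for every $\p\in\Min(R/I)$, so \Cref{mincomp} (applied to $R$ and $I$, using $\Sing R\subseteq\V(I)$) produces a classical generator $G$ of $\Dsg(R)$. I would then extract closedness of $\Sing R$ as follows: pick $X\in\Db(R)$ whose image in $\Dsg(R)=\Db(R)/\thick\{R\}$ is $G$. For a prime $\p$ of $R$, if $R_\p$ is regular then $X_\p\in\thick\{R_\p\}$ (everything in $\Db(R_\p)$ is perfect), while if $R_\p$ is not regular then $\Dsg(R_\p)\neq 0$, and since $G$ classically generates $\Dsg(R)$ its image in $\Dsg(R)_\p\cong\Dsg(R_\p)$ (the equivalence being \Cref{app to derv cat} with $\X=\{R\}$) classically generates $\Dsg(R_\p)$, hence is nonzero, i.e. $X_\p\notin\thick\{R_\p\}$. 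Thus $\Sing R=\{\p\in\spec R\mid X_\p\notin\thick\{R_\p\}\}=\V(\ann_R\End_{\Dsg(R)}(G))$ by \Cref{closedness}(2), which is a closed subset.

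It remains to prove the claim, by Noetherian induction. If it fails, the collection of closed sets $\V(\p)$ with $\p\in\V(I)$ prime and $\Dsg(R/\p)$ lacking a classical generator is nonempty, and since $\spec R$ is a noetherian space it has a minimal element $\V(\p)$. By hypothesis $\operatorname{Reg}(R/\p)$ contains a nonempty open subset; its closed complement is some $\V(J/\p)\subsetneq\spec(R/\p)$ with $\Sing(R/\p)\subseteq\V(J/\p)$, and as $R/\p$ is a domain this forces $J/\p\neq 0$, i.e. $\p\subsetneq J$. Every $\q\in\Min(R/J)$ then satisfies $\q\supsetneq\p$ and $\q\supseteq I$, so $\V(\q)\subsetneq\V(\p)$ and $\q\in\V(I)$; by minimality of $\V(\p)$, each $\Dsg(R/\q)$ has a classical generator. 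Applying \Cref{mincomp} to the ring $R/\p$ with the ideal $J/\p$ (whose vanishing locus contains $\Sing(R/\p)$) shows $\Dsg(R/\p)$ has a classical generator, contradicting the choice of $\p$; hence the claim.

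The part I expect to require the most care is setting up this induction so that the hypothesis ``$\operatorname{Reg}(R/\p)$ contains a nonempty open subset'' is invoked exactly where it is needed, namely to guarantee the strict drop $\V(J/\p)\subsetneq\V(\p)$ together with the fact that the minimal primes over $J$ still lie in $\V(I)$, so the inductive hypothesis applies to them; apart from this, the argument is a repeated use of \Cref{mincomp} and of the already-established \Cref{closedness}(2). It is also worth noting that only the trivial implication ``$R_\p$ regular $\Rightarrow$ $(R/\p)_\p$ is a field'' is used, since regularity of $R_\p$ and of the quotient $(R/\p)_\p$ are unrelated in general, so no converse of this kind is being asserted.
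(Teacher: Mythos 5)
Your proof is correct, and its overall strategy coincides with the paper's: establish classical generators for the singularity categories of the relevant quotients, feed this into \Cref{mincomp} to get a classical generator of $\Dsg(R)$, and then deduce closedness of $\Sing R$ from the existence of that generator. The difference is that the paper outsources both ingredients to \cite{viet}: it invokes \cite[Theorem 1.1]{viet} to get that $\Dsg(R/J)$ has a classical generator for every $J\supseteq I$, and \cite[Lemma 2.6]{viet} for the implication ``classical generator $\Rightarrow$ $\Sing R$ closed.'' You instead re-derive both from results already proved in the paper: your Noetherian induction on the $\V(\p)$, $\p\in\V(I)$, with \Cref{mincomp} as the inductive step, is essentially an inlined proof of the needed case of \cite[Theorem 1.1]{viet} (the key points --- that a nonempty open subset of the irreducible space $\spec(R/\p)$ contains the generic point, so $\Sing(R/\p)\subseteq\V(J/\p)$ with $J\supsetneq\p$, and that the minimal primes over $J$ remain in $\V(I)$ --- are handled correctly); and your closedness argument, identifying $\Sing R$ with $\V(\ann_R\End_{\Dsg(R)}(G))$ via \Cref{app to derv cat} and \Cref{closedness}(2), is a self-contained substitute for \cite[Lemma 2.6]{viet}. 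What your version buys is independence from the external citations at the cost of length; what the paper's version buys is brevity. No gaps.
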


\begin{proof} Since $\V(I)=\spec(R/I)$, we get, by \cite[Theorem 1.1]{viet}, $\Dsg(R/J)$ has a classical generator for every ideal $J\supseteq I$. Thus, $\Dsg(R)$ has a classical generator by \Cref{mincomp}, and so $\Sing R$ is closed by \cite[Lemma 2.6]{viet}. 
\end{proof}

The following result gives a sufficient condition for $\sw(\X)$ to be of finite dimension, in terms of strong generation in the module category.
\begin{cor}\label{new2} Let $\X$ be a resolving subcategory of $\mod R$. Let $I$ be an ideal of $R$ such that $I\subseteq \sqrt{\ann_R \sw(\X)}$.  Assume $R/\p\in \sw(\X)$ for every $\p\in \V(I)$.
If  for every $\p\in\Min(R/I)$, $\mod R/\p$ has a strong generator (in the sense of \cite[Section 4]{IyengarTakahashi2016}), then the dimension of $\sw(\X)$ is finite.
\end{cor}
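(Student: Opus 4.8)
The plan is to deduce this from \Cref{mthm} along the lines of the proofs of \Cref{new1} and \Cref{mincomp}, but keeping track of uniform bounds so as to bound the dimension rather than merely produce a classical generator. First, since $I$ is finitely generated and $I\subseteq\sqrt{\ann_R\sw(\X)}$, there is an integer $e\ge 1$ with $I^e\subseteq\ann_R\sw(\X)$; put $I':=I^e$. As $\V(I')=\V(I)$ and $\Min(R/I')=\Min(R/I)$, the hypotheses ``$R/\p\in\sw(\X)$ for all $\p\in\V(I)$'' and ``$\mod R/\p$ has a strong generator for all $\p\in\Min(R/I)$'' hold verbatim with $I'$ in place of $I$, and $\ipd(\X)\subseteq\V(\ann_R\sw(\X))\subseteq\V(I')$ by \Cref{ipd}(4). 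Hence \Cref{mthm} applies to $I'$, including its ``moreover'' clause (as $I'\subseteq\ann_R\sw(\X)$), and yields $\sw(\X)=\langle\mod R/I'\rangle_{N}^{\sw(\X)}$ with $N:=\mu(I')-\grade I'+1<\infty$. Consequently it suffices to find a single object $G\in\sw(\X)$ and an integer $r<\infty$ with $\mod R/I'\subseteq\langle G\rangle_r^{\sw(\X)}$: for then $\sw(\X)=\langle\mod R/I'\rangle_N^{\sw(\X)}\subseteq\langle G\rangle_{rN}^{\sw(\X)}$, so $\dim\sw(\X)\le rN-1<\infty$.

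To produce such a $G$ I would first work over the quotient ring $R/I'$. By hypothesis $\mod R/\p$ has a strong generator for every $\p\in\Min(R/I')=\Min(R/I)$, hence $\radius_{\Db(R/\p)}(\mod R/\p)<\infty$ for all such $\p$ (by \cite[Section 4]{IyengarTakahashi2016}, cf.\ also \cite[Proposition 2.6]{AAITY}). Since these are exactly the minimal primes of the ring $R/I'$, applying \cite[Theorem 1]{KS} to $R/I'$ (in the form that also tracks a uniform bound, the argument being the same as for classical generators) gives $\radius_{\Db(R/I')}(\mod R/I')<\infty$; fix $G\in\Db(R/I')$ and $r\in\mathbb{N}$ with $\mod R/I'\subseteq\langle G\rangle_r^{\Db(R/I')}$.

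It remains to transport this ball into $\sw(\X)$. Under the identification of $\Db(R/I')$ with its essential image in $\Db(R)$ and then in $\Dsg(R)$ as in \Cref{rmk of dim rad}(7), the chosen $G$ is a bounded complex of $R/I'$-modules, hence lies in $\thick_{\Dsg(R)}(\mod R/I')$ by the standard truncation dévissage; and $\mod R/I'\subseteq\sw(\X)$, since every finitely generated module annihilated by $I'$ has a finite filtration with factors $R/\p$ with $\p\in\V(I')$, each of which lies in $\sw(\X)$ by hypothesis while $\sw(\X)$ is thick in $\Dsg(R)$. Thus $G\in\sw(\X)$, and \Cref{rmk of dim rad}(7) (pushing the ball from $\Db(R/I')$ through $\Db(R)$ to $\Dsg(R)$) together with the identity $\langle M\rangle_n^{\Dsg(R)}=\langle M\rangle_n^{\sw(\X)}$ for $M\in\sw(\X)$ gives $\mod R/I'\subseteq\langle G\rangle_r^{\Dsg(R)}=\langle G\rangle_r^{\sw(\X)}$, as required. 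The only genuinely external input is \cite[Theorem 1]{KS} in its finite/uniform form (and the equivalence, for a noetherian ring $S$, between $\mod S$ having a strong generator and $\radius_{\Db(S)}(\mod S)$ being finite); the one point specific to this setting that needs care is keeping the center $G$ of the ball inside $\sw(\X)$ rather than merely in $\Dsg(R)$, which is exactly what the observation $G\in\thick_{\Dsg(R)}(\mod R/I')\subseteq\sw(\X)$ resolves.
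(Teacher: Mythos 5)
Your overall strategy coincides with the paper's: replace $I$ by $I^e$ so that \Cref{mthm} applies with its ``moreover'' clause, obtain $\sw(\X)=\langle \mod R/I^e\rangle_N$, and then reduce to showing that $\mod R/I^e$ sits in a single ball of finite radius centered at an object of $\sw(\X)$. The difference is in how that last reduction is justified. The paper does not appeal to \cite[Theorem 1]{KS} here; it invokes \cite[Proposition 3.10]{dey2024stronggenerationmodulecategories}, which is precisely the statement that if $\mod R/\p$ has a strong generator for every $\p\in\Min(R/I)$ then $\mod R/I$ has a strong generator, i.e.\ $\syz^s_{R/I}(\mod R/I)\subseteq |G|^{\mod R/I}_n$ for some $G$, $n$, $s$; it then converts this module-category ball into a triangulated ball via \cite[Proposition 5.3]{DaoTakahashiradius}, which is why the generator in the paper is $G\oplus I$ (the summand $I\cong\syz_R(R/I)$ absorbs the free $R/I$-modules that are ``projective'' in $\mod R/I$ but nonzero in $\Dsg(R)$) and the bound is $rn(s+1)$. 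Your alternative transport through $\Db(R/I^e)\to\Db(R)\to\Dsg(R)$ and the observation that the center lands in $\thick_{\Dsg(R)}(\mod R/I^e)\subseteq\sw(\X)$ is fine and arguably cleaner. The one soft spot is your assertion that \cite[Theorem 1]{KS} holds ``in the form that also tracks a uniform bound, the argument being the same as for classical generators.'' That is not a formal consequence of the classical-generation statement: classical generation can be verified object by object, whereas strong generation requires a bound on the number of extensions \emph{and} on the syzygy degree that is uniform over all of $\mod R/I^e$, and controlling syzygies across the dévissage by nilradical powers and minimal primes is exactly the content of the result the paper cites. So you have correctly isolated the external input, but you should cite (or prove) the strong-generation descent result itself rather than a hypothetical uniform upgrade of the classical-generation theorem.
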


\begin{proof} By hypothesis, $I^e\subseteq \ann_R \sw(\X)$ for some $e\ge 1$. Since $\sqrt I=\sqrt{I^e}$, so we may replace $I$ by $I^e$ and assume $I\subseteq \ann_R \sw(\X)$. By \Cref{mthm}, $\sw(\X)=\langle \mod R/I \rangle_r$ for some integer $r\ge 0$. Now \cite[Proposition 3.10]{dey2024stronggenerationmodulecategories} applied to the ring $R/I$ implies that there exists $G\in \mod R/I$ and integers $n,s\ge 0$ such that $\syz^s_{R/I}(\mod R/I)\subseteq |G|^{\mod R/I}_n$. Now in view of \cite[Proposition 5.3]{DaoTakahashiradius} and the fact that $G\oplus I\in \sw(\X)$, we conclude that $\sw(\X)=\langle G\oplus I\rangle_{rn(s+1)}$.
\end{proof}

% \begin{rmk} By taking $\X:=\res_{\mod R}\{R/\p\mid \p\in \Phi\}$ in \Cref{new1} and \Cref{new2}, we can deduce consequences similar to \Cref{main thm}. 
% \end{rmk}

%%%%%%%%%%%%%%%%%%%%%%%%%%%%%%%%%%%%%%%%%%%%%%%%%%%%%%%%%%%%
We recall several terms for later use.
\begin{dfn} \label{def for lemma}
\begin{enumerate}[\rm(1)]
\item
We say that $R$ is {\em equicharacteristic} if $R$ contains a field as a subring.
\item
Assume that the Krull dimension of $R$ is finite.
We denote by $\Assh R$ the set of prime ideals $\p$ of $R$ such that the equality $\dim R = \dim R/\p$ holds. 
Note that the inclusion relation $\Min R \supseteq \Assh R$ holds. We say that $R$ is {\em equidimensional} if the two sets of prime ideals $\Min R$ and $\Assh R$ coincide.
\item
We say that $R$ is {\em half Cohen--Macaulay} if the inequality 
$2\depth R_{\p} \ge \dim R_{\p}$ holds for all prime ideals $\p$ of $R$. 
For example, Cohen--Macaulay ring is half Cohen--Macaulay.
As we will see in \Cref{dim 41}, a two-dimensional depth one noetherian local ring which are locally Cohen--Macaulay on the punctured spectrum is half Cohen--Macaulay but not Cohen--Macaulay.
\item 
Let $R$ be a finitely generated algebra over a field $k$ (resp. an equicharacteristic complete noetherian local ring with residue field $k$).
Then $R$ is isomorphic to 
\begin{center}
$k[X_1, \ldots,X_n]/(f_1, \ldots, f_c)$ (resp. 
$k[\![X_1, \ldots,X_n]\!]/(f_1, \ldots, f_c)$)
\end{center}
for some $n, c \in \mathbb{Z}_{\ge 0}$ and $f_{1}, \dots, f_{c}$ in $k[X_1, \ldots,X_n]$ (resp. $k[\![X_1, \ldots,X_n]\!]$).
The latter isomorphism follows from Cohen's structure theorem.
We set  $h = n - \dim R$. 
Note that $h$ equals to the height of the ideal $(f_1, \ldots, f_c)$ in $k[X_1, \ldots,X_n]$ (resp. $k[\![X_1, \ldots,X_n]\!]$).
We define the {\em Jacobian ideal} of $R$, denoted  by $\jac (R)$, as the ideal of $R $ generated by all $h \times h$ minors of the Jacobian matrix
$\partial(f_1,\dots,f_c)/\partial(X_1,\dots,X_n)=(\partial f_i/\partial X_j)_{i,j}$.
\item
Let $n$ be a nonnegative integer.
We define the {\em $n$-th cohomology  annihilator} of $R$, denoted by $\ca^{n}(R)$ as follows.
\begin{center}
$\ca^{n}(R)=
\displaystyle\bigcap_{m\ge n}
\left(
\displaystyle\bigcap_{M, N \in \mod R}
\ann_{R}\Ext_{R} ^{m}(M, N)
\right)$.
\end{center}
In addition, we define the {\em cohomology annihilator} of $R$ as follows.
\begin{center}
$\ca(R)=
\displaystyle\bigcup_{n\ge 0} \ca^{n}(R)$.
\end{center}
Note that the ascending chain of cohomology annihilators
$0=\ca^{0}(R) \subseteq \ca^{1}(R) \subseteq \ca^{2}(R) \subseteq \cdots$ stabilizes.
\end{enumerate}
\end{dfn}
%%%%%%%%%%%%%%%%%%%%%%%%%%%%%%%%%%%%%%%%%%%%%%%%%%%%%%%%%%%%
Now, we state some ideals contained in the annihilator of singularity category.
Note that we can apply \Cref{main thm} to these ideals.
\begin{lem} \label{ideals in annihilator}
\begin{enumerate}[\rm(1)]
\item 
One has 
$\ca(R) \subseteq \ann_R \Dsg(R)$.
\item
Assume that $R$ is local. 
Then we have 
$\soc R \subseteq \ca(R)$.
\item
Let $R$ be a finitely generated algebra over a field, or an equicharacteristic complete local ring of dimension $d$. 
Assume that $R$ is equidimensional half Cohen--Macaulay.
Then one has 
$\jac (R) \subseteq \ca^{d+1}(R)$. 
In particular, we have
$\jac(R) \subseteq \ann_R \Dsg(R)$.
\end{enumerate}
\end{lem}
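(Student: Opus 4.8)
The plan is to prove the three parts largely independently, though (1) will be the workhorse that (2) and (3) feed into. For part (1), recall that $\ca(R)=\bigcup_n \ca^n(R)$, so fix $a\in\ca^n(R)$ for some $n$ and an object $M\in\Dsg(R)$. By Buchweitz's description, every object of $\Dsg(R)$ is isomorphic to (a shift of) a syzygy $\syz^t N$ of an $R$-module $N$ with $t\ge 0$ arbitrarily large; choosing $t\ge n$, one has $\End_{\Dsg(R)}(M)\cong\underline{\Hom}_R(\syz^t N,\syz^t N)$, and stable homomorphisms between high syzygies are governed by $\Ext_R^{\ge 1}$-modules. More precisely, one shows $a$ annihilates $\underline{\Hom}_R(\syz^t N,\syz^t N)$ because multiplication by $a$ on this group factors through a map that is detected by some $\Ext_R^m(-,-)$ with $m\ge n$, which $a$ kills by definition of $\ca^n(R)$. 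This gives $\ca(R)\subseteq\ann_R\End_{\Dsg(R)}(M)$ for all $M$, hence $\ca(R)\subseteq\ann_R\Dsg(R)$. (Alternatively, one can cite the identification $\ca(\mod R)=\ann_R\sw(\mod R)=\ann_R\Dsg(R)$ recorded in \Cref{ipd}(4) together with \Cref{remsw}(1), which makes (1) essentially a pointer to the literature.)

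For part (2), assume $(R,\m,k)$ is local and let $a\in\soc R$, so $\m a=0$. I would show $a\in\ca^1(R)$, i.e. $a$ annihilates $\Ext_R^m(M,N)$ for every $m\ge 1$ and all finitely generated $M,N$. The cleanest route: $\Ext_R^m(M,N)$ is computed from a free resolution $F_\bullet\to M$ whose differentials have entries in $\m$ for $m\ge 1$ (taking $F_\bullet$ minimal); multiplication by $a\in\soc R$ on $\Hom_R(F_{m},N)$ composed with either differential is zero since the differential entries lie in $\m$. Hence $a$ kills both the cocycles' contribution beyond degree $0$ and the coboundaries, giving $a\,\Ext_R^m(M,N)=0$ for $m\ge 1$; a standard argument upgrades this from minimal resolutions to arbitrary ones (or one simply notes $\Ext$ is independent of the resolution). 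Then $\soc R\subseteq\ca^1(R)\subseteq\ca(R)$.

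For part (3), I would invoke the Jacobian-ideal results in the style of the Noether different / Auslander--Huneke--Wiegand circle of ideas. The key input is a theorem asserting that under the equidimensional half Cohen--Macaulay hypothesis, $\jac(R)$ annihilates $\Ext_R^m(M,N)$ uniformly for $m>d=\dim R$; this rests on (a) the fact that $R_\p$ is regular, hence $\jac(R)R_\p=R_\p$, for every $\p$ in the regular locus, so $V(\jac R)=\Sing R$, and (b) a grade/depth bookkeeping argument using the half Cohen--Macaulay condition to bound the cohomological degree past which the Jacobian acts as zero by $d+1$ — equidimensionality is what lets one reduce to the local regular-in-codimension statement and control dimensions of non-free loci. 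Concretely one shows $\jac(R)\cdot\syz^{d+1}(\mod R)$ has finite projective dimension locally everywhere, hence $\jac(R)\subseteq\ca^{d+1}(R)$; combining with part (1) yields $\jac(R)\subseteq\ann_R\Dsg(R)$. The main obstacle is this part (3): verifying that the correct vanishing degree is exactly $d+1$ requires the careful depth inequality $2\depth R_\p\ge\dim R_\p$ to convert a codimension bound on $\Sing R$ into a bound on how many syzygies are needed before the Jacobian forces freeness, and this is where the half Cohen--Macaulay hypothesis is genuinely used rather than just Cohen--Macaulayness; I expect to cite an existing uniform-annihilator theorem (e.g. from work on cohomology annihilators and the Jacobian ideal) for the technical heart and spend the proof assembling the hypotheses into the exact form that theorem demands.
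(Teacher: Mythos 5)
Your proposal is correct and matches the paper, whose entire proof consists of the three citations you identify or reconstruct: part (1) is \cite[Proposition 5.3(1)]{Liu} (equivalently the identification $\ca(\mod R)=\ann_R\sw(\mod R)=\ann_R\Dsg(R)$ you mention), part (2) is \cite[Example 2.6]{IyengarTakahashi2016}, and part (3) is exactly the uniform-annihilation theorem \cite[Theorem 1.2]{IyengarTakahashi2021} that you anticipate citing. One small polish for (2): the clean way to see that $a\in\soc R$ kills cocycles is that any cocycle in $\Hom_R(F_m,N)$ factors through $F_m/\ker d_m\cong\syz^m M\subseteq\m F_{m-1}$, and $a\m=0$, which gives $\soc R\subseteq\ca^1(R)$ directly.
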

\begin{proof}
(1), (2), and (3) follow from \cite[Proposition 5.3(1)]{Liu}, \cite[Example 2.6]{IyengarTakahashi2016}, and \cite[Theorem 1.2]{IyengarTakahashi2021} respectively.
\end{proof}
%%%%%%%%%%%%%%%%%%%%%%%%%%%%%%%%%%%%%%%%%%%%%%%%%%%%%%%%%%%%
In general, it may be difficult to determine the cohomology annihilator or the annihilator ideal of the singularity category of $R$.
However, we can easily compute the Jacobian ideal of $R$ whenever it is defined.
The following corollary suggests that under the assumptions of \Cref{ideals in annihilator}(3), we can calculate an upper bound for the dimension of the singularity category of $R$ by obtaining a radius of the category of modules over the factor ring of the Jacobian ideal of $R$.
\begin{cor} \label{for calculation}
Let $R$ be a finitely generated algebra over a field, or an equicharacteristic complete noetherian local ring of dimension $d$. 
If $R$ is equidimensional half Cohen--Macaulay, then one has
\begin{center}
$\Dsg(R) = 
{\langle \mod (R/\jac(R))
\rangle}_{\mu(\jac(R))-\grade (\jac(R))+1}$.
\end{center}
In particular, we have the following inequalities.
\begin{align*}
\dim \Dsg(R)
& \leq (\radius_{\Dsg(R)} (\mod (R/\jac(R)) + 1)(\mu(\jac(R))-\grade (\jac(R))+1)-1 \\
& \leq (\dim\Db(R/\jac(R)) + 1)(\mu(\jac(R))-\grade (\jac(R))+1)-1.
\end{align*}
\end{cor}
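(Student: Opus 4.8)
The plan is to obtain this corollary as a direct specialization of \Cref{main thm} to the ideal $I=\jac(R)$. Concretely, I would first check that the two hypotheses that appear in \Cref{main thm} are met with this choice, namely (i) $\Sing R\subseteq\V(\jac(R))$ and (ii) $\jac(R)\subseteq\ann_R\Dsg(R)$, and then simply transcribe the conclusion of \Cref{main thm}.

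Both (i) and (ii) are handled by \Cref{ideals in annihilator}(3). Indeed, under the standing hypotheses---that $R$ is a finitely generated algebra over a field or an equicharacteristic complete noetherian local ring of dimension $d$, and that $R$ is equidimensional and half Cohen--Macaulay---that lemma provides the containments $\jac(R)\subseteq\ca^{d+1}(R)\subseteq\ann_R\Dsg(R)$, which is exactly (ii). For (i) I would combine (ii) with \Cref{ipd}: since $\ipd(\mod R)=\Sing R$ and $\sw(\mod R)=\Dsg(R)$ (\Cref{remsw}(1)), part (4) of \Cref{ipd} gives $\Sing R\subseteq\V(\ann_R\Dsg(R))$, and from $\jac(R)\subseteq\ann_R\Dsg(R)$ we get $\V(\ann_R\Dsg(R))\subseteq\V(\jac(R))$; composing these inclusions yields $\Sing R\subseteq\V(\jac(R))$.

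Once (i) and (ii) are in place, \Cref{main thm} applied with $I=\jac(R)$ immediately delivers the equality $\Dsg(R)=\langle\mod(R/\jac(R))\rangle_{\mu(\jac(R))-\grade(\jac(R))+1}$. The two displayed inequalities are then nothing but the two inequalities in the ``in particular'' part of \Cref{main thm}, read off for $I=\jac(R)$; the last one uses the bound $\radius_{\Dsg(R)}(\mod(R/\jac(R)))\le\dim\Db(R/\jac(R))$ recorded in \Cref{rmk of dim rad}(7).

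I do not anticipate any genuine obstacle here: the substantive input is entirely packaged into \Cref{ideals in annihilator}(3) (which in turn rests on the results of Iyengar and Takahashi) and into \Cref{main thm}. The only point that deserves a line of comment is the degenerate case where $R$ is regular, so that $\jac(R)=R$ and $\Dsg(R)=0$; there one reads the grade of the unit ideal as $+\infty$, and the asserted equality together with both inequalities hold trivially.
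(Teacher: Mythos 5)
Your proposal is correct and matches the paper's argument exactly: the paper proves this corollary by citing \Cref{main thm} together with \Cref{ideals in annihilator}(3), which is precisely your route. The only difference is that you spell out why $\jac(R)\subseteq\ann_R\Dsg(R)$ forces $\Sing R\subseteq\V(\jac(R))$ via \Cref{ipd}(4), a step the paper leaves implicit.
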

\begin{proof}
The assertions follow from \Cref{main thm} and \Cref{ideals in annihilator}.
\end{proof}
%%%%%%%%%%%%%%%%%%%%%%%%%%%%%%%%%%%%%%%%%%%%%%%%%%%%%%%%%%%%
\begin{rmk} \label{depth zero case}
Let $(R,\m,k)$ be a noetherian local ring of depth zero.
Then
$\mu (\soc R) = \dim_{k} \soc R = \dim_{k}(0:_{R}\m)$ 
is the type of $R$, denoted by $\r(R)$.
In addition, the grade of $\soc R$ is zero.
Hence by \Cref{main thm}, one has
$\Dsg(R) 
= {\langle \mod (R/\soc R) \rangle}_{\r(R)+1}$.
However the following equality always holds.
\begin{center}
$\Dsg(R) 
= \langle \mod (R/\soc R) \rangle$. 
\end{center}
Indeed, for all finitely generated $R$-modules $M$,
the syzygies $\Omega_{R} ^{i} M$ $(i>0)$ are embedded into finite direct sums of $\m$ and they are annihilated by $\soc R$.
Combining this with \cite[Lemma 2.4(2)]{DaoTakahashi2015}, we obtain the above equality.
\end{rmk}
%%%%%%%%%%%%%%%%%%%%%%%%%%%%%%%%%%%%%%%%%%%%%%%%%%%%%%%%%%%%
The following lemma is an immediate consequence of \cite[Corollary 3.3(2)]{AiharaTakahashi2015} and \cite[Proposition 7.37]{Rouquier}.
\begin{lem} \label{trivial der dim}
\begin{enumerate}[\rm(1)]
\item 
If $R$ is a regular ring with finite Krull dimension $d$, 
then one has 
$\Db(R) = 
{\langle R \rangle}_{d+1}$.
\item
If $R$ is an artinian ring, then one has
$\Db(R) = 
{\langle R/\rad R \rangle}_{\ell\ell(R)}$ 
where $\rad R$ is the Jacobson radical of $R$ and 
$\ell\ell(R) = \inf \{n\ge0 \mid (\rad R)^{n}=0\}$ 
is the Loewy length of $R$.
\end{enumerate}
\end{lem}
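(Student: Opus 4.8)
The plan is to invoke the two external results cited in the statement and unpack what each one says. For part (1), I would recall that \cite[Corollary 3.3(2)]{AiharaTakahashi2015} computes the dimension (or more precisely, a generation bound) of $\Db(R)$ over a regular ring in terms of its Krull dimension; since $R$ is regular, every finitely generated module has projective dimension at most $d$, and so every object of $\Db(R)$ is built from $R$ using at most $d+1$ steps. Concretely, one shows by induction on $\pd_R M$ that a module $M$ with $\pd_R M = t$ lies in $\langle R\rangle_{t+1}$ (truncating the minimal free resolution into short exact sequences), and then a standard d\'evissage on the number of nonzero cohomology modules of a complex, together with $\langle R\rangle_{d+1}\diamond\langle R\rangle_{d+1}\subseteq\langle R\rangle_{2d+2}$-type estimates refined by the fact that $\langle R\rangle_{a}\diamond\langle R\rangle_{1}=\langle R\rangle_{a+1}$, upgrades this to all of $\Db(R)$; but since the cited corollary already packages exactly this, I would simply quote it.

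For part (2), when $R$ is artinian, $\Db(R)$ is generated by the residue ring $R/\rad R$ (a semisimple ring, i.e.\ a finite product of fields), and \cite[Proposition 7.37]{Rouquier} gives the precise radius $\ell\ell(R)$: the point is that the filtration $R\supseteq \rad R\supseteq (\rad R)^2\supseteq\cdots\supseteq(\rad R)^{\ell\ell(R)}=0$ exhibits any module as built from $R/\rad R$-modules in $\ell\ell(R)$ layers, and conversely $R$ itself requires this many. Again, since Rouquier's proposition states this verbatim, the proof reduces to citing it and matching notation (noting that $\add$ of $R/\rad R$ is all semisimple modules, so $\langle R/\rad R\rangle$ already absorbs the full category of finite-length semisimple modules).

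I do not anticipate a genuine obstacle here, since both halves are direct quotations; the only thing requiring a sentence of care is checking that the generation statements in \cite{AiharaTakahashi2015} and \cite{Rouquier} are phrased in the $\langle-\rangle_r$ convention used in Definition \ref{def of dim} (as opposed to an off-by-one convention), which is routine. Accordingly, the proof is as short as:

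\begin{proof}
(1) Since $R$ is regular of Krull dimension $d$, the assertion is \cite[Corollary 3.3(2)]{AiharaTakahashi2015}.

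(2) Since $R$ is artinian, the assertion is \cite[Proposition 7.37]{Rouquier}.
\end{proof}
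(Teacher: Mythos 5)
Your proposal matches the paper exactly: the paper offers no proof beyond remarking that the lemma "is an immediate consequence of \cite[Corollary 3.3(2)]{AiharaTakahashi2015} and \cite[Proposition 7.37]{Rouquier}," and you attribute the same two citations to parts (1) and (2) respectively, with the correct content. The additional sketches you give (d\'evissage along a finite free resolution for the regular case, the radical filtration for the artinian case) are accurate but not needed.
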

%%%%%%%%%%%%%%%%%%%%%%%%%%%%%%%%%%%%%%%%%%%%%%%%%%%%%%%%%%%%
\begin{rmk} \label{dim vs rad}
Let $I$ be an ideal of $R$ contained in $\ann_{R}\Dsg(R)$.
As shown in \Cref{main thm}, it is not necessarily required to obtain upper bounds for the dimension of $\Db(R/I)$ so that we get that of $\Dsg(R)$.
It might be easier to obtain upper bounds for 
$\radius_{\Db(R/I)}(\mod R/I)$ than to get that of $\Db(R/I)$.

For example, suppose that there exist object $G$ in $\Db(R)$ and nonnegative integers $s,n$ 
such that $\Omega_{R}^{s}(\mod R) \subseteq {\langle G\rangle}_{n}^{\Db(R)}$ (e.g., the category $\mod R$ has a {\em strong generator} in the sense of Iyengar and Takahashi \cite[Section 4]{IyengarTakahashi2016}).
Then we can easily see that
$\mod R \subseteq {\langle
G\oplus R \rangle}_{n+s}^{\Db(R)}$
by taking projective resolutions of modules in $\mod R$; see also \cite[Lemma 2.9]{Oppermann}.
Hence if $R$ is a regular ring with finite Krull dimension $d$, then one has
$\mod R \subseteq 
{\langle R \rangle}_{d+1} ^{\Db(R)}$.
In particular, we have
$\radius_{\Db(R)}(\mod R) \le d$.
Moreover, if $R$ is a Cohen--Macaulay local ring of {\em finite CM type}, that is, the set of isomorphism classes of indecomposable maximal Cohen--Macaulay $R$-modules is finite,
then one has 
$\mod R \subseteq 
{\langle G \rangle}_{d+1} ^{\Db(R)}$
for some maximal Cohen--Macaulay $R$-module $G$.
These conclusions can be reached through a straightforward discussion. However, more generally, the dimensions of derived categories are already known; see \cite[Theorem 4.1]{AAITY}.
\end{rmk}
%%%%%%%%%%%%%%%%%%%%%%%%%%%%%%%%%%%%%%%%%%%%%%%%%%%%%%%%%%%%
In the following example, we give an upper bound for the dimension of the singularity category over a complete noetherian local ring which is neither Cohen--Macaulay nor isolated singularity.
\begin{eg} \label{dim 41}
Let $k$ be a field with characteristic different from two,
and we consider 
\begin{center}
$R= k[\![X, Y, Z, W]\!]/(X^2, Y)\cap (Z, W) 
= k[\![X, Y, Z, W]\!]/(X^2 Z, X^2 W, YZ, YW)$. 
\end{center}
Since 
$(X^2 Z, X^2 W, YZ, YW) = (X^2, Y)\cap (Z, W)$ 
is a minimal primary decomposition, we have 
$\Ass R = \{ (x,y), (z,w)\} = \Min R =\Assh R$. 
Hence $R$ is a two-dimensional equicharacteristic equidimensional complete local ring. 
We can easily see that $x-z$ is a regular element of $R$ and
the ring $R/(x-z)$ has nonzero socle element $\overline{z}^2$.
Thus the depth of $R$ is one.

We shall show that $R$ is locally a hypersurface on the punctured spectrum but not an isolated singularity.
Let $\p$ be a prime ideal of $R$ different from 
$\m = (x,y,z,w)$.
If $\p$ does not contain $x$ or $y$, 
then $R_{\p}$ is isomorphic to a localization of a formal power series ring in two variables. 
Thus $R_{\p}$ is a regular local ring.
On the other hand, if $\p$ does not contain $z$ or $w$, 
then $R_{\p}$ is isomorphic to a localization of the ring obtained by factoring a formal power series ring in three variables by the square of a certain variable.
Hence $R_{\p}$ is a hypersurface but not a regular local ring.
Therefore $R$ is locally a hypersurface on the punctured spectrum. 
In particular, $R$ is half Cohen--Macaulay.

The height of the ideal $(X^2 Z, X^2 W, YZ, YW)$ is two in $k[\![X, Y, Z, W]\!]$. 
Hence the Jacobian ideal of $R$ is generated by $2 \times 2$ minors of Jacobian matrix of $R$, and we can compute that
\begin{center}
$\jac(R) = (x^{4}, x^{2}y, xz^{2}, xzw, xw^{2}, y^2)$.
\end{center}
Thus we have $\mu(\jac(R)) = 6$.
Since $\sqrt{\jac(R)} = (x,y)$ is a minimal prime ideal of $R$,  
we can see that 
$0 \le \grade(\jac(R)) \le \height(\jac(R))=0 $.

Let $S = R/\jac(R)$. 
We shall compute an upper bound for the dimension of the bounded derived category of $\mod S$.
Note that $S$ is isomorphic to the following ring.
\begin{center}
$k[\![X, Y, Z, W]\!]/
(X^{2}Z, X^{2}W, YZ, YW,X^{4}, X^{2}Y, XZ^{2}, XZW, XW^{2}, Y^{2})$.
\end{center} 
We denote by $\p = (x,y)S$ the unique minimal prime ideal of $S$.
For an object $A$ in $\Db(S)$, there exists an exact triangle
\begin{center}
$A/(0:_{S}\p)A \to A \to A/\p A \to \Sigma (A/(0:_{S}\p)A)$
\end{center}
in $\Db(S)$.
Thus we have 
$\Db(S) = 
\Db(S/(0:_{S}\p)) \ast \Db(S/\p)$.
We can see that 
$(0:_{S}\p) 
= (x^{3}, xy, xz, xw, z^{2}, zw, w^{2})S$, and hence one has
\begin{center}
$S/(0:_{S}\p) \cong k[\![X, Y, Z, W]\!]/
(X^{3}, XY, XZ, XW,
Y^{2}, YZ, YW,
Z^{2}, ZW,
W^{2})$.
\end{center}
Let $\n$ be the maximal ideal of $S/(0:_{S}\p)$. 
Then it holds that
$\n ^{3}=0 $ and $ 0\neq \overline{x}^{2} \in \n^{2}$ in  $S/(0:_{S}\p)$. 
Hence the Loewy length of $S/(0:_{S}\p)$ is three and we have
\begin{center}
$\Db(S/(0:_{S}\p)) 
={\langle k \rangle}_{3} ^{\Db(S/(0:_{S}\p)) } $.
\end{center}
Moreover, since
$S/\p \cong k[\![Z, W]\!]$
is a two-dimensional regular local ring, one has 
\begin{center}
$\Db(S/\p) =
{\langle S/\p \rangle}_{3} ^{\Db(S/\p)}$
\end{center}
by \Cref{trivial der dim}. 
Combining with the above results, we obtain
\begin{center}
$\Db(S) 
= {\langle k \oplus S/\p \rangle}_{6}$.
\end{center}
Therefore by using \Cref{main thm}, we have
\begin{center}
$\Dsg(R) 
={\langle k \oplus S/\p \rangle}_{(6-0+1) \cdot 6} 
={\langle k \oplus R/(x,y) \rangle}_{42}$.
\end{center}
Consequently, we obtain an upper bound for the dimension of singularity category of $R$ as follows:
\begin{center}
$\dim \Dsg(R) \le 41$.
\end{center}
\end{eg}
%%%%%%%%%%%%%%%%%%%%%%%%%%%%%%%%%%%%%%%%%%%%%%%%%%%%%%%%%%%%
We apply \Cref{depth zero case} to compute an upper bound for the dimension of the singularity category.
\begin{eg} \label{eg of depth zero}
Let $k$ be a field
and $n>1$ an integer.
We consider 
\begin{center}
$R= k[\![X, Y]\!]/(X^{n}, X^{n-1}Y) 
= k[\![X, Y]\!]/(X^{n-1})\cap (X^{n}, Y)$. 
\end{center}
We can easily see that $R$ is a one-dimensional depth zero complete local ring with
$\spec R=\{(x), (x, y)\}=\Ass R \supsetneq \Min R=\{(x)\} = \Assh R$.
Note that $R$ is not an isolated singularity except for the case where $n=2$. 
We see that $\soc R = (x^{n-1})$, and
thus one has an isomorphism 
$R/\soc R \cong k[\![X, Y]\!]/(X^{n-1})$. 
This is a hypersurface.

Let $S=R/\soc R$ and $\p = xS \in \Min S$.
Then there are isomorphisms
$S/\p \cong k[\![Y]\!] \cong R/(x)$, 
and thus the equality
$\Db(S/\p)=
{\langle R/(x) \rangle}_{2}$
holds.
Now we shall calculate an upper bound for the dimension of $\Db(S)$.
Let $A$ be an object in $\Db(S)$.
Then there exists a filtration
\begin{center}
$0=\p^{n-1}A \subseteq \p^{n-2}A \subseteq \cdots
\subseteq \p A \subseteq A$
\end{center}
of bounded $R$-complexes.
For each $i=1, \ldots, n-1$, 
the complexes $\p^{i-1}A/\p^{i}A$ 
belongs to $\Db(S/\p)$.
Thus the inductive argument shows that 
$A$ belongs to 
${\langle \Db(S/\p) \rangle}_{n-1} ^{\Db(S)}
= {\langle R/(x) \rangle}_{2(n-1)} ^{\Db(S)}$.
Therefore we obtain 
$\Db(S)={\langle R/(x) \rangle}_{2(n-1)}$.
Hence by \Cref{depth zero case}, we conclude that 
\begin{center}
$\Dsg(R)={\langle R/(x) \rangle}_{2(n-1)}$.
\end{center}
In particular, we get an upper bound for the dimension of singularity category of $R$ as follows.
\begin{center}
$\dim \Dsg(R) \le 2(n-1)-1$.
\end{center}

When $n=2$, the generator $R/(x)$ of $\Dsg(R)$ coincides with the module $R/\soc R$, which 
is isomorphic to the residue field $k$ in $\Dsg(R)$ up to a shift. 
Hence one has 
$\Dsg(R)=
{\langle k \rangle}_{2}$
and we obtain an inequality
$\dim \Dsg(R) \le 1$.
\end{eg}
%%%%%%%%%%%%%%%%%%%%%%%%%%%%%%%%%%%%%%%%%%%%%%%%%%%%%%%%%%%%
As a first step, we give an explicit generator and an upper bound for the dimension of the singularity category of $R$ when the dimension of $\Sing R$ is zero. 
\begin{cor} \label{cor of dimSing 0}
\begin{enumerate}[\rm(1)]
\item 
Suppose that the dimension of $\Dsg(R)$ is finite, 
and for all prime ideals $\p \in \spec R$ which is not a maximal ideal, 
the localization $R_{\p}$ is a regular local ring.
Then $\V(\ann_{R}\Dsg(R))$ consists of maximal ideals of $R$.
\item
Let $I$ be an ideal of $R$ contained in $\ann_{R}\Dsg(R)$. 
If $\V(I)$ consists of maximal ideals of $R$, then one has
\begin{center}
$\Dsg(R)=
{\left\langle \dfrac{R/I}{\rad(R/I)}
\right\rangle}_{\ell\ell(R/I)(\mu(I)-\grade I +1)}$.
\end{center}
In particular, we have an inequality
\begin{center}
$\dim \Dsg(R) \le 
\ell\ell(R/I)(\mu(I)-\grade I +1)-1$.
\end{center}
\end{enumerate}
\end{cor}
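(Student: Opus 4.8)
The plan is to read off part (1) from \Cref{closedness}(1) and to deduce part (2) from \Cref{main thm} together with \Cref{trivial der dim}(2).

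For (1), I would apply \Cref{closedness}(1) with $\X=\{R\}$. Then $\Db(R)/\thick\X=\Dsg(R)$ by definition, and for a prime $\p$ of $R$ one has $\Db(R_{\p})=\thick\{R_{\p}\}$ precisely when $R_{\p}$ is regular, since $\thick_{\Db(R_{\p})}\{R_{\p}\}$ is the subcategory of complexes of finite projective dimension (\Cref{rmk of dim rad}(1)). Hence the equality furnished by \Cref{closedness}(1), which is available because $\dim\Dsg(R)<\infty$ is assumed, reads $\Sing R=\V(\ann_{R}\Dsg(R))$. The hypothesis that $R_{\p}$ is regular for every non-maximal prime $\p$ is exactly the statement that $\Sing R$ consists of maximal ideals, hence so does $\V(\ann_{R}\Dsg(R))$.

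For (2), I would first check $\Sing R\subseteq\V(I)$: this follows from $\Sing R=\ipd(\mod R)\subseteq\V(\ann_{R}\sw(\mod R))=\V(\ann_{R}\Dsg(R))\subseteq\V(I)$, using \Cref{ipd}(4), \Cref{remsw}(1), and the hypothesis $I\subseteq\ann_{R}\Dsg(R)$. Thus \Cref{main thm} applies and gives $\Dsg(R)=\langle\mod R/I\rangle_{\mu(I)-\grade I+1}$, with $\mod R/I$ regarded as a subcategory of $\Dsg(R)$. Since $\V(I)=\spec(R/I)$ consists of maximal ideals, $R/I$ is artinian, so \Cref{trivial der dim}(2) yields $\mod R/I\subseteq\Db(R/I)=\langle G\rangle_{\ell\ell(R/I)}^{\Db(R/I)}$ for $G:=(R/I)/\rad(R/I)$; pushing this inclusion forward along $\Db(R/I)\to\Db(R)\to\Dsg(R)$ via \Cref{rmk of dim rad}(7) gives $\mod R/I\subseteq\langle G\rangle_{\ell\ell(R/I)}^{\Dsg(R)}$. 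Finally, a short induction on $b$ — using the associativity of $\diamond$ (\Cref{rmk of dim rad}(3)) and the immediate identity $\langle G\rangle_{a}\diamond\langle G\rangle_{b}=\langle G\rangle_{a+b}$ — shows that $\Y\subseteq\langle G\rangle_{a}$ forces $\langle\Y\rangle_{b}\subseteq\langle G\rangle_{ab}$; applying this with $\Y=\mod R/I$, $a=\ell\ell(R/I)$, $b=\mu(I)-\grade I+1$ gives $\Dsg(R)=\langle\mod R/I\rangle_{b}\subseteq\langle G\rangle_{ab}$, while the reverse inclusion is clear since $G\in\Dsg(R)$. Hence $\Dsg(R)=\langle G\rangle_{ab}$, and $\dim\Dsg(R)\le ab-1$ follows at once from the definition of the radius.

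I do not expect a genuine obstacle here; the only points that require care are the bookkeeping with the identifications of $\Db(R/I)$ inside $\Db(R)$ and of $\mod R/I$ inside $\Dsg(R)$, and the verification of the ball-composition identity $\langle\langle G\rangle_{a}\rangle_{b}=\langle G\rangle_{ab}$, both of which are routine.
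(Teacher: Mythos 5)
Your proof is correct and follows essentially the same route as the paper: part (1) via the identification $\V(\ann_R\Dsg(R))=\Sing R$ (the paper cites Liu's Theorem 4.6, which is exactly what \Cref{closedness}(1) with $\X=\{R\}$ recovers), and part (2) by combining \Cref{main thm} with \Cref{trivial der dim}(2) and the standard ball-composition identity. The extra verifications you supply (that $\Sing R\subseteq\V(I)$ and the bookkeeping of identifications) are correct and merely make explicit what the paper leaves implicit.
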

\begin{proof}
(1) By using \cite[Theorem 4.6]{Liu}, 
$\V(\ann_{R}\Dsg(R))$ coincides with the singular locus of $R$, which consists of maximal ideals of $R$ by the hypothesis.

(2) Since $R/I$ is an artinian ring, the equality 
$\Db(R/I) = {\langle (R/I)/\rad(R/I) 
\rangle}_{\ell\ell(R/I)}$ holds by \Cref{trivial der dim}.
Hence the assertion follows from \Cref{main thm}.
\end{proof}
%%%%%%%%%%%%%%%%%%%%%%%%%%%%%%%%%%%%%%%%%%%%%%%%%%%%%%%%%%
The restriction to the case where $(R,\m,k)$ is a local ring, \Cref{cor of dimSing 0} recovers \cite[Lemma 6.5]{Liu}.
Combining this with \cite[Corollary 7.2]{IyengarTakahashi2016}, we obtain \cite[Theorem 1.3]{Liu} as follows.
\begin{cor}[Liu]\label{thm of Liu}
Let $(R,\m,k)$ be an equicharacteristic excellent local ring with an isolated singularity. 
Then the following hold.
\begin{enumerate}[\rm(1)]
\item
The ideal $\ann_{R}\Dsg(R)$ is $\m$-primary.
\item
For any $\m$-primary ideal $I$ contained in $\ann_{R}\Dsg(R)$, one has
$$
\Dsg(R)={\langle k\rangle}_{(\mu(I)-\depth R+1)\ell\ell(R/I)}.
$$
\end{enumerate}
\end{cor}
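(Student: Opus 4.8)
The plan is to deduce (1) from the cohomology-annihilator theorem of Iyengar--Takahashi together with \Cref{ideals in annihilator}(1), and to deduce (2) from \Cref{cor of dimSing 0}(2), after one elementary observation. That observation is: since $R$ is an isolated singularity, $\Sing R\subseteq\{\m\}$, so every $\m$-primary ideal $I$ satisfies $\V(I)=\{\m\}\supseteq\Sing R$; and, grade being a function of the radical only, $\grade I=\grade\sqrt I=\grade(\m,R)=\depth R$.

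For (1), I would invoke \cite[Corollary 7.2]{IyengarTakahashi2016}, which says that the cohomology annihilator $\ca(R)$ of an equicharacteristic excellent local ring with an isolated singularity is $\m$-primary (this is the only place the hypotheses ``equicharacteristic'' and ``excellent'' are used). By \Cref{ideals in annihilator}(1) one has $\ca(R)\subseteq\ann_R\Dsg(R)$. If $R$ is regular then $\Dsg(R)=0$ and there is nothing to say; otherwise $\Dsg(R)\neq 0$, so $\ann_R\Dsg(R)$ is a proper ideal, and a proper ideal of a local ring containing an $\m$-primary ideal is itself $\m$-primary. This proves (1), and in particular guarantees the existence of $\m$-primary ideals $I\subseteq\ann_R\Dsg(R)$.

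For (2), fix an $\m$-primary ideal $I$ with $I\subseteq\ann_R\Dsg(R)$. By the observation, $\V(I)=\{\m\}$ consists of a single maximal ideal, so \Cref{cor of dimSing 0}(2) applies and gives
\[
\Dsg(R)=\left\langle (R/I)/\rad(R/I)\right\rangle_{\ell\ell(R/I)(\mu(I)-\grade I+1)}=\langle k\rangle_{(\mu(I)-\depth R+1)\ell\ell(R/I)},
\]
where the second equality uses $(R/I)/\rad(R/I)\cong k$ (as $R/I$ is artinian local with $\rad(R/I)=\m/I$) and $\grade I=\depth R$. This is the asserted formula. Equivalently, one can bypass \Cref{cor of dimSing 0}: run \Cref{main thm} to get $\Dsg(R)=\langle\mod R/I\rangle_{\mu(I)-\grade I+1}$, use $\Db(R/I)=\langle k\rangle_{\ell\ell(R/I)}$ from \Cref{trivial der dim}(2) to place $\mod R/I$ inside $\langle k\rangle_{\ell\ell(R/I)}$ in $\Dsg(R)$, and compose balls via the multiplicativity $\langle\langle\X\rangle_a\rangle_b\subseteq\langle\X\rangle_{ab}$ coming from \Cref{rmk of dim rad}(3)(4).

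I do not expect any genuine obstacle here: the substantive content is already carried by \Cref{main thm} and \Cref{cor of dimSing 0}, and the only points needing a little care are the compatibility of the several identifications of $\mod R/I$ (inside $\mod R$, inside $\Db(R)$, and inside $\Dsg(R)$) and the ball-composition bookkeeping, both routine. The single genuinely external ingredient is \cite[Corollary 7.2]{IyengarTakahashi2016}, used only to launch part (1).
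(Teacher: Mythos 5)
Your proposal is correct and follows essentially the same route as the paper, which derives the corollary by combining \Cref{cor of dimSing 0}(2) (with the observations $\V(I)=\{\m\}$ and $\grade I=\depth R$) for part (2) with \cite[Corollary 7.2]{IyengarTakahashi2016} and \Cref{ideals in annihilator}(1) for part (1). The only cosmetic caveat is the degenerate regular case, where $\ann_R\Dsg(R)=R$ is not literally $\m$-primary; this edge case is implicit in the original statement and does not affect the argument.
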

%%%%%%%%%%%%%%%%%%%%%%%%%%%%%%%%%%%%%%%%%%%%%%%%%%%%%%%%%%%%
We denote by $\nil R$ the nilradical of $R$, and
set $\operatorname{n}(R) = \inf \{m\ge0\mid (\nil R)^{m}=0\}$.
We define $R_{\red} = R/\nil R$ as the associated reduced ring.
When $R$ is reduced, we denote by $\overline{R}$ the integral closure of $R$ in its total quotient ring $Q$, and denote by $C_{R} = (R:_{Q}\overline{R})$ the {\em conductor ideal} of $R$. 
See \cite[Section 31]{Matsumura} for the definition of a {\em Nagata ring}.
\begin{lem} \label{Nagata ring of dim1}
Let $R$ be a Nagata ring of dimension one.
Then one has
\begin{center}
$\Db(R)={\langle
\overline{R_{\red}} \oplus T/\rad T
\rangle}_{2\operatorname{n}(R)(\ell\ell(T)+1)}$,
\end{center}
where $\overline{R_{\red}}$ is the integral closure of  $R_{\red}$ in its total quotient ring, and $T$ is the quotient ring of $R_{\red}$ by its conductor ideal.
\end{lem}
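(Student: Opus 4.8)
The plan is to reach the claimed presentation by a two-stage dévissage of $\Db(R)$: first strip off the nilradical to reduce to the reduced ring $S:=R_{\red}$, and then use the conductor square of $S$ to split $\Db(S)$ into a part living over the normalization $\overline{S}=\overline{R_{\red}}$ and a part living over the Artinian quotient $T=S/C_S$.

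First I would handle the reduction to the reduced case. Put $N=\nil R$ and $n_0=\operatorname{n}(R)$, so $N^{n_0}=0$. For any $A\in\Db(R)$, termwise multiplication by powers of $N$ gives a finite filtration $A\supseteq NA\supseteq\cdots\supseteq N^{n_0}A=0$ of subcomplexes whose successive quotients $N^{i-1}A/N^{i}A$ are bounded complexes of finitely generated $R_{\red}$-modules, so $A$ is an iterated extension of $n_0$ objects lying in the essential image of $\Db(R_{\red})$; since restriction of scalars along $R\to R_{\red}$ is exact (and so sends short exact sequences of complexes to triangles), this gives $\Db(R)\subseteq\langle\Db(R_{\red})\rangle^{\Db(R)}_{n_0}$ by the defining properties of balls (\Cref{rmk of dim rad}). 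Using the standard identity $\langle\langle\X\rangle_a\rangle_b\subseteq\langle\X\rangle_{ab}$ (a routine induction from the associativity of $\diamond$, \Cref{rmk of dim rad}(3)) and the fact that balls of larger radius contain those of smaller radius (\Cref{rmk of dim rad}), it then suffices to prove
\[
\Db(S)=\big\langle\,\overline{S}\oplus T/\rad T\,\big\rangle^{\Db(S)}_{\ell\ell(T)+2},
\]
since $n_0(\ell\ell(T)+2)\le 2\operatorname{n}(R)(\ell\ell(T)+1)$.

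Now I would set up the structure of $S$ and perform the conductor dévissage. Being a reduced Nagata ring of dimension at most one, $S$ has module-finite normalization $\overline{S}$ (\cite[Section~31]{Matsumura}), and $\overline{S}$ is a finite product of fields and Dedekind domains, hence a regular ring of Krull dimension at most one; so $\Db(\overline{S})=\langle\overline{S}\rangle^{\Db(\overline{S})}_{2}$ by \Cref{trivial der dim}. The conductor $C:=C_S$ is an ideal of both $S$ and $\overline{S}$ (the key point being $C\overline{S}=C$), and $\V(C)=\supp_S(\overline{S}/S)$ is the non-normal locus of $S$; this avoids the minimal primes, so it is a finite set of maximal ideals and $T=S/C$ is Artinian, whence $\Db(T)=\langle T/\rad T\rangle^{\Db(T)}_{\ell\ell(T)}$ by \Cref{trivial der dim}. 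Given $A\in\Db(S)$, termwise multiplication by $C$ yields a short exact sequence of complexes $0\to CA\to A\to A/CA\to0$, hence a distinguished triangle $CA\to A\to A/CA\to(CA)[1]$ in $\Db(S)$; here $A/CA$ is a bounded complex of finitely generated $T$-modules, and, using $C\overline{S}=C$, each $CA_j$ is a finitely generated $\overline{S}$-module with $\overline{S}$-linear differentials, so $CA$ is the image of an object of $\Db(\overline{S})$. Since restriction of scalars along $S\to\overline{S}$ and along $S\to T$ are exact, feeding the two ball estimates into this triangle gives
\[
\Db(S)\subseteq\langle\overline{S}\rangle^{\Db(S)}_{2}\diamond\langle T/\rad T\rangle^{\Db(S)}_{\ell\ell(T)}\subseteq\big\langle\,\overline{S}\oplus T/\rad T\,\big\rangle^{\Db(S)}_{\ell\ell(T)+2},
\]
which is the reduction target; the reverse inclusion is trivial since $\overline{S},\,T/\rad T\in\mod S$.

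The step I expect to be the main obstacle is the conductor dévissage itself: one must verify carefully that the \emph{conductor part} $CA$ is genuinely a complex of finitely generated modules over the larger ring $\overline{S}$ (which is exactly where $C\overline{S}=C$, i.e.\ the fact that $C$ is an ideal of $\overline{S}$, is used), and that radii of balls are preserved under the exact restriction-of-scalars functors $\Db(\overline{S})\to\Db(S)$ and $\Db(T)\to\Db(S)$, for which \Cref{rmk of dim rad}(5) is the relevant tool. The rest is bookkeeping with the $\diamond$ / $\langle-\rangle_r$ calculus together with the two special cases (regular rings of dimension at most one, and Artinian rings) recorded in \Cref{trivial der dim}.
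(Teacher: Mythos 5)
Your reduction to the reduced case via the nilradical filtration is fine and is exactly how the paper proceeds (it quotes \cite[Lemma 7.35]{Rouquier} for this step). The genuine gap is in the conductor d\'evissage. For an arbitrary finitely generated $S$-module $M$, the submodule $CM\subseteq M$ is \emph{not} naturally an $\overline{S}$-module: the only candidate action, $\bar s\cdot(cm):=(\bar s c)m$, is ill-defined in general. Concretely, take $S=k[\![t^2,t^3]\!]$, so $\overline S=k[\![t]\!]$ and $C=(t^2,t^3)S=t^2\overline S$, and let $M=\cok\bigl(S\xrightarrow{(t^2,\,-t^3)}S^{2}\bigr)$ with generators $e_1,e_2$ and the single relation $t^2e_1=t^3e_2$. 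Acting by $t\in\overline S$ on the two presentations of this element of $CM$ would force $t^3e_1=t^4e_2$ in $M$, but $(t^3,-t^4)\notin S\cdot(t^2,-t^3)$ since that would require $t\in S$. So the sentence ``each $CA_j$ is a finitely generated $\overline S$-module with $\overline S$-linear differentials'' fails, and with it the inclusion of $CA$ into the image of $\Db(\overline S)$. The identity $C\overline S=C$ only rescues you when $M$ is torsion-free: then $M$ embeds into $Q\otimes_S M$, one can form the $\overline S$-module $\overline S M$ there, and $CM=C\cdot(\overline S M)$ is an honest $\overline S$-submodule sitting between $0$ and $M$ with quotient a $T$-module. (Note that the $M$ above is not torsion-free.)

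This is why the correct route — and the one taken in \cite[Proposition 3.12]{AiharaTakahashi2015}, which the paper's proof invokes after recording the Nagata-ring facts — first passes to syzygies/torsion-free modules to run the conductor argument in $\mod S$, obtaining $\radius_{\Db(S)}(\mod S)\le \ell\ell(T)$ with center $\overline S\oplus T/\rad T$, and only then climbs back up to all of $\Db(S)$ via the doubling inequality $\dim\Db(S)\le 2\,\radius_{\Db(S)}(\mod S)+1$ (\cite[Proposition 2.6]{AAITY}); a general bounded complex is not an extension of a uniformly bounded number of modules, so this last step cannot be skipped. The factor $2$ in the exponent $2\operatorname{n}(R)(\ell\ell(T)+1)$ is exactly the cost of that step. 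The fact that your argument lands on the strictly smaller radius $\ell\ell(T)+2$ in the reduced case (smaller than $2(\ell\ell(T)+1)$ as soon as $\ell\ell(T)\ge1$) is a symptom of the gap rather than an improvement. Everything else — the structure of $\overline{S}$ as a regular ring of dimension at most one, the Artinianness of $T$, the use of \Cref{trivial der dim}, and the $\diamond$-bookkeeping — is correct.
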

\begin{proof}
By using \cite[Lemma 7.35]{Rouquier}, we may assume that $R$ is reduced, and that $\operatorname{n}(R)=1$.

Here we state some basic properties of $R$, which is a reduced Nagata ring of dimension one (cf. \cite[Proposition 5.6(1)]{IyengarTakahashi2016}). 
Let $\{\p_{1}, \ldots, \p_{m}\}$ be the set of minimal prime ideals of $R$.
Since $R$ is reduced, the integral closure $\overline{R}$ of $R$ in its total quotient ring is isomorphic to a direct product of $\overline{R/\p_{i}}$ by \cite[Corollary 2.1.13]{HunekeSwanson}, where the integral closure is considered in its field of fractions $\kappa(\p_{i})$ for each $i=1, \ldots, m$.
On the other hand, $\overline{R/\p_{i}}$ is integrally closed domain of dimension at most one for all $i=1, \ldots, m$. Hence the global dimension of $\overline{R}$ is at most one.
In addition, since $R$ is a Nagata ring, $\overline{R/\p_{i}}$ is a finitely generated $R$-module for all $i=1, \ldots, m$. Hence $\overline{R}$ is also finitely generated as an $R$-module.
Thus by \cite[Exercise 2.11]{HunekeSwanson}, the conductor $C_{R}$ has a nonzero divisor of $R$, and that $T=R/C_{R}$ has finite length. 

The proof of this lemma in the case where $R$ is a complete noetherian local ring of dimension one is given in 
\cite[Proposition 3.12]{AiharaTakahashi2015}.
Combining its proof with the facts for a reduced Nagata ring of dimension one stated above, the assertion holds.
\end{proof}
%%%%%%%%%%%%%%%%%%%%%%%%%%%%%%%%%%%%%%%%%%%%%%%%%%%%%%%%%%%%
As a second step, we give an explicit generator and an upper bound for the dimension of the singularity category of $R$ in the case where the dimension of $\Sing R$ is one.

\begin{cor} \label{cor dimSing 1}
Let $I$ be an ideal of $R$ contained in $\ann_{R}\Dsg(R)$.
Suppose that $R/I$ is a quasi-excellent ring of finite Krull dimension.
Then the following hold.
\begin{enumerate}[\rm(1)]
\item 
The dimension of $\Dsg(R)$ is finite, and the ideal $\ann_{R}\Dsg(R)$ defines the singular locus of $R$.
\item
Assume that the dimension of $\Sing R$ is one
and $\V(I)=\Sing R$.
We set $S=R/I$ and 
denote by $T$ the quotient ring of $S_{\red}$ by its conductor ideal.
Then one has
\begin{center}
$\Dsg(R)= {\langle
\overline{S_{\red}}\oplus T/\rad T
\rangle}_{2\operatorname{n}(S)(\ell\ell(T)+1)
(\mu(I)-\grade I +1)}$.
\end{center}
In particular, we have
\begin{center}
$\dim \Dsg(R)
\le 2\operatorname{n}(S)(\ell\ell(T)+1)(\mu(I)-\grade I +1)-1$.
\end{center}
\end{enumerate}
\end{cor}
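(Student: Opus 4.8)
For part (1), since $R/I$ is quasi-excellent of finite Krull dimension, Aoki's theorem (\cite{Aoki}) gives $\dim\Db(R/I)<\infty$; as $I\subseteq\ann_{R}\Dsg(R)$, the inequality in \Cref{main thm} then forces
$\dim\Dsg(R)\le(\dim\Db(R/I)+1)(\mu(I)-\grade I+1)-1<\infty$.
With this finiteness in hand I would feed the subcategory $\X=\{R\}$ of $\Db(R)$ into \Cref{closedness}(1): since $\Db(R)/\thick\{R\}=\Dsg(R)$ has finite dimension, one obtains $\{\p\in\spec R\mid\Db(R_{\p})\neq\thick\{R_{\p}\}\}=\V(\ann_{R}\Dsg(R))$, and by \Cref{rmk of dim rad}(1) the left-hand set is exactly $\Sing R$, since $\thick_{\Db(R_{\p})}\{R_{\p}\}$ is the category of $R_{\p}$-complexes of finite projective dimension, which equals $\Db(R_{\p})$ precisely when $R_{\p}$ is regular. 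Hence $\ann_{R}\Dsg(R)$ defines $\Sing R$.

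For part (2), first observe that $\V(I)=\Sing R$ forces $\dim S=\dim(R/I)=\dim\Sing R=1$, and that $S=R/I$, being quasi-excellent, is a Nagata ring; thus \Cref{Nagata ring of dim1} applies to $S$ and gives $\Db(S)=\langle G\rangle_{m}^{\Db(S)}$ with $G=\overline{S_{\red}}\oplus T/\rad T$ and $m=2\operatorname{n}(S)(\ell\ell(T)+1)$, so in particular $\mod S\subseteq\langle G\rangle_{m}^{\Db(S)}$. On the other hand $\Sing R\subseteq\V(I)$ and $I\subseteq\ann_{R}\Dsg(R)$, so \Cref{main thm} gives $\Dsg(R)=\langle\mod R/I\rangle_{r}^{\Dsg(R)}$ with $r=\mu(I)-\grade I+1$, where $\mod R/I$ is identified with the essential image of $\mod S$ under the natural triangle functor $F\colon\Db(S)\to\Dsg(R)$; note that $F$ carries each finitely generated $S$-module, and in particular $G$, to the corresponding object of $\Dsg(R)$. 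Applying \Cref{rmk of dim rad}(5) to $F$ pushes the inclusion $\mod S\subseteq\langle G\rangle_{m}^{\Db(S)}$ forward to $\mod R/I\subseteq\langle G\rangle_{m}^{\Dsg(R)}$ inside $\Dsg(R)$; feeding this into $\Dsg(R)=\langle\mod R/I\rangle_{r}^{\Dsg(R)}$ and using the composition rule for balls $\langle\langle G\rangle_{m}\rangle_{r}\subseteq\langle G\rangle_{mr}$ (a formal consequence of \Cref{rmk of dim rad}(3)) yields $\Dsg(R)=\langle G\rangle_{mr}^{\Dsg(R)}$ with $mr=2\operatorname{n}(S)(\ell\ell(T)+1)(\mu(I)-\grade I+1)$, which is the asserted equality; the displayed inequality for $\dim\Dsg(R)$ then follows from the definition of the radius (\Cref{def of dim}(6)).

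Every ingredient is already in hand, so this is mostly bookkeeping and I do not anticipate a genuine obstacle. The two points that still need attention are: (a) recording that a quasi-excellent ring (here $S=R/I$) is a Nagata ring of the same Krull dimension, which is what makes \Cref{Nagata ring of dim1} applicable to $S$; and (b) propagating the radius estimates cleanly through the chain $\Db(S)\to\Db(R)\to\Dsg(R)$, using that triangle functors never increase radii and that, under the identification in \Cref{main thm}, the explicit generator $G$ of $\Db(S)$ is transported unchanged into $\Dsg(R)$.
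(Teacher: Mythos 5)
Your proposal is correct and follows essentially the same route as the paper: Aoki's theorem plus the inequality from \Cref{main thm} for finiteness, the closedness/annihilator result (the paper cites Liu's theorem directly, which your \Cref{closedness}(1) with $\X=\{R\}$ recovers) for the singular locus, and for (2) the combination of \Cref{Nagata ring of dim1} applied to the Nagata ring $S$ with \Cref{main thm} via composition of balls. The bookkeeping you spell out (pushing $\mod S\subseteq\langle G\rangle_m^{\Db(S)}$ into $\Dsg(R)$ and composing radii) is exactly what the paper's "combining with this and \Cref{main thm}" compresses.
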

\begin{proof}
(1) Since $R/I$ is a quasi-excellent ring of finite Krull dimension, the dimension of $\Db(R/I)$ is finite by \cite[Main Theorem]{Aoki}.
Hence by \Cref{fin of dim}, the dimension of the singularity category of $R$ is also finite.
Thus the ideal $\ann_{R}\Dsg(R)$ defines the singular locus of $R$ by \cite[Theorem 4.6]{Liu}.

(2) Note that every quasi-excellent ring is a Nagata ring; see \cite[(33.H)]{Matsumura} for instance.
By the hypothesis, $S$ is a Nagata ring of dimension one.
Hence by \Cref{Nagata ring of dim1}, one has
\begin{center}
$\Db(S)={\langle
\overline{S_{\red}} \oplus T/\rad T
\rangle}_{2\operatorname{n}(S)(\ell\ell(T)+1)}$.
\end{center}
Therefore combining with this and \Cref{main thm}, the assertion holds.
\end{proof}
%%%%%%%%%%%%%%%%%%%%%%%%%%%%%%%%%%%%%%%%%%%%%%%%%%%%%%%%%%%%
We give a simple example of the application of \Cref{cor dimSing 1}.
\begin{eg} \label{eg dimSing 1}
Let $k$ be a perfect field (so that we use the Jacobian criterion later).
We consider
\begin{center}
$R = k[X, Y, Z, W]/(XY, YZ, ZX)$ \quad
(or $k[\![X, Y, Z, W]\!]/(XY, YZ, ZX)$).
\end{center}
Then we have a prime decomposition 
$(XY, YZ, ZX) = (X,Y)\cap(Y,Z)\cap(Z,X)$.
Hence $R$ is a two-dimensional reduced excellent Cohen--Macaulay ring.
Thus $R$ satisfies the condition ($\mathsf{R}_{0}$), that is, $R_{\p}$ is a regular local ring for all minimal prime ideals $\p$ of $R$.
This implies that the dimension of the singular locus of $R$ is at most one.
Moreover, the prime ideal $\p =(x, y, z)$ of $R$ has height one and belongs to $\Sing R$. 
Therefore the dimension of $\Sing R$ is one.

Since $R$ is equidimensional and $k$ is perfect, the Jacobian ideal $\jac(R)$ defines the singular locus of $R$. 
Indeed, \cite[Corollary 16.20]{Eisenbud} yields the affine case and the local case follows from \cite[Theorem 15.17]{LW}.
Moreover, $\jac(R)$ is contained in $\ann_{R}\Dsg(R)$ by \Cref{ideals in annihilator}(3).
Therefore, we can take $\jac(R)$ as the ideal $I$ in \Cref{cor dimSing 1}.

Next, we consider the ring $S=R/\jac(R)$.
Since the height of the ideal 
$(XY, YZ, ZX)$ is two, we can easily compute that 
$\jac(R) = (x^{2}, y^{2}, z^{2})$.
Note that $\jac(R)$ is $\p$-primary and the equalities
$\mu(\jac(R))=3$, 
$\grade(\jac(R)) = \height(\jac(R)) = \height \p = 1$
hold.
Hence we have an isomorphism
\begin{center}
$S\cong k[X, Y, Z, W]/(X^{2}, Y^{2}, Z^{2}, XY, YZ, ZX)$ (or 
$k[\![X, Y, Z, W]\!]/(X^{2}, Y^{2}, Z^{2}, XY, YZ, ZX)$),
\end{center}
and $\nil S = (x, y, z)S$ is the unique minimal prime ideal of $S$.
Note that one has
$\operatorname{n}(S)=2$.
On the other hand, the associated reduced ring $S_{\red}$ is isomorphic to 
$R/\p \cong$
$k[W]$ (or $k[\![W]\!]$).
This is an integrally closed domain, and thus we have
$\overline{S_{\red}}=S_{\red}$, 
$C_{S_{\red}}=S_{\red}$, and
$T=S_{\red}/C_{S_{\red}}=0$.
Therefore by \Cref{cor dimSing 1}, one has
\begin{center}
$\Dsg(R)=
{\langle S_{\red}
\rangle}_{2\cdot 2(0+1)(3-1+1)}
= {\langle R/\p
\rangle}_{12}$.
\end{center}
In particular, we obtain an inequality
$\dim \Dsg(R) \le 11$.
\end{eg} 
%%%%%%%%%%%%%%%%%%%%%%%%%%%%%%%%%%%%%%%%%%%%%%%%%%%%%%%%%%%%
\begin{rmk} \label{rmk for eg1 of dimSing 1}
\begin{enumerate}[\rm(1)]
\item 
In \Cref{eg dimSing 1}, $R$ is not a Gorenstein ring.
Indeed, the element $r=z-(x+y) \in R$ is not a zero divisor of $R$, and one has an isomorphism 
$R/(r) \cong k[X,Y]/(X^{2},XY,Y^{2})$
(or $k[\![X,Y]\!]/(X^{2},XY,Y^{2})$).
This is an artinian local ring with type two.
Hence $R$ is not a Gorenstein ring.
\item
In \Cref{eg dimSing 1}, we can also compute an upper bound for the dimension of $\Db(S)$ as follows.
Let $P=(x, y, z)S$ be the unique minimal prime ideal of $S$.
For all objects $A$ in $\Db(S)$, there exists an exact triangle
$\p A \to A \to A/\p A \to \Sigma(\p A)$ in $\Db(S)$.
Combining this with the fact that $P^{2}=0$, we have
$\Db(S) = {\langle
\Db(S/P) \rangle}_{2}
={\langle R/\p \rangle}_{4}$.
Hence by \Cref{for calculation}, we obtain the same conclusion 
$\Dsg(R)= {\langle R/\p
\rangle}_{12}$ 
as in \Cref{eg dimSing 1}.
\end{enumerate}
\end{rmk}
%%%%%%%%%%%%%%%%%%%%%%%%%%%%%%%%%%%%%%%%%%%%%%%%%%%%%%%%%%%%
Recall that a local ring $R$ has {\em countable CM type} if the set of isomorphism classes of indecomposable maximal Cohen--Macaulay $R$-module is countable.
We apply \Cref{cor dimSing 1} to compute an upper bound for the dimension of the singularity category of a local hypersurface ring of uncountable CM type.
\begin{eg}\label{uncountable CM}
We consider
$R=\mathbb{C}[\![X, Y, Z]\!]/(X^{3}-Y^{4})$.
This is a two-dimensional complete hypersurface domain with the coefficient field $\mathbb{C}$ and has multiplicity three by \cite[Exercise 4.6.12]{BH}.
As we will see later, the singular locus of $R$ has dimension one.
However, $R$ is not countable CM type.
Indeed, assume that $R$ is countable CM type. 
Then $R$ is isomorphic to either 
\begin{center}
$\mathbb{C}[\![z_{0}, z_{1}, z_{2}]\!]/(z_{1}^{2}+z_{2}^{2})$
or
$\mathbb{C}[\![z_{0}, z_{1}, z_{2}]\!]/(z_{0}z_{1}^{2}+z_{2}^{2})$
\end{center}
by \cite[Theorem B]{BuchweitzGrerelSchreyer} or \cite[Theorem 14.16]{LW}.
In both cases, $R$ has multiplicity two by \cite[Exercise 4.6.12]{BH}.
This is a contradiction.
Thus $R$ is uncountable CM type.

Now, we shall compute an upper bound for the dimension of the singularity category of $R$ by using \Cref{cor dimSing 1}.
We can easily compute that 
$\jac(R)=(x^{2}, y^{3})$.
Note that the equalities
$\grade(\jac(R))=\height(\jac(R))=\height(\sqrt{\jac(R)})=\height(x,y)=1$, 
and
$\mu(\jac(R))=2$ hold.
By the Jacobian criterion \cite[Theorem 15.17]{LW}, we have
$\V(\jac(R))=\Sing R$.
Since the dimension of $R/\jac(R)$ is one, 
the dimension of the singular locus of $R$ is also one.
We set $S=R/\jac(R)$.
Then one has
$S_{\red} \cong R/(x, y) \cong \mathbb{C}[\![Z]\!]$,
and $(x,y)S$ is the unique minimal prime ideal of $S$.
Since one has
$(x,y)^{3}S \neq 0, (x,y)^{4}S = 0$ in $S$, 
we have 
$\operatorname{n}(S)=4$.
Hence by \Cref{ideals in annihilator}(3) and \Cref{cor dimSing 1}(2), we obtain 
\begin{center}
$\Dsg(R)={\langle
R/(x, y) \rangle}_{2\cdot4(0+1)(2-1+1)}
= {\langle R/(x, y) \rangle}_{16}$.
\end{center}
In particular, we have an inequality
\begin{center}
$\dim \Dsg(R) \le 15$.  
\end{center}
\end{eg}
%%%%%%%%%%%%%%%%%%%%%%%%%%%%%%%%%%%%%%%%%%%%%%%%%%%%%%%%%%%%
We end this section by applying \Cref{cor dimSing 1} to a Cohen--Macaulay local ring $R$ with countable CM type. 
Under mild assumptions, the dimension of the singular locus of $R$ is at most one.
\begin{cor} \label{contable CM}
Let $(R,\m,k)$ be a Cohen--Macaulay local ring of countable CM type.
Let $I$ be an ideal of $R$ contained in $\ann_{R}\Dsg(R)$.
Assume that $R$ is complete, or $k$ is uncountable and $R/I$ is quasi-excellent.
Then the following hold.
\begin{enumerate}[\rm(1)]
\item
The ideal $\ann_{R}\Dsg(R)$ defines the singular locus of $R$.
\item
Suppose that the ideal $I$ defines the singular locus of $R$.
Then the inequality
\begin{align*}
\dim \Dsg(R) \le \min \lbrace
&(\ell\ell(R/I)+1)(\mu(I)-\dim R+1)-1,  \\
&2\operatorname{n}(S)(\ell\ell(T)+1)(\mu(I)-\grade I+1)-1 \rbrace
\end{align*}
holds, where $S=R/I$, and $T$ is the quotient ring of $S_{\red}$ by its conductor ideal.
\end{enumerate}
\begin{proof}
(1) The assertion follows from \Cref{cor dimSing 1}(1).

(2) Since $R$ is countable CM type and either $R$ is complete or $k$ is uncountable, the dimension of the singular locus of $R$ is at most one by \cite[Theorem 2.4]{Takahashi2007}.
When the ideal $I$ coincides with $R$, the singular locus of $R$ is empty, and that $\Dsg(R)$ is trivial. Thus we may assume that $I$ is a proper ideal.

If the dimension of $\Sing R$ is zero, then we have the following inequality by \cite[Lemma 6.5(2)]{Liu}.
\begin{center}
$\dim \Dsg(R) \le (\ell\ell(R/I)+1)(\mu(I)-\dim R+1)-1$.
\end{center}
In addition, since $I$ is an $\m$-primary ideal of $R$, 
we have an inequality
\begin{center}
$(\ell\ell(R/I)+1)(\mu(I)-\dim R+1)-1 
\le
2\operatorname{n}(S)(\ell\ell(T)+1)(\mu(I)-\grade I+1)-1$.
\end{center}
Note that 
$\operatorname{n}(S)=\ell\ell(R/I) \ge 1$, 
$\ell\ell(T)=0$,
and $\dim R=\grade I$ hold.
Hence we get the inequality as desired.

On the other hand, if the dimension of $\Sing R$ is one, then we obtain the following inequality by \Cref{cor dimSing 1}(2).
\begin{center}
$\dim \Dsg(R) \le 2\operatorname{n}(S)(\ell\ell(T)+1)(\mu(I)-\grade I+1)-1$.
\end{center}
Moreover, since the dimension of $R/I$ is one, its Loewy length is infinity.
Thus the following hold.
\begin{center}
$ \infty = (\ell\ell(R/I)+1)(\mu(I)-\dim R+1)-1 
>
2\operatorname{n}(S)(\ell\ell(T)+1)(\mu(I)-\grade I+1)-1$.
\end{center}
Hence, we arrived at the conclusion.
\end{proof}
\end{cor}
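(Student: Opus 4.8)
The strategy is to deduce both parts from the results already established for rings whose singular locus has dimension at most one, together with Takahashi's bound on $\dim\Sing R$ for Cohen--Macaulay local rings of countable CM type. For part (1), I would first observe that under either hypothesis $R/I$ is a quasi-excellent ring of finite Krull dimension: if $R$ is complete then so is $R/I$, hence it is excellent, and being noetherian local it has finite Krull dimension; if instead $k$ is uncountable and $R/I$ is quasi-excellent, then $R/I$ is again noetherian local, so of finite Krull dimension. Since $I\subseteq\ann_R\Dsg(R)$ by hypothesis, \Cref{cor dimSing 1}(1) then applies directly and yields at once that $\dim\Dsg(R)$ is finite and that $\ann_R\Dsg(R)$ defines $\Sing R$.

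For part (2), the first step is to invoke \cite[Theorem 2.4]{Takahashi2007}: as $R$ is a Cohen--Macaulay local ring of countable CM type which is complete or has uncountable residue field, one gets $\dim\Sing R\le 1$. The case $I=R$ is trivial, since then $\Sing R=\emptyset$ and $\Dsg(R)=0$; so I may assume $I$ is proper, whence $\V(I)=\Sing R$ is nonempty and $R$ is singular. The proof then splits according to whether $\dim\Sing R$ is $0$ or $1$.

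If $\dim\Sing R=0$, then $\Sing R=\{\m\}$, so $I$ is $\m$-primary, and \Cref{cor of dimSing 0} (equivalently \cite[Lemma 6.5(2)]{Liu}) gives $\dim\Dsg(R)\le(\ell\ell(R/I)+1)(\mu(I)-\dim R+1)-1$, the first quantity in the minimum. To see this is bounded above by the second, note that $S=R/I$ is artinian local, hence $S_{\red}=k$, the conductor ideal of $S_{\red}$ is all of $k$, so $T=0$ and $\ell\ell(T)=0$; moreover $\operatorname{n}(S)=\ell\ell(R/I)\ge 1$ and $\grade I=\depth R=\dim R$ since $R$ is Cohen--Macaulay. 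Thus the second quantity equals $2\ell\ell(R/I)(\mu(I)-\dim R+1)-1$, and $\ell\ell(R/I)+1\le 2\ell\ell(R/I)$ closes this case. If instead $\dim\Sing R=1$, then \Cref{cor dimSing 1}(2) applies verbatim and gives $\dim\Dsg(R)\le 2\operatorname{n}(S)(\ell\ell(T)+1)(\mu(I)-\grade I+1)-1$, the second quantity; and since $\dim(R/I)=\dim\V(I)=\dim\Sing R=1>0$, the ring $R/I$ is not artinian, so $\ell\ell(R/I)=\infty$ and the first quantity is infinite, whence the minimum is the second.

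The genuine inputs are thus \Cref{cor dimSing 1}, \Cref{cor of dimSing 0} (or \cite[Lemma 6.5(2)]{Liu}), and \cite[Theorem 2.4]{Takahashi2007}; everything else is numerical bookkeeping. I expect the only real---and minor---obstacle to be the zero-dimensional case: one must carefully verify that $S_{\red}=k$, $T=0$, $\operatorname{n}(S)=\ell\ell(R/I)$ and $\grade I=\dim R$ so that the first bound really is dominated by the second, while keeping track of the degenerate situations $I=R$ and $\ell\ell(R/I)=\infty$.
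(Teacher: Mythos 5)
Your proposal is correct and follows essentially the same route as the paper's proof: part (1) via \Cref{cor dimSing 1}(1), and part (2) via Takahashi's bound $\dim\Sing R\le 1$, the trivial case $I=R$, and the case split between \cite[Lemma 6.5(2)]{Liu} for $\dim\Sing R=0$ and \Cref{cor dimSing 1}(2) for $\dim\Sing R=1$, with the same numerical comparisons ($\operatorname{n}(S)=\ell\ell(R/I)$, $\ell\ell(T)=0$, $\grade I=\dim R$, and $\ell\ell(R/I)=\infty$ in the one-dimensional case). Your extra details (completeness implying excellence in part (1), and the explicit inequality $\ell\ell(R/I)+1\le 2\ell\ell(R/I)$) only make explicit what the paper leaves implicit.
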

%%%%%%%%%%%%%%%%%%%%%%%%%%%%%%%%%%%%%%%%%%%%%%%%%%%%%%%%%%%%

%%%%%%%%%%%%%%%%%%%%%%%%%%%%%%%%%%%%%%%%%%%%%%%%%%%%%%%%%%%%
%%%%%%%%%%%%%%%%%%%%%%%%%%%%%%%%%%%%%%%%%%%%%%%%%%%%%%%%%%%%
\begin{ac} Souvik Dey was partially supported by the
Charles University Research Center program No.UNCE/SCI/022 and a grant GA CR 23-05148S from the Czech Science Foundation.
Yuki Mifune would like to thank his supervisor Ryo Takahashi for giving many thoughtful questions and helpful discussions. He also thanks Yuya Otake and Kaito Kimura for their valuable comments. 
\end{ac}

\bibliographystyle{plain}
\bibliography{mainbib} 

\end{document}